\documentclass[11pt]{amsart}
\usepackage{amssymb}
\usepackage[mathscr]{euscript}

\newcommand{\mbf}{\mathbf}                    
\newcommand{\msc}{\mathscr}

\usepackage{graphicx}

\newtheorem{thm}{Theorem} 

\newtheorem{lm}[thm]{Lemma}
\newcommand{\la}{\langle}
\newcommand{\ra}{\rangle}
\newcommand{\op}{\operatorname}
\newcommand{\join}{\vee}
\newcommand{\meet}{\wedge}
\renewcommand{\Join}{\bigvee}
\newcommand{\Meet}{\bigwedge}

\newcommand{\id}{\downarrow\!}
\newcommand{\fil}{\uparrow\!}
\newcommand{\vare}{\varepsilon}
\newcommand{\ol}{\mathbf}
\newcommand{\ifff}{\Longleftrightarrow}
\newcommand{\ssm}{\smallsmile}

\newcommand{\A}{\mathbf A}
\newcommand{\B}{\mathbf B}
\newcommand{\C}{\mathbf C}
\newcommand{\E}{\mathscr E}
\newcommand{\F}{\mathbf F}
\renewcommand{\H}{\mathscr H}

\renewcommand{\L}{\mathbf L}
\newcommand{\M}{\mathbf M}
\renewcommand{\S}{\mathbf S}
\newcommand{\T}{\mathbf T}
\newcommand{\V}{\mathscr V}


\date{\today}
\keywords{quasivariety, quasi-equational theory, congruence lattice, 
semilattice}
\subjclass[2010]{08C15, 08A30, 06A12}

\begin{document}

\title[Q-lattices as congruence lattices]%
{Lattices of Quasi-equational Theories as Congruence Lattices of 
Semilattices with Operators, Part I}

\author {Kira Adaricheva}
\address{Department of Mathematical Sciences, Yeshiva University,
New York, NY 10016, USA}
\email{adariche@yu.edu}

\author {J. B. Nation}
\address{Department of Mathematics, University of Hawaii, Honolulu, HI
96822, USA}
\email{jb@math.hawaii.edu}

\thanks{The authors were supported in part by a grant from the U.S.~
Civilian Research \& Development Foundation.  The first author was also
supported in part by INTAS Grant N03-51-4110.}

\begin{abstract}
We show that for every quasivariety $\msc K$ of structures 
(where both functions and relations are allowed)
there is a semilattice $\S$ with operators such that the lattice of
quasi-equational theories of $\msc K$ (the dual of the lattice of
sub-quasivarieties of $\msc K$) is isomorphic to $\op{Con}(\S,+,0,\msc F)$.
As a consequence, new restrictions on the natural quasi-interior operator
on lattices of quasi-equational theories are found.
\end{abstract}

\maketitle

\section{Motivation and terminology}

Our objective is to provide, for the lattice of quasivarieties contained
in a given quasivariety (\emph{$Q$-lattices} in short), a description similar 
to the one that characterizes the lattice of subvarieties of a given variety 
as the dual of the lattice of fully invariant congruences on a countably 
generated free algebra.  Just as the result for varieties is more naturally 
expressed in terms of the lattice of equational theories, rather than the 
dual lattice of varieties, so it will be more natural to consider lattices 
of quasi-equational theories rather than lattices of quasivarieties.

The basic result is that the lattice of quasi-equational theories extending
a given quasi-equational theory is isomorphic to the congruence lattice
of a semilattice with operators preserving join and $0$.  These lattices
support a natural quasi-interior operator, the properties of which lead 
to new restrictions on lattices of quasi-equational theories.

This is the first paper in a series of four.  Part~II shows that if $\S$
is a semilattice with both $0$ and $1$, and $\msc G$ is a \emph{group} of
operators on $\S$ such that each operator in $\msc G$ fixes both $0$ and $1$,
then there is a quasi-equational theory $\msc T$ such
that $\op{Con}(\S,+,0,\msc G)$ is isomorphic to the lattice of quasi-equational
theories extending $\msc T$.  The third part~\cite{JBN08} shows that if
$\S$ is any semilattice with operators, then $\op{Con} \S$ is isomorphic to
the lattice of implicational theories extending some given implicational
theory, but in a language that may not include equality.
The fourth paper~\cite{HKNT}, with T.~Holmes, D.~Kitsuwa and S.~Tamagawa, 
concerns the structure of lattices of atomic theories in a language without 
equality.

The setting for varieties is traditionally \emph{algebras}, i.e., sets with
operations, whereas work on quasivarieties normally allows 
\emph{structures}, i.e., sets with operations and relations.  
Some adjustments are required for the more general setting.
Let us review the universal algebra of structures, following
Section 1.4 of Gorbunov \cite{VG2}; 
see also Gorbunov and Tumanov \cite{GT1, GT2} and Gorbunov \cite{VG3}.

The \emph{type} of a structure is determined by its \emph{signature}
$\sigma = \la \msc F, \msc R, \rho \ra$ where
$\msc F$ is a set of function symbols,
$\msc R$ is a set of relation symbols,
and $\rho:\msc F \cup \msc R \to \omega$ assigns arity.
A \emph{structure} is then $\A = \la A,\msc F^{\A},\msc R^{\A} \ra$
where $A$ is the carrier set, 
$\msc F^{\A}$ is the set of operations on $A$,
and $\msc R^{\A}$ is the set of relations on $A$.

For structures $\A$ and $\B$ of the same type,
a map $h: \A \to \B$ is a \emph{homomorphism} if it preserves
operations and $h(R^{\A}) \subseteq R^{\B}$ for each relation
symbol $R$.  An \emph{endomorphism} of $\A$ is a homomorphism
$\vare: \A \to \A$.   

The \emph{kernel} $\ker h$ of a homomorphism $h$
is a pair $\kappa = \la \kappa_0, \kappa_1 \ra$ where
\begin{itemize}
\item $\kappa_0$ is the equivalence relation on $A$ induced by $h$,
i.e., $(x,y) \in \kappa_0$ iff $h(x)=h(y)$, 
\item $\kappa_1 = \bigcup_{R \in \msc R} \kappa_1^R$ where
$\kappa_1^R = h^{-1}(R^{\B}) = 
\{ \ol s \in A^{\rho(R)}: h(\ol s) \in R^{\B} \}$.
\end{itemize}
Equality is treated differently because, in standard logic,
equality is assumed to be a congruence relation.  
Indeed, the statements that $\approx$ is reflexive, symmetric, transitive,
and compatible with the functions of $\msc F$ and the relations of $\msc R$,
are universal Horn sentences.  Thus in normal quasi-equational logic we are
working in the quasivariety given by these laws.
This is not necessary:  see Parts III and IV~\cite{JBN08, HKNT}.

A \emph{congruence} on a structure $\mbf A = \la A, \msc F^{\mbf A},
\msc R^{\mbf A} \ra$ is a pair $\theta = \la \theta_0, \theta_1 \ra$ where
\begin{itemize}
\item $\theta_0$ is an equivalence relation on $A$ that is compatible with
the operations of $\msc F^{\mbf A}$, and
\item $\theta_1 = \bigcup_{R \in \msc R} \theta_1^R$ where each
$\theta_1^R \subseteq  A^{\rho(R)}$ and
$R^{\mbf A} \subseteq \theta_1^R$, i.e., the original relations
of $\mbf A$ are contained in those of $\theta_1$, and for each
$R \in \msc R$, if $\ol a \in \theta_1^R$ and $\ol b \in A^{\rho(R)}$ and
$\ol a \,\theta_0\, \ol b$ componentwise, then $\ol b \in \theta_1^R$.
\end{itemize}
Note that if $h:\A \to \B$ is a homomorphism, then $\ker h$ is a
congruence on $\A$.
The collection of all congruences on $\A$ forms an algebraic lattice 
$\op{Con}\,\A$ under set containment.

A subset $S \subseteq A$ is a \emph{subuniverse} if it is closed
under the operations of $\A$.  A \emph{substructure} of $\A$ is
$\S = \la S, \msc F^{\S}, \msc R^{\S} \ra$ where $S$ is a subuniverse
of $A$, 
for each operation symbol $f \in \msc F$ the operation $f^{\S}$ is
the restriction of $f^{\A}$ to $S^{\rho(f)}$, and
for each relation symbol $R \in \msc F$ the relation $R^{\S}$ is
$R^{\A} \cap S^{\rho(R)}$. 

Given a congruence $\theta$ on a structure $\A$, we can form a
\emph{quotient structure} $\A/\theta$ by defining operations
and relations on the $\theta_0$-classes of $A$ in the natural way.
The isomorphism theorems carry over to this more general setting.
In particular, if $h:\A \to \B$ is a homomorphism, then $h(\A)$ is
a substructure of $\B$, and $h(\B)$ is isomorphic to $\A/\ker h$.

A congruence is \emph{fully invariant} if, for every endomorphism 
$\vare$ of $\A$,  
\begin{itemize}
\item $a \,\theta_0\, b$ implies $\vare(a) \,\theta_0\, \vare(b)$, and
\item for each $R \in \msc R$, 
$\ol a \in \theta_1^R$ implies $\vare (\ol a) \in \theta_1^R$.
\end{itemize}
The lattice of fully invariant congruences is denoted $\op{Ficon}\,\A$.

The congruence generation theorems are straightforward to generalize.
Let $C \subseteq A^2$ and let $D$ be a set of formulae of the form
$R(\mbf a)$ with $R \in \msc R$ and $\mbf a \in A^{\rho(R)}$.
The congruence generated by $C \cup D$, denoted $\op{con}(C \cup D)$, 
is the least congruence $\theta = \la \theta_0,\theta_1 \ra$ such that
$C \subseteq \theta_0$  and $\mbf a \in \theta_1^R$ for all 
$R(\mbf a) \in D$.  
The equivalence relation $\theta_0$ is given by the usual Mal'cev 
construction 
applied to $C$, and $\theta_1$ is the closure of $D \cup \msc R^{\A}$ 
with respect to $\theta_0$, i.e., if $R(\mbf a) \in D$ and 
$\mbf a \,\theta_0\, \mbf b$ componentwise, then $\mbf b \in \theta_1^R$.

A \emph{variety} is a class closed under homomorphic images, substructures
and direct products.  Varieties are determined by laws of the form 
$s \approx t$ and $R(\ol s)$ where $s$, $t$ and the components of 
$\ol s$ are terms.  That is, a variety is the class of all similar 
structures satisfying a collection of atomic formulae.  If $\msc V$ is a
variety of structures and $\mbf F$ is the countably generated
free structure for $\msc V$, then the lattice $\op{L}_v(\msc V)$ of
subvarieties of $\msc V$ is dually isomorphic to the lattice of fully
invariant congruences of $\mbf F$, i.e., $\op{L}_v(\msc V) \cong^d
\op{Ficon}\,\mbf F$.  In the case of varieties of algebras (with no
relational symbols in the language), this is equivalent to adding the
endomorphisms of $\mbf F$ to its operations and taking the usual congruence
lattice, so that
$\op{L}_v(\msc V) \cong^d \op{Con}\, (F,\msc F \cup \op{End}\,\mbf F)$.
For structures in general, this simplification does not work.
(These standard results are based on Birkhoff \cite{GB}.)

A \emph{quasivariety} is a class of structures closed under
substructures, direct products and ultraproducts (equivalently, substructures
and reduced products).  Quasivarieties are
determined by laws that are \emph{quasi-identities}, i.e., Horn sentences
\[ \&_{1 \leq i \leq n} \alpha_i \implies \beta \]
where the $\alpha_i$ and $\beta$ are atomic formulae of the form
$s \approx t$ and/or $R(\ol s)$.

If $\msc K$ is a quasivariety and $\A$ a structure, then a congruence
$\theta$ on $\A$ is said to be a {$\msc K$-congruence} if 
$\A/\theta \in \msc K$.  Since the largest congruence is a $\msc K$-congruence,
and $\msc K$-congruences are closed under intersection, the set of
$\msc K$-congruences on $\A$ forms a complete meet subsemilattice of
$\op{Con}\,\A$, denoted $\op{Con}_{\msc K}\,\A$.  
Moreover, $\op{Con}_{\msc K}\,\A$ is itself an algebraic lattice. 

Let us adopt some notation to reflect the standard duality between
theories and models.  For a variety $\msc V$, let $\op{ATh}(\msc V)$
denote the lattice of ``equational" (really, atomic) theories extending
the theory of $\msc V$, so that $\op{ATh}(\msc V) \cong^d \op{L}_v(\msc V)$.
Likewise, for a quasivariety $\msc K$, let $\op{QTh}(\msc K)$
denote the lattice of quasi-equational theories containing
the theory of $\msc K$, so that $\op{QTh}(\msc K) \cong^d \op{L}_q(\msc K)$.

Gorbunov and Tumanov described the lattice $\L_q(\msc K)$ of quasivarieties
contained in a given quasivariety $\msc K$ in terms of algebraic subsets.
This description requires some definitions.
\begin{itemize}
\item Given $\msc K$, let $\mbf F = \F_{\msc K}(\omega)$ be the countably
generated $\msc K$-free structure.
Then $\op{Con}_{\msc K}\,\F$ denotes the lattice of all
$\msc K$-congruences of $\mbf F$.
\item Define the isomorphism relation $I$ and embedding relation $E$ on
$\op{Con}_{\msc K}\,\F$ by
\begin{align*}
\varphi \,I\, \psi \quad&\text{if}\quad \F/\psi \cong \F/\varphi \\
\varphi \,E\, \psi \quad&\text{if}\quad \F/\psi \leq \F/\varphi .
\end{align*}
\item For a binary relation $R$ on a complete lattice $\mbf L$, let
$\op{Sp}(\L,R)$ denote the lattice of all $R$-closed algebraic subsets
of $\L$.  (Recall that $S \subseteq L$ is \emph{algebraic} if it is closed
under arbitrary meets and nonempty directed joins.  The set $S$ is 
\emph{$R$-closed} if $s \in S$ and $s \,R\, t$ implies $t \in S$.)
\end{itemize}
The characterization theorem of Gorbunov and Tumanov \cite{GT2} then says that
\[ \op{L}_q(\msc K) \cong \op{Sp}(\op{Con}_{\msc K} \, \F,I)
 \cong \op{Sp}(\op{Con}_{\msc K} \, \F,E) .\]
See Section~5.2 of Gorbunov~\cite{VG2}; also cf.~Hoehnke \cite{HJH}. 

By way of comparison, we might say that the description of the lattice
of subvarieties by $\op{L}_v(\msc V) \cong^d \op{Ficon}\,\F$ reflects
equational logic, whereas the representation 
$\op{L}_q(\msc K) \cong \op{Sp}(\op{Con}_{\msc K} \, \mbf F,E)$ say reflects
structural properties (closure under $\op{S}$, $\op{P}$ and direct limits).
We would like to find an analogue of the former for quasivarieties,
ideally something of the form $\op{L}_q(\msc K) \cong^d \op{Con}\,\S$
for some semilattice $\S$ with operators, reflecting quasi-equational logic.
This is done below.
Indeed, while our emphasis is on the structure of $Q$-lattices,
Bob Quackenbush has used the same general ideas to provide a nice algebraic
proof of the completeness theorem for quasi-equational logic~\cite{RWQ}.

The lattice $\op{QTh}(\msc K)$ of theories of a quasivariety is 
algebraic and (completely) meet semidistributive.  
Most of the other known properties of these lattices can be described 
in terms of the natural equa-interior operator, which is the dual of 
an equational closure operator on $\op{QTh}(\msc K)$.  See Appendix~II
or Section~5.3 of Gorbunov~\cite{VG2}.

A.M. Nurakunov~\cite{AMN}, building on earlier work of R.~McKenzie~\cite{RMcK}
and R.~Newrly ~\cite{NN},
has recently provided a nice algebraic description of the lattices
$\op{ATh}(\msc V)$, where $\msc V$ is a variety of algebras,
as congruence lattices of monoids with two additional unary operations
satisfying certain properties.  See Appendix~III.

Finally, let us note two (related) major differences between quasivarieties
of structures \emph{versus} algebras.  Firstly, the greatest quasi-equational 
theory in $\op{QTh}(\msc K)$ need not be
compact if the language of $\msc K$ has infinitely many relations.
Secondly, many nice representation theorems for quasivarieties use
one-element structures, whereas one-element algebras are trivial.
Indeed, in light of Theorem~\ref{consl} below, Theorem~5.2.8 of 
Gorbunov~\cite{VG2} (from Gorbunov and Tumanov~\cite{GT0}) can be
stated as follows.

\begin{thm} \label{one-elt}
The following are equivalent for an algebraic lattice $\L$.
\begin{enumerate}
\item $\L \cong \op{Con}(\S,+,0)$ for some semilattice $\S$.
\item $\L \cong \op{QTh}(\msc K)$ for some quasivariety $\msc K$ of 
one-element structures.
\end{enumerate}
\end{thm}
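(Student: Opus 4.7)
The plan is to prove both implications by direct, essentially mutually inverse constructions that translate between semilattice congruences and quasi-equational theories via a propositional Horn-clause reduct. For $(1)\Rightarrow(2)$, given a semilattice $\S=(S,+,0)$, I would take the language $\msc R=\{R_s:s\in S\}$ of unary relation symbols and let $\msc K$ be the quasivariety axiomatized by $x\approx y$ (forcing one-element carriers), the law $R_0(x)$, and, for every $s,t\in S$, the Horn clauses $R_s(x)\wedge R_t(x)\Rightarrow R_{s+t}(x)$ and $R_{s+t}(x)\Rightarrow R_s(x)$. Conversely, for $(2)\Rightarrow(1)$, given a one-element quasivariety $\msc K$ with relation symbol set $\msc R$ and theory $T$, I would let $\S_0$ be the free $\{+,0\}$-semilattice on $\msc R$ (finite subsets of $\msc R$ under union), declare $A\sim B$ iff $T\vdash R_A\Leftrightarrow R_B$ where $R_A=\bigwedge_{R\in A}R(x)$, and set $\S=\S_0/{\sim}$; deductive closure of $T$ forces $\sim$ to be compatible with $\cup$, so $\S$ is a well-defined semilattice with $0$.

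In both cases the isomorphism would be $T'\mapsto\theta_{T'}$ where $a\,\theta_{T'}\,b$ iff $T'\vdash R_a\Leftrightarrow R_b$. The starting observation is that in a one-element structure every term collapses to the unique element, so each atomic formula is effectively of the form $R(x)$ and, by repeated use of the base axiom $R_s\wedge R_t\Leftrightarrow R_{s+t}$ together with $R_0\equiv\top$, every quasi-identity in $\op{QTh}(\msc K)$ reduces to a single implication $R_a\Rightarrow R_b$. The inverse map sends a congruence $\theta$ to the theory $T_\theta$ generated by the base together with $\{R_a\Leftrightarrow R_b:a\,\theta\,b\}$. The crux of the argument is the lemma that $T_\theta\vdash R_a\Rightarrow R_b$ iff $a+b\,\theta\,a$, equivalently $[b]\leq[a]$ in $\S/\theta$. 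One direction is immediate from $R_{a+b}\Leftrightarrow R_a$ combined with the base axiom $R_{a+b}\Rightarrow R_b$; for the other I would argue model-theoretically, checking that the $\theta$-saturated, downward-closed subsemilattices of $\S$ containing $0$ are precisely the truth-sets of $\msc K$-models satisfying $T_\theta$, and then noting that the principal downset of $[a]$ lifts to such a model which omits $[b]$ whenever $[b]\not\leq[a]$.

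With the lemma in place, $\theta\mapsto T_\theta$ and $T'\mapsto\theta_{T'}$ are mutually inverse and order-preserving, yielding $\op{QTh}(\msc K)\cong\op{Con}(\S,+,0)$. The main obstacle will be justifying the propositional reduction and the ``only if'' half of the key lemma: one must verify that, in the presence of $x\approx y$, all variables and terms collapse so that the deductive content of any quasi-identity is faithfully captured by a single implication among the $R_s(x)$'s, and in particular that $T_\theta$ does not accidentally prove more implications than those forced by the semilattice order of $\S/\theta$. It is here that the model-theoretic description of $\msc K$-models in terms of downward-closed sub-$+$-semilattices does the real work. The remaining checks---that $\sim$ is a semilattice congruence, that $\theta_{T'}$ is closed under $+$, and that the correspondence preserves arbitrary lattice joins and meets, hence induces an isomorphism of algebraic lattices---are routine.
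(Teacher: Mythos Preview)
Your argument is correct, but it proceeds along a different route from the paper. The paper does not prove Theorem~\ref{one-elt} directly; it simply observes that the statement is a reformulation of Gorbunov--Tumanov's Theorem~5.2.8 once one invokes the Fajtlowicz--Schmidt duality (Theorem~\ref{consl}): Gorbunov--Tumanov characterize the $Q$-lattices of one-element quasivarieties as exactly the lattices $\op{Sp}(\A)$ of algebraic subsets of algebraic lattices, and Theorem~\ref{consl} identifies $\op{Sp}(\A)$ with the dual of $\op{Con}(\A_c,+,0)$. Thus the paper's route passes through the $\op{Sp}$ machinery and the Galois connection between semilattice congruences and algebraic subsets.

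Your construction, by contrast, is a direct propositional Horn-clause encoding: you attach a unary predicate $R_s$ to each semilattice element, axiomatize so that models of $\msc K$ are exactly ideals of $\S$, and set up an explicit bijection $\theta\leftrightarrow T_\theta$ between congruences and extensions of the base theory. This is more elementary and fully self-contained---it avoids both the Gorbunov--Tumanov citation and the $\op{Sp}$ duality---and it makes the correspondence concrete enough that one sees immediately why the semilattice of compact $\msc K$-congruences on $\F_{\msc K}(\omega)$ (which here is a one-point structure, so endomorphisms act trivially) recovers $\S$. The paper's route, on the other hand, situates the result inside the general framework that drives the rest of the article, and foreshadows the main representation Theorem~\ref{rep2}, of which Theorem~\ref{one-elt} is the degenerate ``no operators'' case. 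Your key lemma (that $T_\theta\vdash R_a\Rightarrow R_b$ iff $a+b\,\theta\,a$) and its model-theoretic verification via $\theta$-closed ideals are essentially the one-element specialization of the argument for part~(3) in the proof of Theorem~\ref{rep2}.
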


Congruence lattices of semilattices are coatomistic, i.e., every element 
is a meet of coatoms.  Thus the $Q$-lattices for the special quasivarieties
in the preceding theorem are correspondingly atomistic.

\section{Congruence lattices of semilattices}

Let $\op{Sp}(\L)$ denote the lattice of algebraic subsets of a complete
lattice $\L$.   If $\L$ is an algebraic lattice, let $\L_c$ denote its
semilattice of compact elements.  This is a join semilattice with zero.
The following result of Fajtlowicz and Schmidt \cite{FS} directly generalizes 
the Freese-Nation theorem \cite{FN}.
See also \cite{FKN}, \cite{HMS}, \cite{ETS}.

\begin{thm} \label{consl}
If\/ $\L$ is an algebraic lattice, then $\op{Sp}(\L) \cong^d \op{Con}\,\L_c$.
\end{thm}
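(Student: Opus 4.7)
The plan is to exhibit explicit, mutually inverse, order-reversing maps between $\op{Sp}(\L)$ and $\op{Con}\,\L_c$. For $A\in\op{Sp}(\L)$, closure of $A$ under arbitrary meets yields a closure operator on $\L$,
\[ \eta_A(x)\;=\;\bigwedge\{a\in A:a\ge x\}, \]
with fixed-point set $A$. Define $\Psi(A)=\theta_A$ on $\L_c$ by declaring $x\,\theta_A\,y$ iff $\eta_A(x)=\eta_A(y)$. In the reverse direction, define $\Phi(\theta)=A_\theta$ by
\[ A_\theta\;=\;\{x\in\L:\text{for all }c,d\in\L_c,\ c\,\theta\,d\text{ and }c\le x\text{ imply }d\le x\}. \]

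First I would check these maps are well defined and order-reversing. That $\theta_A$ is a join-semilattice congruence uses the standard identity $\eta_A(x\vee y)=\eta_A(\eta_A(x)\vee\eta_A(y))$ valid for any closure operator on a lattice. That $A_\theta$ is algebraic follows from compactness: given $c\,\theta\,d$ in $\L_c$ and $c\le\bigvee x_i$ for a directed family in $A_\theta$, compactness of $c$ forces $c\le x_i$ for some $i$, whence $d\le x_i\le\bigvee x_i$. Both maps reverse inclusion by a direct verification.

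For $\Psi\Phi=\op{id}$, the inclusion $\theta\subseteq\theta_{A_\theta}$ is immediate from the definitions. For the reverse, given $\neg(c\,\theta\,d)$, form the ideal $J_c=\{a\in\L_c:a\vee c\,\theta\,c\}$ of $\L_c$ and let $x_c=\bigvee J_c\in\L$. A short calculation shows $x_c\in A_\theta$, and $c\le x_c$; if also $d\le x_c$, then by compactness $d\in J_c$, so $d\vee c\,\theta\,c$. The symmetric construction with $J_d$ would give $c\vee d\,\theta\,d$, and transitivity would contradict $\neg(c\,\theta\,d)$. Hence $x_c$ or $x_d$ separates $c$ from $d$ relative to $A_\theta$, proving $\neg(c\,\theta_{A_\theta}\,d)$.

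I expect the main obstacle to be the inclusion $A_{\theta_A}\subseteq A$ in $\Phi\Psi=\op{id}$. The strategy is to show that every $x\in A_{\theta_A}$ satisfies $\eta_A(c)\le x$ for every compact $c\le x$, so that $x=\bigvee\{\eta_A(c):c\in\L_c,\,c\le x\}$ is a directed join of elements of $A$ and hence $x\in A$. The key step is that for every compact $d\le\eta_A(c)$,
\[ \eta_A(c\vee d)=\eta_A(\eta_A(c)\vee\eta_A(d))=\eta_A(\eta_A(c))=\eta_A(c), \]
so $c\,\theta_A\,c\vee d$; since $c\le x$ and $x\in A_{\theta_A}$, we obtain $c\vee d\le x$ and hence $d\le x$. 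Because $\L$ is algebraic, $\eta_A(c)$ is the join of its compact lower bounds, all of which are $\le x$, so $\eta_A(c)\le x$ as required.
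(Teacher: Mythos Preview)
Your proof is correct and follows essentially the same route as the paper's. The paper sets up the Galois connection between $\op{Con}\,\S$ and $\op{Sp}(\msc I(\S))$ for $\S=\L_c$, sending $\theta$ to the set of $\theta$-closed ideals and sending $\H$ to the relation ``same set of members of $\H$ above''; under the identification $\L\cong\msc I(\L_c)$ these are exactly your $\Phi$ and $\Psi$, your separating ideal $J_c$ is the paper's ideal $\{z:x+z\,\theta\,x\}$, and your directed-join argument for $A_{\theta_A}\subseteq A$ is the paper's $\bigcup\{\hat x:x\in J\}$ step, so the only difference is that you organize the argument around the closure operator $\eta_A$ rather than speaking of ideals.
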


\begin{proof}
For an arbitrary join 0-semilattice $\S = \la S,+,0 \ra$ we set up a Galois
correspondence between congruences of $\S$ and algebraic subsets of
the ideal lattice $\msc I(\S)$ as follows.

For $\theta \in \op{Con}\,\S$, let $h(\theta)$ be the set of all
$\theta$-closed ideals of $\S$.

For $\H \in \op{Sp}(\msc I (\S))$, let $x \  \rho(\H) \ y$ if
$\{ I \in \H: x \in I \} = \{ J \in \H: y \in J \}$. 

It is straightforward to check that $h$ and $\rho$ are order-reversing,
that $h(\theta) \in \op{Sp}(\msc I(\S))$ and $\rho(\H) \in \op{Con}\,\S$.

To show that $\theta = \rho h(\theta)$, we note that if $x < y$ (w.l.o.g.)
and $(x,y) \notin \theta$, then $\{ z \in S : x+z \, \theta \, x \}$ is
a $\theta$-closed ideal containing $x$ and not $y$.  Hence $(x,y) \notin
\rho h(\theta)$.

To show that $\H = h \rho(\H)$, consider an ideal $J \notin \H$.
For any $x \in S$, let $\hat{x} = \bigcap \{ I \in \H : x \in I \}$,
noting that $\hat{x} \in \H$.  Then $\{ \hat{x} : x \in J \}$ is
up-directed, whence $\bigcup \{ \hat{x} : x \in J \} \in \H$.
Therefore the union properly contains $J$, so that there exist $x<y$
with $x \in J$ and $y \in \hat{x}-J$, and $J$ is not $\rho(\H)$-closed.  
Thus $J \notin \H$ implies $J \notin h \rho(\H)$, as desired.
\end{proof}

Compare this with the following result of Adaricheva, Gorbunov and Tumanov
(\cite{AGT} Theorem~2.4, also \cite{VG2} Theorem~4.4.12).

\begin{thm}
Let\/ $\L$ be a join semidistributive lattice that is finitely presented
 within the class $\mbf{SD}_{\join}$.  Then $\L \leq \op{Sp}(\A)$
for some algebraic and dually algebraic lattice $\A$.
\end{thm}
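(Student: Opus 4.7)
The plan is to combine the finite presentation of $\L$ in $\mathbf{SD}_\vee$ with the duality $\op{Sp}(\A)\cong^d \op{Con}\,\A_c$ from Theorem~\ref{consl} to construct a suitable $\A$ explicitly. Fix generators $g_1,\dots,g_n$ of $\L$ and finitely many defining lattice relations; then there are only finitely many ``critical pairs'' $p\neq q$ of lattice terms in the $g_i$ whose inequality any embedding must witness.

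Because every lattice of the form $\op{Sp}(\A)$ automatically lies in $\mathbf{SD}_\vee$, any assignment $g_i\mapsto S_i\in\op{Sp}(\A)$ that respects the defining relations extends uniquely to a lattice homomorphism $\L\to\op{Sp}(\A)$. So the task reduces to producing an algebraic and dually algebraic lattice $\A$ together with algebraic subsets $S_1,\dots,S_n$ that (i) satisfy the defining relations of $\L$ and (ii) separate the finitely many critical pairs.

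My construction would take $\A$ to be the ideal lattice $\msc I(\mathbf P)$ of a poset $\mathbf P$ built piecewise from the presentation: one ``gadget'' per generator $g_i$ (producing the seed of $S_i$), and one per critical pair (ensuring that pair is separated by an appropriate choice of which points of $\mathbf P$ lie in which $S_i$). Ideal lattices of posets are always algebraic, and if $\mathbf P$ is kept chain-finite, then $\A=\msc I(\mathbf P)$ is also dually algebraic. The image of each lattice term $p(g_1,\dots,g_n)$ is the algebraic subset of $\A$ obtained by applying the $\op{Sp}(\A)$-operations (intersection for meet, algebraic hull of union for join) to the $S_i$.

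The main obstacle is engineering $\mathbf P$ so that both algebraicity conditions hold \emph{and} the algebraic hulls involved never collapse distinct critical pairs or accidentally overshoot a defining relation; the subtlety is that joins in $\op{Sp}(\A)$ are algebraic hulls of unions, not unions, so passing from the underlying set-structure of the $S_i$ to their lattice operations in $\op{Sp}(\A)$ must be monitored. Finite presentability is exactly what makes this tractable: only finitely many conditions must be arranged simultaneously, so a bounded-complexity $\mathbf P$ suffices. Join semidistributivity of the ambient $\op{Sp}(\A)$ then guarantees that no further unanticipated relation can be forced on the image, yielding the desired embedding $\L\leq\op{Sp}(\A)$.
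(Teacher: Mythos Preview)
The paper does not prove this theorem; it is quoted (with attribution to Adaricheva--Gorbunov--Tumanov \cite{AGT} and Gorbunov \cite{VG2}) as a known result, offered only for comparison with Theorem~\ref{consl}. So there is no in-paper argument to compare your proposal against.

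Assessing the proposal on its own: there is a genuine gap at the outset. You assert that because $\L$ is finitely presented, ``there are only finitely many critical pairs $p\neq q$ of lattice terms in the $g_i$ whose inequality any embedding must witness.'' But a finitely presented lattice in $\mbf{SD}_{\join}$ can be infinite---already the free $\mbf{SD}_{\join}$ lattice on three generators is infinite---so $\L$ may contain infinitely many distinct pairs of elements. You give no reason why guaranteeing injectivity reduces to separating a \emph{finite} list of pairs; equivalently, you would need that every nontrivial congruence on $\L$ lies above one of finitely many atoms of $\op{Con}\,\L$, and nothing in finite presentability alone delivers that. Without this reduction, the ``bounded-complexity $\mbf P$'' strategy cannot get started.

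Even granting the finitely-many-pairs claim, the remainder is a plan rather than a proof. The heart of the matter is exactly the step you flag as ``the main obstacle'': arranging $\mbf P$ so that the algebraic hulls computed in $\op{Sp}(\msc I(\mbf P))$ satisfy precisely the defining relations of $\L$ and no additional ones. You acknowledge the subtlety but do not resolve it; saying that ``a bounded-complexity $\mbf P$ suffices'' is a restatement of the desired conclusion, not an argument. The published proof in \cite{AGT} proceeds by more structural means, and any gadget-based alternative would need substantial additional work at exactly this point.
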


On the other hand, Example 4.4.15 of Gorbunov \cite{VG2} gives a 4-generated
join semidistributive lattice that is not embeddable into any lower
continuous lattice satisfying $\op{SD}_{\join}$.

Keith Kearnes points out that the class $\msc{ES}$ of lattices that are
embeddable into congruence lattices of semilattices is not first order.
Indeed, every finite meet semidistributive lattice is in $\msc {ES}$, 
and $\msc {ES}$ is closed under $\op{S}$ and $\op{P}$.
Now the quasivariety $\op{SD}_{\meet}$ is generated by its finite members
(Tumanov \cite{VT}, Theorem 4.1.7 in \cite{VG2}), while $\msc{ES}$ is
properly contained in $\op{SD}_{\meet}$.  Hence $\msc{ES}$ is not a
quasivariety, which means it must not be closed under ultraproducts.
This result has been generalized in Kearnes and Nation~\cite{KN}.

\section{Connection with Quasivarieties}

In this section, we will show that for each quasivariety $\msc K$ of 
structures, the lattice of quasi-equational theories $\op{Qth}(\msc K)$ 
is isomorphic to the congruence lattice of a semilattice with operators.

Given a quasivariety $\msc K$, let $\F = \F_{\msc K}(\omega)$ be the 
$\msc K$-free algebra on $\omega$ generators, and let $\op{Con}_{\msc K}\,\F$
be the lattice of $\msc K$-congruences of $\F$.  
For a set $S$ of atomic formulae, recall that the $\msc K$-congruence
generated by $S$ is 
\[ \op{con}_{\msc K}\,S = \bigcap \{ \psi \in \op{Con} \,\F : 
   \F/\psi \in \msc K \text{ and } S \subseteq \psi \} .  \]
Then let $\T=\T_{\msc K}$ denote the join semilattice of compact 
$\msc K$-congruences in $\op{Con}_{\msc K}\,\F$.  
Thus $\T = (\op{Con}_{\msc K}\, \F_{\msc K}(\omega))_c$
consists of finite joins of the form $\Join_j \varphi_j$, with each
$\varphi_j$ either $\op{con}_{\msc K}\,(s,t)$ or $\op{con}_{\msc K}\,R(\ol s)$ 
for terms $s$, $t$, $s_i \in \F$ and a relation $R$.

Let $X$ be a free generating set for $\F_{\msc K}(\omega)$.  Any map
$\sigma_0 : X \to \F$ can be extended to an endomorphism $\sigma : \F \to \F$
in the usual way.  Since the image $\sigma(\F)$ is a substructure
of $\F$, the kernel of an endomorphism $\sigma$ is a 
$\msc K$-congruence.  The endomorphisms of $\F$ form a monoid $\op{End}\,\F$.

The endomorphisms of $\F$ act naturally on $\T$.  For
$\vare \in \op{End}\,\F$, define 
\begin{align*}
\widehat{\vare}(\op{con}_{\msc K}\,(s,t)) 
    &= \op{con}_{\msc K}\,(\vare s,\vare t) \\  
\widehat{\vare}(\op{con}_{\msc K}\,R(\ol s)) 
    &= \op{con}_{\msc K}\,R(\vare \ol s) \\  
\widehat{\vare}(\Join_j \varphi_j) &= \Join_j \widehat{\vare}\varphi_j.
\end{align*}
The next lemma is used to check the crucial technical details that
$\widehat{\vare}$ is well-defined, and hence join-preserving. 

\begin{lm} \label{check}
Let\/ $\msc K$ be a quasivariety, $\F$ a $\msc K$-free algebra, and
$\vare \in \op{End}\,\F$.  Let $\alpha$, $\beta_1, \dots, \beta_m$
be atomic formulae.  In\/ $\op{Con}_{\msc K}\,\F$,
\[  \op{con}_{\msc K} \,\alpha \leq \Join \op{con}_{\msc K}\, \beta_j
\qquad \text{implies} \qquad
 \widehat{\vare}(\op{con}_{\msc K}\,\alpha) \leq 
\Join \widehat{\vare}(\op{con}_{\msc K} \,\beta_j)  .\]
\end{lm}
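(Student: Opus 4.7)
The plan is to use a pullback / kernel-of-composition argument. Set $\psi = \bigvee_j \op{con}_{\msc K}\,\widehat{\vare}(\beta_j) \in \op{Con}_{\msc K}\,\F$ (this is the target of the desired inequality). Since $\psi$ is a $\msc K$-congruence, $\F/\psi \in \msc K$. Let $\pi : \F \to \F/\psi$ be the quotient map and consider the composite homomorphism $h = \pi \circ \vare : \F \to \F/\psi$. Let $\theta = \ker h$. By the isomorphism theorem for structures (as stated in the excerpt), $\F/\theta$ is isomorphic to the substructure $h(\F)$ of $\F/\psi$, and because $\msc K$ is closed under substructures, $\F/\theta \in \msc K$. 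Hence $\theta \in \op{Con}_{\msc K}\,\F$.

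Next I would verify the key containment: for each $j$, $\op{con}_{\msc K}\,\beta_j \leq \theta$. Unwinding the definition of kernel for structures, one sees that if $\beta_j$ is of the form $s \approx t$, then $\widehat{\vare}(\beta_j) \in \psi$ gives $(\vare s, \vare t) \in \psi_0$, which is exactly the statement $(s,t) \in \theta_0$; and if $\beta_j$ is $R(\ol s)$, then $\widehat{\vare}(\beta_j) \in \psi$ gives $\vare(\ol s) \in \psi_1^R$, which by definition of $\theta_1^R = h^{-1}(R^{\F/\psi})$ is exactly $\ol s \in \theta_1^R$. Either way, $\beta_j$ lies in $\theta$, so $\op{con}_{\msc K}\,\beta_j \leq \theta$.

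Taking the join over $j$ gives $\bigvee_j \op{con}_{\msc K}\,\beta_j \leq \theta$, and then the hypothesis $\op{con}_{\msc K}\,\alpha \leq \bigvee_j \op{con}_{\msc K}\,\beta_j$ forces $\op{con}_{\msc K}\,\alpha \leq \theta$, i.e.\ $\alpha \in \theta$. Running the definition of $\theta = \ker(\pi \circ \vare)$ backwards through the same equation/relation case split then yields $\widehat{\vare}(\alpha) \in \psi$, and therefore $\op{con}_{\msc K}\,\widehat{\vare}(\alpha) \leq \psi = \bigvee_j \op{con}_{\msc K}\,\widehat{\vare}(\beta_j)$, as required.

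I do not expect any serious obstacle; the argument is a standard kernel-pullback trick. The only place where care is required is the back-and-forth translation between ``$\alpha$ lies in the congruence $\theta$'' and ``$\widehat{\vare}(\alpha)$ lies in $\psi$'' for the two kinds of atomic formulas, since for relation symbols one must invoke the relational component $\kappa_1^R = h^{-1}(R^{\F/\psi})$ of the kernel rather than the more familiar equivalence-relation part. Once one is comfortable with Gorbunov's definition of congruence on a structure, the proof is essentially immediate, and as a corollary $\widehat{\vare}$ is well-defined on compact $\msc K$-congruences and preserves joins.
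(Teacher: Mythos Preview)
Your argument is correct and is essentially the same kernel-of-composition trick the paper uses: the paper phrases it as ``for an arbitrary $\msc K$-congruence $\theta$ containing all $\vare\beta_j$, form $h\vare$ with $h:\F\to\F/\theta$ and conclude $\vare\alpha\in\theta$,'' whereas you apply this directly to the specific $\psi=\Join_j\op{con}_{\msc K}(\vare\beta_j)$, which suffices. One small notational slip: you write $\widehat{\vare}(\beta_j)$ for an atomic formula $\beta_j$, but $\widehat{\vare}$ is defined on congruences, so this should read $\vare\beta_j$ (or $\widehat{\vare}(\op{con}_{\msc K}\,\beta_j)=\op{con}_{\msc K}(\vare\beta_j)$).
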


\begin{proof}
For an atomic formula $\alpha$ and a congruence $\theta$, let us write 
$\alpha \in \theta$ to mean 
either (1) $\alpha$ is $s \approx t$ and $(s,t) \in \theta_0$, or
(2) $\alpha$ is $R(\ol s)$ and $\ol s \in \theta_1^R$.
So for the lemma, we are given that if $\F/\psi \in \msc K$ and
$\beta_1, \dots, \beta_m \in \psi$, then $\alpha \in \psi$.
We want to show that if $\F/\theta \in \msc K$ and
$\vare\beta_1, \dots, \vare\beta_m \in \theta$, then $\vare\alpha \in \theta$.

Let $\theta \in \op{Con}\,\F$ be a congruence such that $\F/\theta \in \msc K$,
and let $h:\F \to \F/\theta$ be the natural map.  
Then $h\vare : \F \to \F/\theta$, and since $h\vare(\F)$ is a substructure 
of $h(\F)$, the image is in $\msc K$.  
Now $\beta_1, \dots, \beta_m \in \ker h\vare$, and so $\alpha \in \ker h\vare$.
Thus $\vare \alpha \in \ker h = \theta$, as desired.
\end{proof}

Now let $\xi$ be a compact $\msc K$-congruence.  
Suppose that $\xi = \Join_i \varphi_i$ and $\xi = \Join_j \psi_j$ 
in $\T$, with each $\varphi_i$ and $\psi_j$ being a principal 
$\msc K$-congruence.  
Then for each $i$ we have $\varphi_i \leq \Join_j \psi_j$,
whence $\widehat{\vare}\varphi_i \leq \Join_j \widehat{\vare}\psi_j$
by Lemma~\ref{check}.  
Thus $\Join_i \widehat{\vare}\varphi_i \leq \Join_j \widehat{\vare}\psi_j$.
Symmetrically
$\Join_j \widehat{\vare}\psi_j \leq \Join_i \widehat{\vare}\varphi_i$, 
and so $\widehat{\vare} \xi = \Join_j \widehat{\vare}\psi_j = 
\Join_i \widehat{\vare}\varphi_i$ is well-defined.  

It then follows from the definition of $\widehat{\vare}$ that if
$\varphi = \Join_i \varphi_i$ and $\psi = \Join_j \psi_j$ in $\T$, then
\begin{align*}
\widehat{\vare}( \varphi \join \psi ) &=
   \widehat{\vare}(\Join_i \varphi_i \,\join\, \Join_j \psi_j )\\
&=\Join_i \widehat{\vare}\varphi_i \,\join\, \Join_j \widehat{\vare}\psi_j )\\
   &= \widehat{\vare}\varphi \join \widehat{\vare}\psi .
\end{align*}
Thus $\widehat{\vare}$ preserves joins.  
Also note that for the zero congruence we have $\widehat{\vare}(0)=0$.

Let $\widehat{\E}=\{ \widehat{\vare}: \vare \in \op{End}\,\F \}$, and 
consider the algebra $\S= \S_{\msc K}= \la \T, \join, 0, \widehat{\E} \ra$.
By the preceding remarks, 
the operations of $\widehat{\E}$ are \emph{operators} on $\S$, i.e.,
$(\join,0)$-homomorphisms, so $\S$ is a join semilattice with operators.
With this setup, we can now state our main result.

\begin{thm} \label{rep2}
For a quasivariety $\msc K$,
\[ \op{L}_q(\msc K) \cong^d \op{Con}\,\S \]
where $\S = \la \T,\join,0,\widehat{\E} \ra$ with
$\T$ the semilattice of compact congruences of\/ $\op{Con_{\msc K}}\,\F$, 
$\E = \op{End}\,\F$, and\/ $\mbf F = \mbf F_{\msc K}(\omega)$.
\end{thm}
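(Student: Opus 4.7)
The plan is to construct mutually inverse order-preserving bijections $\Phi: \op{QTh}(\msc K) \to \op{Con}\,\S$ and $\Psi: \op{Con}\,\S \to \op{QTh}(\msc K)$; the theorem then follows via $\op{QTh}(\msc K) \cong^d \op{L}_q(\msc K)$. Given $T \in \op{QTh}(\msc K)$ with model class $\msc K_T$, let $c_T$ denote the $\msc K_T$-closure operator on $\op{Con}_{\msc K}\,\F$, sending $\varphi$ to the least $\msc K_T$-congruence containing it. Put $\Phi(T) = \Theta_T$, where $\xi \,\Theta_T\, \eta$ iff $c_T(\xi) = c_T(\eta)$. The equivalence-relation axioms are clear; join-compatibility follows from the standard algebraic-closure identity $c_T(\xi \join \eta) = c_T(c_T(\xi) \join c_T(\eta))$; and $\widehat{\vare}$-compatibility is obtained by rerunning the argument of Lemma~\ref{check} with $\msc K_T$ in place of $\msc K$, since that proof uses only closure of the ambient quasivariety under substructures.

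Conversely, given $\Theta \in \op{Con}\,\S$, let $\Psi(\Theta) = T_\Theta$ be the set of all quasi-identities $\&_j \beta_j \implies \alpha$ (over the free generators $X$ of $\F$) satisfying
\[\op{con}_{\msc K}(\alpha) \,\join\, \Join_j \op{con}_{\msc K}(\beta_j) \;\Theta\; \Join_j \op{con}_{\msc K}(\beta_j),\]
equivalently, such that the class of $\op{con}_{\msc K}(\alpha)$ lies below the class of $\Join_j \op{con}_{\msc K}(\beta_j)$ in $\S/\Theta$. That $T_\Theta$ is a quasi-equational theory extending the theory of $\msc K$ is a direct translation of the inference rules of quasi-equational logic into properties of $\Theta$: weakening and conjunction of premises correspond to join-compatibility; cut to transitivity together with join; substitution to $\widehat{\E}$-compatibility; and the equality-congruence axioms are already built into $\T$ as a set of $\msc K$-congruences. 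Monotonicity of $\Phi$ and $\Psi$ is direct, using $c_{T'}\circ c_T = c_{T'}$ when $T \subseteq T'$.

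The identity $\Psi\Phi(T) = T$ is semantic: a quasi-identity $\&_j \beta_j \implies \alpha$ belongs to $T$ iff it holds throughout $\msc K_T$, iff $c_T(\op{con}_{\msc K}(\alpha)) \leq c_T(\Join_j \op{con}_{\msc K}(\beta_j))$, which is exactly the defining condition for $T_{\Theta_T}$. The easy half of $\Phi\Psi(\Theta) = \Theta$ is also direct: if $\xi \,\Theta\, \eta$ with $\xi = \Join_i \op{con}_{\msc K}(\alpha_i)$ and $\eta = \Join_j \op{con}_{\msc K}(\beta_j)$, then by join-compatibility the quasi-identities $\&_i \alpha_i \implies \beta_j$ and $\&_j \beta_j \implies \alpha_i$ all lie in $T_\Theta$, so $c_{T_\Theta}(\xi) = c_{T_\Theta}(\eta)$.

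The main obstacle is the remaining direction of $\Phi\Psi(\Theta) = \Theta$: showing that $c_{T_\Theta}(\xi) = c_{T_\Theta}(\eta)$ forces $\xi \,\Theta\, \eta$. This requires the completeness of quasi-equational logic — any quasi-identity valid throughout $\msc K_{T_\Theta}$ must be derivable from $T_\Theta$ — together with the fact that the deductive closure of $T_\Theta$ remains inside $T_\Theta$ by the congruence translation given above. The operator action $\widehat{\E}$ is indispensable at this point, since closure under substitution is precisely the ingredient needed for the deductive closure of $T_\Theta$ to stay inside $T_\Theta$; without it, the proof that $\Phi\Psi$ is the identity breaks down.
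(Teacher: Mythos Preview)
Your approach is correct but takes a genuinely different route from the paper's. The paper first passes from $\op{Con}\,\S$ to the isomorphic lattice $\op{Don}\,\S$ of compatible quasi-orders containing $\geq$, and then sets up a Galois connection between $\op{Don}\,\S$ and $\L_q(\msc K)$ directly (working on the model side rather than the theory side). The hard step there --- showing $\Delta\kappa(X)\subseteq X$ for $X\in\op{Don}\,\S$ --- is handled by a self-contained Lindenbaum-type construction: given $(\mu,\nu)\notin X$, one iteratively closes $\mu$ under all pairs $(\beta,\gamma)\in X$ to obtain a congruence $\theta$ on $\F$, checks that $\F/\theta\in\kappa(X)$, and observes $\nu\nleq\theta$. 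That construction is precisely an inline proof of the completeness theorem for quasi-equational logic that you invoke as a black box. Your route is shorter and makes the dictionary between inference rules (cut, weakening, substitution, the equality axioms) and the defining properties of a congruence on $\la\T,\join,0,\widehat{\E}\ra$ fully explicit; the paper's route is self-contained (no external appeal to completeness) and delivers the $\op{Don}$/$\op{Eon}$ reformulations as byproducts, which are then used throughout the later analysis of equa-interior operators.
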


In Part II, we will use this technical variation.

\begin{thm} \label{rep3}
Let $\msc K$ be a quasivariety and let $n \geq 1$ be an integer.
Then the lattice of all quasi-equational theories that
\begin{enumerate}
\item contain the theory of\/ $\msc K$, and
\item are determined relative to $\msc K$ by quasi-identities in
at most $n$ variables, 
\end{enumerate}
is isomorphic to $\op{Con}\,\S_n$, 
where $\S_n = \la \T_n,\join,0,\widehat{\E} \ra$ with 
$\T_n$ the semilattice of compact congruences of\/ 
$\op{Con_{\msc K}}\,\F$, 
$\E = \op{End}\,\F$, and\/ $\mbf F = \mbf F_{\msc K}(n)$.
\end{thm}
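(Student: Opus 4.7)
The plan is to imitate, step by step, the proof of Theorem~\ref{rep2}, replacing $\F_{\msc K}(\omega)$ by $\F_{\msc K}(n)$ throughout and tracking the restriction to quasi-identities in at most $n$ variables. First I would verify that the construction preceding Theorem~\ref{rep2} goes through unchanged in this finite-variable setting. Lemma~\ref{check} makes no use of the cardinality of the free generating set, so each $\vare \in \op{End}\,\F_{\msc K}(n)$ still induces a well-defined $(\join,0)$-preserving operator $\widehat{\vare}$ on $\T_n$, and hence $\S_n = \la \T_n,\join,0,\widehat{\E}\ra$ is genuinely a semilattice with operators.

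Next I would set up the order-reversing Galois correspondence. For a quasi-equational theory $\msc Q$ satisfying (1) and (2), define $\theta_{\msc Q}$ on $\T_n$ by declaring $\xi \,\theta_{\msc Q}\, \eta$ to hold when, writing $\xi = \Join_i \op{con}_{\msc K}\,\alpha_i$ and $\eta = \Join_j \op{con}_{\msc K}\,\beta_j$ with the $\alpha_i,\beta_j$ atomic formulae in variables from $\{x_1,\ldots,x_n\}$, each quasi-identity $\&_i \alpha_i \implies \beta_j$ and $\&_j \beta_j \implies \alpha_i$ lies in $\msc Q$. Conversely, for $\theta \in \op{Con}\,\S_n$, let $\msc Q_\theta$ be the extension of the theory of $\msc K$ by all $n$-variable quasi-identities $\&_i \alpha_i \implies \beta$ for which $\Join_i \op{con}_{\msc K}\,\alpha_i \;\theta\; \op{con}_{\msc K}\,\beta \join \Join_i \op{con}_{\msc K}\,\alpha_i$.

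The routine part of the verification is to check that both maps are order-reversing, that $\theta_{\msc Q}$ is a join-compatible equivalence relation, and that $\msc Q_\theta$ is closed under the standard rules of quasi-equational deduction; all of this follows from the definitions together with Lemma~\ref{check}. The operators $\widehat{\E}$ play an essential role here: requiring $\theta$ to respect each $\widehat{\vare}$ is exactly the closure of $\msc Q_\theta$ under substitution of $n$-variable terms for $x_1,\ldots,x_n$, which is precisely as much substitution as one can legitimately use when deriving consequences from $n$-variable quasi-identities.

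The main obstacle is the two round-trip identities $\msc Q_{\theta_{\msc Q}} = \msc Q$ and $\theta_{\msc Q_\theta} = \theta$. The first is where condition (2) earns its keep: by hypothesis every quasi-identity in $\msc Q$ is equivalent modulo the theory of $\msc K$ to one whose atomic subformulae are representable in $\T_n$, so deductive closure in $\msc Q$ matches congruence closure in $\S_n$. The second direction is subtler and should parallel the final paragraph of the proof of Theorem~\ref{consl}: given $(\xi,\eta) \notin \theta$, I would use compactness of the elements of $\T_n$ together with the join and operator structure of $\S_n$ to extract an explicit $n$-variable quasi-identity in $\msc Q_\theta$ that separates $\xi$ from $\eta$, thereby showing $(\xi,\eta) \notin \theta_{\msc Q_\theta}$.
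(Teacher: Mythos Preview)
Your high-level strategy—rerun the proof of Theorem~\ref{rep2} with $\F_{\msc K}(n)$ in place of $\F_{\msc K}(\omega)$—is exactly what the paper does, and your observation that Lemma~\ref{check} is indifferent to the cardinality of the generating set is the right starting point.

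However, the implementation you sketch is not the paper's.  The proof of Theorem~\ref{rep2} does not set up a direct correspondence between theories and congruences; it passes through the auxiliary lattice $\op{Don}\,\S$ and a \emph{semantic} Galois connection between pairs $(\beta,\gamma)\in T^2$ and structures $\A\in\msc K$, via the maps $\kappa$ and $\Delta$.  The hard step there (item~(3), showing $\Delta\kappa(X)\subseteq X$) is handled by building an explicit model: from $(\mu,\nu)\notin X$ one constructs a $\msc K$-congruence $\theta$ on $\F$ with $\F/\theta\in\kappa(X)$ and $\nu\nleq\theta$.  In the $n$-variable version one simply runs the same construction on $\F_{\msc K}(n)$; condition~(2) guarantees that quasivarieties of the required form are detected by homomorphisms out of $\F_{\msc K}(n)$, which is all the argument needs.

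Your syntactic route can be made to work, but the description of the crucial round-trip $\theta_{\msc Q_\theta}\subseteq\theta$ is off.  The analogy with the last paragraph of Theorem~\ref{consl} is misleading: that argument is about algebraic subsets and $\theta$-closed ideals, not about deductive closure of a set of quasi-identities.  More importantly, to show $(\xi,\eta)\notin\theta_{\msc Q_\theta}$ you must exhibit an implication from $H(\xi,\eta)$ that is \emph{not} in $\msc Q_\theta$, i.e., not derivable from the generators—finding a quasi-identity \emph{in} $\msc Q_\theta$ that ``separates'' $\xi$ and $\eta$ is the wrong direction.  What the syntactic approach actually requires is a verification that each rule of $n$-variable quasi-equational deduction (substitution, cut, weakening of hypotheses) is mirrored by the congruence axioms for $\la \T_n,\join,0,\widehat{\E}\ra$: compatibility with $\widehat{\vare}$ handles substitution, and transitivity together with compatibility with $\join$ handles cut and weakening.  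Once that is checked, the set of implications encoded by $\theta$ is already deductively closed, and $\msc Q_\theta$ adds nothing new.  The paper's semantic detour via models accomplishes the same thing while avoiding any explicit enumeration of deduction rules.
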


We shall prove Theorem~\ref{rep2}, and afterwards discuss the
modification required for Theorem~\ref{rep3}, which is essentially
just replacing $\F_{\msc K}(\omega)$ by $\F_{\msc K}(n)$.

For the proof of Theorem~\ref{rep2}, and for its application, it is natural 
to use two structures closely related to the congruence lattice instead.  
For an algebra $\A$ with a join semilattice reduct,
let $\op{Don}\,\A$ be the lattice of all reflexive, transitive, compatible
relations $R$ such that $\geq \,\subseteq R$, i.e., $x \geq y$ implies
$x \, R \, y$.
Let $\op{Eon}\,\A$ be the lattice of all reflexive, transitive, compatible
relations $R$ such that
\begin{enumerate}
\item $R \subseteq \leq$, i.e., $x \,R\, y$ implies $x \leq y$, and
\item if $x \leq y \leq z$ and $x \,R\, z$, then $x \,R\, y$.
\end{enumerate}

\begin{lm}
If\/ $\A = \la A, \join, 0, \msc F \ra$ is a semilattice with operators,
then\/ $\op{Con}\,\A \cong \op{Don}\,\A \cong \op{Eon}\,\A$.
\end{lm}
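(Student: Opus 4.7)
The approach is to construct explicit order-preserving bijections $\op{Con}\A \leftrightarrow \op{Don}\A$ and $\op{Con}\A \leftrightarrow \op{Eon}\A$. In each case, one direction extracts a congruence from a given relation by taking a suitable ``symmetric'' part, while the reverse direction enriches a congruence with information from the semilattice order.

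For the first isomorphism, I would send $\theta \mapsto R_\theta := \{(x,y) : x \join y \,\theta\, x\}$, thinking of $x \,R_\theta\, y$ as ``$x \geq y$ modulo $\theta$''. Reflexivity, containment of $\geq$, and compatibility with $\join$ and with every operator in $\msc F$ (using that the operators preserve $\join$) are immediate; transitivity is a short two-line manipulation in which one joins the two hypotheses together. The inverse map is $R \mapsto R \cap R^{-1}$: the symmetric part of a reflexive, transitive, compatible relation is automatically an equivalence and a congruence. Both round trips check out routinely, since $\geq\,\subseteq R$ forces $R_{R\cap R^{-1}}=R$, and $x \join y \,\theta\, x$ together with $x \join y \,\theta\, y$ forces $x \,\theta\, y$.

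For the second isomorphism, I would send $\theta \mapsto E_\theta := \{(x,y) : x \leq y \text{ and } x \,\theta\, y\}$. Every clause of the $\op{Eon}$ definition is immediate except the convexity axiom, which holds because $x \leq y \leq z$ with $x \,\theta\, z$ forces $y = x \join y \,\theta\, y \join z = z$, and hence $x \,\theta\, y$. The inverse map sends $E \in \op{Eon}\A$ to
\[ \theta_E := \{(x,y) : x \,E\, (x \join y) \text{ and } y \,E\, (x \join y)\}, \]
which is reflexive and symmetric by inspection and compatible with $\join$ and $\msc F$ because $E$ is.

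I expect the main obstacle to be verifying that $\theta_E$ is transitive, since this is the one place where the convexity clause of $\op{Eon}\A$ does essential work. Given $(x,y),(y,z) \in \theta_E$, the plan is to push everything up to $w := x \join y \join z$: $\join$-compatibility of $E$ together with the given hypotheses and transitivity of $E$ yield $x \,E\, w$ and $z \,E\, w$; then the convexity clause applied to $x \leq x \join z \leq w$ and $z \leq x \join z \leq w$ gives $x \,E\, (x \join z)$ and $z \,E\, (x \join z)$, which is exactly $(x,z) \in \theta_E$. The two round trips then collapse without incident: $E_{\theta_E}$ forces $x \leq y$, hence $x \join y = y$, reducing the defining conditions to $x \,E\, y$; and $\theta_{E_\theta}$ simply reassembles a $\theta$-class from its join. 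Order-preservation of all four maps is evident from the definitions.
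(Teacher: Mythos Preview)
Your proposal is correct and follows essentially the same approach as the paper. Your maps coincide with the paper's: your $R_\theta$ is the paper's $\delta(\theta)=\theta\circ\geq$, your $R\cap R^{-1}$ agrees with the paper's $\gamma(R)$ on $\op{Don}\A$ (since $\geq\,\subseteq R$ and transitivity/compatibility force the two descriptions to match), and your $E_\theta$ and $\theta_E$ are exactly the composites $\theta\cap\leq$ and $E\circ E^{\ssm}$ that the paper records; the only cosmetic difference is that the paper routes the second isomorphism through $\op{Don}\A$ rather than going $\op{Con}\A\leftrightarrow\op{Eon}\A$ directly, and it dismisses as ``routine'' the transitivity check for which you (correctly) supply the details.
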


\begin{proof}
Let $\delta: \op{Con}\,\A \to \op{Don}\,\A$ \emph{via}
$\delta(\theta) = \theta \circ \geq$, so that
\[  x \,\delta(\theta)\, y \quad\text{iff}\quad x \,\theta \, x \join y \]
and let $\gamma: \op{Don}\,\A \to \op{Con}\,\A$ \emph{via}
$\gamma(R) = (R\, \cap \leq) \circ (R\, \cap \leq)^{\ssm}$, so that
\[  x \,\gamma(R)\, y \quad\text{iff}\quad x \,R\, x \join y \ \&\ 
    y \,R\, x \join y .  \]
Now we check that, for $\theta \in \op{Con}\,\A$ and $R \in \op{Don}\,\A$,
\begin{enumerate}
\item $\delta(\theta) \in \op{Don}\,\A$,
\item $\gamma(R) \in \op{Con}\,\A$,
\item $\delta$ and $\gamma$ are order-preserving,
\item $\gamma\delta(\theta)=\theta$,
\item $\delta\gamma(R)=R$.
\end{enumerate}
This is straightforward and only slightly tedious.

Similarly, let $\vare: \op{Don}\,\A \to \op{Eon}\,\A$ \emph{via}
$\vare(R)=R \,\cap \leq$, 
and $\delta': \op{Eon}\,\A \to \op{Don}\,\A$ \emph{via}
$\delta'(S)=S \,\circ \geq$, and check the analogous statements
for this pair, which is again routine.  Note that for a congruence
relation $\theta$ the corresponding eon-relation is
$\vare\delta(\theta) = \theta \,\cap \leq$, while for $S \in \op{Eon}\,\A$
we have the congruence $\gamma\delta'(S) = S \circ S^{\ssm}$.
\end{proof}

Now we define a Galois connection between $T^2$ and structures 
$\A \in \msc K$.
(The collection of structures $\A \in \msc K$ forms a proper class.
However, every quasivariety is determined by its finitely generated 
members.  So we could avoid any potential logical difficulties by 
restricting our attention to structures $\A$ defined on some fixed 
infinite set large enough to contain an isomorphic copy of each
finitely generated member of $\msc K$.)
For a pair $(\beta, \gamma) \in T^2$ and $\A \in \msc K$,
let $(\beta, \gamma) \,\Xi\, \A$ if, whenever $h:\F \to \A$ is a
homomorphism, $\beta \leq \ker h$ implies $\gamma \leq \ker h$.

Then, following the usual rubric for a Galois connection, 
for $X \subseteq T^2$ let
\[
\kappa(X) = \{ A \in \msc K: (\beta, \gamma) \,\Xi\, \A \text{ for all }
(\beta, \gamma) \in X \}.
\]
Likewise, for $Y \subseteq \msc K$, let
\[
\Delta(Y) = \{ (\beta, \gamma) \in T^2: (\beta, \gamma) \,\Xi\, \A
\text{ for all } \A \in Y \}.
\]
We must check that the following hold for $X \subseteq T^2$ and
$Y \subseteq \msc K$.

\begin{enumerate}
\item $\kappa(X) \in \op{L_q}(\msc K)$,
\item $\Delta(Y) \in \op{Don}\,\S$,
\item $\Delta\kappa(X)=X$ if $X \in \op{Don}\,\S$,
\item $\kappa\Delta(Y)=Y$ if $Y \in \op{L_q}(\msc K)$.
\end{enumerate}

To prove (1), we show that $\kappa(X)$ is closed under substructures, direct 
products and ultraproducts.  Closure under substructures is immediate, and closure 
under direct products follows from the observation that if $h:\F \to \prod_i \A_i$
then $\ker h = \bigcap \ker \pi_i h$.  So let $\A_i \in \kappa(X)$ for
$i \in I$, let $U$ be an ultrafilter on $I$, and let $h:\F \to
\prod \A_i/U$ be a homomorphism.  Since $\F$ is free, we can find
$f:\F \to \prod \A_i$ such that $h=gf$ where $g: \prod \A_i \to \prod \A_i/U$
is the standard map.  Let $(\beta,\gamma) \in X$ with
$\beta = \Join \varphi_j$ and $\gamma = \Join \psi_k$,
where these are finite joins and each $\varphi$ and $\psi$ is of the form
$\op{con}_{\msc K}\, \alpha$ for an atomic formula $\alpha$.  Each $\alpha$ in turn
is of the form either $s \approx t$ or $R(\ol s)$.

Assume $\beta \leq \ker h$.  Then $h(\alpha_j)$ holds for each $j$, so that
for each $j$ we have $\{ i \in I : \pi_i f(\alpha_j) \} \in U$.
Taking the intersection,
$\{ i \in I : \forall j \ \pi_i f(\alpha_j) \} \in U$.
In other words, $\{ i \in I: \beta \leq \ker \pi_i f \} \in U$,
and so the same thing holds for $\gamma$.  Now we reverse the steps
to obtain $\gamma \leq \ker h$, as desired.  Thus $\kappa(X)$ is also
closed under ultraproducts, and it is a quasivariety.

To prove (2), let $Y \subseteq \msc K$.  It is straightforward that
$\Delta (Y) \subseteq T^2$ is a relation that is reflexive, transitive,
and contains $\geq$.  Moreover, if $(\beta,\gamma) \in \Delta(Y)$ and
$\beta \join \tau \leq \ker h$ for an appropriate $h$, then
$\gamma \join \tau \leq \ker h$, so $\Delta(Y)$ respects joins.

Again let $(\beta,\gamma) \in \Delta(Y)$ and $h:\F \to \A$ with $A \in Y$.
Let $\widehat{\vare} \in \widehat{\E}$ and assume that $\widehat{\vare}\beta
\leq \ker h$.  This is equivalent to $\beta \leq \ker h\vare$, as both mean
that $h\vare(\alpha_j)$ holds for all $j$, where 
$\beta = \Join \op{con}_{\msc K}\, \alpha_j$.  
Hence $\gamma \leq \ker h\vare$, yielding $\widehat{\vare}\gamma \leq \ker h$. 
Thus $\Delta(Y)$ is compatible with the operations of
$\widehat{\E}$.  We conclude that $\Delta(Y) \in \op{Don}\,\S$.

Next consider (4).  Given that $Y$ is a quasivariety, we want to show that
$\kappa\Delta(Y) \subseteq Y$.  Let $\A \in \kappa\Delta(Y)$, and let
$\&_j \, \alpha_j \!\implies\! \zeta$ be any quasi-identity holding in $Y$.
Set $\beta = \Join \op{con}_{\msc K}\,\alpha_j$ and $\gamma = 
\op{con}_{\msc K}\,\zeta$,
and let $h:\F \to \A$ be a homomorphism.
Then $(\beta,\gamma) \in \Delta(Y)$, whence as $\A \in \kappa\Delta(Y)$
we have $\beta \leq \ker h$ implies $\gamma \leq \ker h$.
Thus $\A$ satisfies the quasi-identity in question, which shows that
$\kappa\Delta(Y) \subseteq Y$, as desired.

Part (3) requires the most care (we must show that relations in
$\op{Don}\,\S$ correspond to theories of quasivarieties).  Given
$X \in \op{Don}\,\S$, we want to prove that $\Delta\kappa(X) \subseteq X$.

Let $(\mu,\nu) \in T^2 - X$.  Define a congruence $\theta$ on $\F$ as
follows.
\begin{align*}
\theta_0 &= \mu \\
\theta_{k+1} &= \theta_k \join \Join \{ \gamma | (\beta,\gamma) \in X
\text{ and } \beta \leq \theta_k \} \\
\theta &= \Join_k \theta_k .
\end{align*}
Let $\C= \F/\theta$.  We want to show that $\C \in \kappa(X)$ and that
$\nu \nleq \theta$.

\emph{Claim a.  If $\psi$ is compact and $\psi \leq \theta$, then
$(\mu,\psi) \in X$.} We prove by induction that if compact
$\psi \leq \theta_k$, then $(\mu,\psi) \in X$.  For $k=0$ this is trivial.

Assume the statement holds for $k$.
Suppose we have a finite collection of $(\beta_i,\gamma_i) \in X$ with
each $\beta_i \leq \theta_k$.  Let $\xi=\Join \beta_i$, so that
$\xi$ is compact and $\beta_i \leq \xi \leq \theta_k$.  Then
$(\xi,\beta_i) \in X$, so by transitivity $(\xi,\gamma_i)\in X$ for all $i$.
Hence $(\xi,\Join \gamma_i) \in X$.  Now inductively $(\mu,\xi) \in X$,
and so $(\mu,\Join \gamma_i) \in X$.

\emph{Claim b.  If $(\beta,\gamma) \in X$ and $\beta \leq \theta$, then
$\gamma \leq \theta$.}  This holds by construction and compactness.

\emph{Claim c.  $\F/\theta \in \kappa(X)$.}  Suppose $h:\F \to \F/\theta$,
$(\beta,\gamma) \in X$ and $\beta \leq \ker h$.  Let $f:\F \to \F/\theta$
be the standard map with $\ker f = \theta$.  There exists an endmorphism
$\vare$ of $\F$ such that $h=f\vare$.  Then, using Claim b and an argument
above,
\begin{align*}
\beta \leq \ker h = \ker f\vare
      &\implies \widehat{\vare}\beta \leq \ker f = \theta \\
      &\implies \widehat{\vare}\gamma \leq \theta = \ker f \\
      &\implies \gamma \leq \ker f\vare = \ker h.
\end{align*}

\emph{Claim d.  $(\mu,\nu) \notin \Delta\kappa(X)$.}  This is because
$\C \in \kappa(X)$ by Claim c and $\mu \leq \theta = \ker f$, while
$\nu \nleq \theta$ by Claim a.

This completes the proof of (3), and hence Theorem~\ref{rep2}.

Only a slight modification is required for Theorem~\ref{rep3}.
Consider the collection of quasivarieties $\msc C$ satisfying the 
conditions of the theorem:
\begin{enumerate}
\item $\msc C \subseteq \msc K$, and
\item $\msc C$ is determined relative to $\msc K$ by quasi-identities in
at most $n$ variables. 
\end{enumerate}
These properties mean that a structure $\C$ is in $\msc C$ if and only if
\begin{enumerate}
\item[$(1)'$] Every map $f_0:\omega \to \C$ extends to a homomorphism 
$f: \F_{\msc K}(\omega) \to \C$, and
\item[$(2)'$] Every map $g_0:n \to \C$ extends to a homomorphism  
$g: \F_{\msc C}(\omega) \to \C$.
\end{enumerate}
Quasivarieties satisfying conditions (1) and (2) are closed under
arbitrary joins, and thus under containment they form a lattice which
we will denote by $\L_q^n(\msc K)$. 
This is a complete join subsemilattice of $\L_q(\msc K)$; the 
corresponding dual lattice of theories is a complete meet subsemilattice
$\op{QTh}^n(\msc K)$ of $\op{QTh}(\msc K)$.
The proof of Theorem~\ref{rep2} gives us $\op{QTh}(\msc K)$ as the 
congruence lattice of a semilattice with operators obtained from
$\F_{\msc K}(\omega)$.
In view of condition $(2)'$, the same construction with $\F_{\msc K}(\omega)$
replaced throughout by $\F_{\msc K}(n)$ yields $\op{QTh}^n(\msc K)$.

\section{Interpretation}

The foregoing analysis is rather structural and omits the motivation,
which we supply here.  Let $\beta$ and $\gamma$ be elements of $\T$,
i.e., compact $\msc K$-congruences on the free structure $\mbf F$.
Then these are finite joins in $\op{Con}_{\msc K}\,\F$ of
principal congruences, say $\beta = \Join \op{con}_{\msc K}\,\alpha_j$ and 
$\gamma = \Join \op{con}_{\msc K}\,\zeta_k$, where each $\alpha$ and $\zeta$
is an atomic formula of the form $s \approx t$ or $R(\ol s)$.
The basic idea is that the congruence $\op{con}(\beta,\beta \join \gamma)$,
on the semilattice $\S$ of compact $\msc K$-congruences of $\F$ with
the endomorphisms as operators, should correspond to the
conjunction over the indices $k$ of the quasi-identities
$\&_j \, \alpha_j \!\implies\! \zeta_k$, and that furthermore the
quasi-equational consequences of combining implications (modulo the theory
of $\msc K$) behaves like the join operation in $\op{Con}\,\S$.
But $\beta \geq \gamma$ should mean that $\beta \!\implies\! \gamma$,
so it is really $\op{Don}\,\S$ that we want.  On the other hand,
all the nontrivial information is contained already in $\op{Eon}\,\S$,
and these three lattices are isomorphic.

Let $H(\beta,\gamma)$ denote the set of all quasi-identities 
$\&_j \, \alpha_j \!\implies\! \zeta_k$ where the atomic formulae
$\alpha_j$ and $\zeta_k$ come from join representations
$\beta = \Join \op{con}_{\msc K}\,\alpha_j$ and 
$\gamma = \Join \op{con}_{\msc K}\,\zeta_k$. 
Let $\Delta$ and $\kappa$ be the mappings from the Galois connection
in the proof of Theorem~\ref{rep2}.
The semantic versions of the structural results of the preceding section
then take the following form.

\begin{lm} \label{L9}
Let $\msc Q$ be a quasivariety contained in $\msc K$.
The set of all pairs $(\beta,\gamma)$ such that $\msc Q$ satisfies
each of the sentences in $H(\beta,\gamma)$ is in $\op{Don}\,\S$,
where $\S = \la \T,\join,0,\widehat{\E} \ra$ with
$\T$ the semilattice of compact congruences of\/ $\op{Con_{\msc K}}\,\F$, 
$\E = \op{End}\,\F$, and\/ $\mbf F = \mbf F_{\msc K}(\omega)$.
\end{lm}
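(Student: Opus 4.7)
The plan is to identify the semantic set in the statement with the relation $\Delta(\msc Q)$ from the Galois connection set up in the proof of Theorem~\ref{rep2}, and then appeal to the verification already carried out there.

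First I would unpack the definitions. Fix $(\beta,\gamma) \in T^2$ together with any join representations $\beta = \Join_j \op{con}_{\msc K}\,\alpha_j$ and $\gamma = \Join_k \op{con}_{\msc K}\,\zeta_k$. For any $\A \in \msc K$ and any homomorphism $h: \F \to \A$, the kernel $\ker h$ is itself a $\msc K$-congruence, so the inequality $\beta \leq \ker h$ amounts to $\alpha_j \in \ker h$ for every $j$, i.e., to $h$ satisfying every $\alpha_j$; likewise for $\gamma$ and the $\zeta_k$. Hence $(\beta,\gamma) \,\Xi\, \A$ holds precisely when $\A$ satisfies each quasi-identity $\&_j\,\alpha_j \!\implies\! \zeta_k$ for this pair of representations. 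Because that condition is intrinsic to $\beta$, $\gamma$, and $\A$, the same characterization holds for every choice of join representation, so letting $H(\beta,\gamma)$ range over all representations does not enlarge the condition. Quantifying over $\A \in \msc Q$ shows that the set in the lemma coincides with $\Delta(\msc Q)$.

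With this identification in hand, the lemma reduces to step~(2) in the proof of Theorem~\ref{rep2}, where it was checked that $\Delta(Y) \in \op{Don}\,\S$ for every $Y \subseteq \msc K$: reflexivity, transitivity, and containment of $\geq$ are immediate; compatibility with the join uses that $\beta \leq \ker h$ is preserved under joining with any $\tau$; and compatibility with each operator $\widehat{\vare}$ uses that $\widehat{\vare}\beta \leq \ker h$ is equivalent to $\beta \leq \ker h\vare$, both expressing that $h\vare$ satisfies each atomic generator of $\beta$.

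The only step that requires care is the representation-invariance asserted in the first paragraph; this is essentially the same fact that underlies the well-definedness of $\widehat{\vare}$ and is handled by Lemma~\ref{check}. Since it is already woven into the machinery of the previous section, no substantive obstacle remains.
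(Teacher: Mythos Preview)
Your proposal is correct and matches the paper's intended approach: Lemma~\ref{L9} is stated in the paper as a semantic restatement of step~(2) in the proof of Theorem~\ref{rep2}, with no separate argument given, and your identification of the set in question with $\Delta(\msc Q)$ together with the appeal to that step is exactly what is implicit there. The care you take over representation-independence (via the observation that $(\beta,\gamma)\,\Xi\,\A$ is intrinsic) is a welcome clarification of a point the paper leaves tacit.
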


\begin{lm} \label{L10}
Let\/ $Y$ be a collection of structures contained in $\msc K$.  
The following are equivalent.
\begin{enumerate}
\item $(\beta,\gamma) \in \Delta(Y)$.
\item Every $\A \in Y$ satisfies all the implications in $H(\beta,\gamma)$.
\item The quasivariety $\op{SPU}(Y)$ satisfies all the implications
in $H(\beta,\gamma)$.
\end{enumerate}
\end{lm}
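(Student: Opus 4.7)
My plan is to prove the equivalences in a cycle $(1) \Rightarrow (2) \Rightarrow (3) \Rightarrow (1)$, with everything resting on a single translation principle that I would establish at the outset. For any homomorphism $h:\F \to \A$ with $\A \in \msc K$, the image $h(\F)$ is a substructure of $\A$, so $\F/\ker h \in \msc K$ and in particular $\ker h$ is a $\msc K$-congruence. Consequently, for an atomic formula $\alpha$, the inequality $\op{con}_{\msc K}\,\alpha \leq \ker h$ holds precisely when $h$ satisfies $\alpha$ (i.e., $(hs,ht) \in (\ker h)_0$ when $\alpha$ is $s \approx t$, and $h(\ol s) \in R^{\A}$ when $\alpha$ is $R(\ol s)$), since $\op{con}_{\msc K}\,\alpha$ is by definition the least $\msc K$-congruence containing $\alpha$. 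Joining over the atomic summands, $\beta = \Join_j \op{con}_{\msc K}\,\alpha_j \leq \ker h$ iff every $\alpha_j$ is satisfied under $h$, and analogously for $\gamma$ in terms of the $\zeta_k$.

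For $(1) \Rightarrow (2)$ I would fix an implication $\&_j\, \alpha_j \implies \zeta_k$ in $H(\beta,\gamma)$, a structure $\A \in Y$, and an arbitrary assignment of variables in $\A$. Since $\F_{\msc K}(\omega)$ is $\msc K$-free, the assignment extends to a homomorphism $h:\F \to \A$. If all premises $\alpha_j$ hold under $h$, the translation yields $\beta \leq \ker h$; hypothesis (1) delivers $\gamma \leq \ker h$, whence each conclusion $\zeta_k$ also holds under $h$. As the assignment was arbitrary, $\A$ satisfies the implication.

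Step $(2) \Rightarrow (3)$ is the standard fact that quasi-identities, being universal Horn sentences, are preserved by the class operators $\op{S}$, $\op{P}$ and $\op{P}_U$, so any implication satisfied by every member of $Y$ is satisfied throughout $\op{SPU}(Y)$. For $(3) \Rightarrow (1)$, given $\A \in Y$ and $h:\F \to \A$ with $\beta \leq \ker h$, the translation gives that every $\alpha_j$ holds at $h$; since $\A \in Y \subseteq \op{SPU}(Y)$ satisfies each implication in $H(\beta,\gamma)$, each $\zeta_k$ also holds under $h$, and translating back gives $\gamma \leq \ker h$. The only non-routine ingredient is the translation principle itself, which merely unfolds the definition of $\op{con}_{\msc K}\,\alpha$ together with the closure of $\msc K$ under substructures; the remainder is bookkeeping parallel to what already appears in the proof of Theorem~\ref{rep2}, so I do not anticipate any serious obstacle.
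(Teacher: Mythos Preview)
Your proposal is correct, and it is essentially the approach the paper has in mind: the paper does not give a separate proof of this lemma but presents it as a semantic restatement of the definitions and calculations already appearing in the proof of Theorem~\ref{rep2} (in particular, the argument for part~(4) there is exactly your translation principle between $\beta \leq \ker h$ and the satisfaction of the atomic formulae under $h$). Your write-up simply makes that translation explicit and routes the cycle $(1)\Rightarrow(2)\Rightarrow(3)\Rightarrow(1)$ through the standard preservation of quasi-identities under $\op{S}$, $\op{P}$, $\op{P}_U$, which is precisely what the paper's structural proof of $\kappa(X)\in\op{L}_q(\msc K)$ encodes.
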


\begin{lm} \label{L11}
Let\/ $X \subseteq T^2$, where $\T$ is as in Lemma~\ref{L9}.  
The following are equivalent for a structure $\A$.
\begin{enumerate}
\item $\A \in \kappa(X)$.
\item For every pair $(\beta,\gamma) \in X$, $\A$ satisfies all the
quasi-identities of\/ $H(\beta,\gamma)$.
\end{enumerate}
\end{lm}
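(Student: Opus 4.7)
The plan is to unfold the relation $\Xi$ semantically, using the freeness of $\F = \F_{\msc K}(\omega)$. The key translation, already implicit in the proof of Lemma~\ref{check}, is that whenever $h : \F \to \A$ is a homomorphism with $\A \in \msc K$ and $\beta$ is a compact $\msc K$-congruence written as a finite join of principal congruences $\beta = \bigvee_j \op{con}_{\msc K}\,\alpha_j$ of atomic formulae, the condition $\beta \leq \ker h$ is equivalent to $\A \models h(\alpha_j)$ for every $j$. Moreover, because $\F$ is free on countably many generators, homomorphisms $h : \F \to \A$ are in bijection with assignments of those generators to elements of $\A$. Under these translations, $(\beta,\gamma) \,\Xi\, \A$ says precisely that every assignment satisfying all of the $\alpha_j$ also satisfies every $\zeta_k$ — which is exactly the conjunction over $k$ of the quasi-identities $\&_j\,\alpha_j \implies \zeta_k$ comprising $H(\beta,\gamma)$ for the chosen representations.

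To prove (1) $\implies$ (2), fix $(\beta,\gamma) \in X$ and a quasi-identity $\&_j\,\alpha_j \implies \zeta_k$ belonging to $H(\beta,\gamma)$, arising from representations $\beta = \bigvee \op{con}_{\msc K}\,\alpha_j$ and $\gamma = \bigvee \op{con}_{\msc K}\,\zeta_k$. Any assignment of the free generators to $\A$ determines a homomorphism $h : \F \to \A$ by freeness; if all the premises $h(\alpha_j)$ hold, then $\beta \leq \ker h$, so since $\A \in \kappa(X)$ we obtain $\gamma \leq \ker h$, and in particular $h(\zeta_k)$ holds. For (2) $\implies$ (1), assume $\A \in \msc K$, fix $(\beta,\gamma) \in X$ and a homomorphism $h : \F \to \A$ with $\beta \leq \ker h$, and choose any join representations of $\beta$ and $\gamma$. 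Then all $h(\alpha_j)$ hold; by (2), $\A$ satisfies each $\&_j\,\alpha_j \implies \zeta_k$ in $H(\beta,\gamma)$, so every $h(\zeta_k)$ holds, giving $\gamma \leq \ker h$ as required.

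There is no serious obstacle here — the argument is essentially the syntax-semantics dictionary already used in parts (1) and (4) of the proof of Theorem~\ref{rep2}, applied to a fixed pair $(\beta,\gamma)$ rather than to a family. The one point requiring a word of care is that $H(\beta,\gamma)$ ranges over \emph{all} finite join representations of $\beta$ and $\gamma$ into principal $\msc K$-congruences; the argument above works from any single pair of representations, so as a byproduct all quasi-identities in $H(\beta,\gamma)$ are equisatisfiable in each $\A \in \msc K$ and the ambiguity in the definition of $H(\beta,\gamma)$ is harmless. A second minor remark: since $\kappa(X) \subseteq \msc K$ by definition, the equivalence in (1) implicitly carries the assumption $\A \in \msc K$, which must be kept in mind for the direction (2) $\implies$ (1).
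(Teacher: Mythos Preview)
Your proof is correct and is precisely the unpacking of definitions the paper has in mind; the paper itself states Lemmas~\ref{L9}--\ref{L11} as the ``semantic versions of the structural results of the preceding section'' without separate proofs, and your argument is just the dictionary between $\beta \leq \ker h$ and satisfaction of the $\alpha_j$ already used in the proof of Theorem~\ref{rep2}. Your closing remark about the implicit hypothesis $\A \in \msc K$ is apt: $\kappa(X) \subseteq \msc K$ by definition, so the equivalence should be read for $\A \in \msc K$, exactly as you note.
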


As always, it is good to understand both the semantic and logical viewpoint.

\section{Congruence lattices of semilattices with operators} \label{eqint}

Let us examine more closely lattices of the form $\op{Con}(\S,+,0,\msc F)$.
The following theorem summarizes some fundamental facts about their structure.

\begin{thm} \label{basic}
Let $(\S,+,0,\msc F)$ be a semilattice with operators.
\begin{enumerate}
\item An ideal\/ $I$ of\/ $\S$ is the $0$-class of some congruence relation
if and only if\/ $f(I) \subseteq I$ for every $f \in \msc F$.
\item If the ideal\/ $I$ is $\msc F$-closed, then the least congruence
with $0$-class $I$ is $\eta(I)$, the semilattice congruence generated by $I$.
It is characterized by 
\[ 
x \ \eta(I)\  y \qquad\text{iff}\qquad x+i=y+i \text{ for some } i \in I. 
\]
\item There is also a greatest congruence with $0$-class $I$, which we will
denote by $\tau(I)$.  To describe this, let $\msc F^{\dagger}$ denote the
monoid generated by $\msc F$, including the identity function.  Then 
\[
x \ \tau(I)\  y \qquad\text{iff}\qquad (\forall h \in \msc F^{\dagger}) 
\ h(x) \in I \ifff h(y) \in I.
\]
\end{enumerate}
\end{thm}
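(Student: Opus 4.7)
The plan is to establish parts (2) and (3) by explicit construction, from which (1) follows immediately: the forward direction of (1) is a one-line observation, and the backward direction is witnessed by either of the congruences built in (2) or (3). So I would first dispatch the easy half of (1): if $I$ is the $0$-class of a congruence $\theta$ and $i \in I$, then $i \mathrel{\theta} 0$, so $f(i) \mathrel{\theta} f(0) = 0$ since each operator preserves $0$; hence $f(i) \in I$.

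For (2), I would define $\eta(I)$ by the stated formula and check, in order: reflexivity (take $i = 0 \in I$); symmetry (trivial); transitivity (if $x+i = y+i$ and $y+j = z+j$, add $j$ to the first and $i$ to the second to obtain $x + (i+j) = z + (i+j)$, using that $i+j \in I$ because $I$ is closed under $+$); compatibility with $+$ (immediate from adding $z$ to both sides); and compatibility with each $f \in \msc F$ (since $f(x+i) = f(x) + f(i)$ and $f(i) \in I$ by $\msc F$-closure of $I$). The $0$-class of $\eta(I)$ is $\{x : x + i = i \text{ for some } i \in I\} = \{x : x \leq i \text{ for some } i \in I\} = I$, since $I$ is a down-set. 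For minimality, given any congruence $\theta$ with $0$-class $I$ and a witness $x + i = y + i$ with $i \in I$, the relation $i \mathrel{\theta} 0$ yields $x \mathrel{\theta} x + i = y + i \mathrel{\theta} y$.

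For (3), define $\tau(I)$ as stated. Equivalence is transparent. Compatibility with each operator is also transparent because $\msc F^{\dagger}$ is closed under composition: if $g \in \msc F$ and $x \mathrel{\tau(I)} y$, then for any $h \in \msc F^{\dagger}$, $(hg)(x) \in I \ifff (hg)(y) \in I$, which is the required condition on $g(x), g(y)$. The delicate step is compatibility with $+$: given $x \mathrel{\tau(I)} y$ and $h \in \msc F^{\dagger}$, I want $h(x+z) \in I \ifff h(y+z) \in I$; since $h$ is a $(\join, 0)$-homomorphism this becomes $h(x) + h(z) \in I \ifff h(y) + h(z) \in I$, and in a join-semilattice ideal $a + b \in I$ iff both $a \in I$ and $b \in I$, so the equivalence reduces to the hypothesis on $h(x), h(y)$. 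The $0$-class equals $I$: taking $h = \mathrm{id}$ forces $x \in I$, and conversely $x \in I$ implies $h(x) \in I$ for every $h \in \msc F^{\dagger}$ because $\msc F$-closure of $I$ propagates to $\msc F^{\dagger}$-closure. For maximality, any congruence $\theta$ with $0$-class $I$ satisfies $x \mathrel{\theta} y \Longrightarrow h(x) \mathrel{\theta} h(y) \Longrightarrow (h(x) \in I \ifff h(y) \in I)$ for every $h \in \msc F^{\dagger}$, giving $\theta \subseteq \tau(I)$.

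The main technical obstacle is verifying compatibility of $\tau(I)$ with $+$; this step pivotally uses both that operators preserve joins and that a semilattice ideal absorbs a join precisely when it contains each summand. Everything else is a chain of routine checks, and once (2) or (3) is in hand the converse in (1) is automatic because each construction produces a congruence with $0$-class exactly $I$.
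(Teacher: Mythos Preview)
Your proof is correct and complete. The paper itself offers no proof beyond the remark that ``the proof of each part of the theorem is straightforward,'' so your write-up fills in exactly the routine verifications the authors had in mind, and there is nothing further to compare.
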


The proof of each part of the theorem is straightforward.
As a sample application, it follows that if $\S$ is a \emph{simple}
semigroup with one operator, then $|\S|=2$.

The maps $\eta$ and $\tau$ from Theorem~\ref{basic} induce operations 
on the entire congruence lattice $\op{Con}(\S,+,0,\msc F)$.  
If $\theta$ is a congruence with $0$-class $I$, 
define $\eta(\theta) = \eta(I)$ and $\tau(\theta) = \tau(I)$.
The map $\eta$ is known as the \emph{natural equa-interior operator} on
$\op{Con}(\S,+,0,\msc F)$.  This terminology will be justified below.

The natural equa-interior operator induces a partition of 
$\op{Con}(\S,+,0,\msc F)$.

\begin{thm} \label{equapartition}
Let\/ $\S = \la S,+,0,\msc F \ra$ be a semilattice with operators.
The natural equa-interior operator partitions $\op{Con}(\S)$ into intervals
$[ \eta(\theta),\tau(\theta) ]$ consisting of all the congruences with the
same $0$-class (which is an $\msc F$-closed ideal).
\end{thm}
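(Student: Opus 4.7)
The plan is to deduce Theorem~\ref{equapartition} almost entirely from Theorem~\ref{basic}, using that both $\eta$ and $\tau$ are definable from the $0$-class. First I would fix notation: for a congruence $\theta \in \op{Con}(\S,+,0,\msc F)$, let $I_\theta$ denote its $0$-class, and recall from part~(1) of Theorem~\ref{basic} that $I_\theta$ is an $\msc F$-closed ideal of $\S$. Conversely, every $\msc F$-closed ideal $I$ is realized as the $0$-class of at least one congruence (for instance $\eta(I)$, by part~(2)), so the assignment $\theta \mapsto I_\theta$ is a well-defined surjection from $\op{Con}(\S,+,0,\msc F)$ onto the collection of $\msc F$-closed ideals.

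Next I would verify that the fiber over a given $\msc F$-closed ideal $I$ coincides with the interval $[\eta(I), \tau(I)]$. For one inclusion, suppose $\theta$ is any congruence with $I_\theta = I$. Then $I$ is the $0$-class of $\theta$, so by the minimality clause in Theorem~\ref{basic}(2) we have $\eta(I) \subseteq \theta$, and by the maximality clause in Theorem~\ref{basic}(3) we have $\theta \subseteq \tau(I)$. For the reverse inclusion, let $\psi$ be any congruence with $\eta(I) \subseteq \psi \subseteq \tau(I)$. Since $\eta(I) \subseteq \psi$, we have $I \subseteq I_\psi$; and since $\psi \subseteq \tau(I)$, any $x \in I_\psi$ satisfies $x \,\tau(I)\, 0$, which by the explicit description in Theorem~\ref{basic}(3) (take $h = \operatorname{id} \in \msc F^{\dagger}$) forces $x \in I$. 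Hence $I_\psi = I$, as required.

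Finally, the partition assertion follows because each congruence $\theta$ lies in exactly one such interval, namely the one indexed by its own $0$-class $I_\theta$: distinct $\msc F$-closed ideals give disjoint intervals (the $0$-class is an invariant of the congruence), and by surjectivity of $\theta \mapsto I_\theta$ the intervals $[\eta(I), \tau(I)]$, as $I$ ranges over $\msc F$-closed ideals, cover all of $\op{Con}(\S,+,0,\msc F)$. The assignments $\theta \mapsto \eta(\theta) := \eta(I_\theta)$ and $\theta \mapsto \tau(\theta) := \tau(I_\theta)$ are therefore well-defined endomaps of the congruence lattice giving the left and right endpoints of the interval containing $\theta$.

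I do not anticipate a real obstacle: the entire content is bookkeeping around the two characterizations provided by Theorem~\ref{basic}. The only point requiring a hint of care is confirming that $I_\psi \subseteq I$ when $\psi \subseteq \tau(I)$, which is where one uses that the identity is included in $\msc F^{\dagger}$ so that the defining biconditional of $\tau(I)$ with $y = 0$ reads $h(x) \in I \iff h(0) \in I$ and in particular $x \in I \iff 0 \in I$, yielding $x \in I$.
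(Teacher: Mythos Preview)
Your proposal is correct and matches the paper's approach: the paper does not give an explicit proof of Theorem~\ref{equapartition}, treating it as an immediate consequence of Theorem~\ref{basic} after defining $\eta(\theta)=\eta(I_\theta)$ and $\tau(\theta)=\tau(I_\theta)$. Your write-up simply supplies the routine bookkeeping the paper omits, and the one slightly delicate point you flag (using $\operatorname{id}\in\msc F^{\dagger}$ to get $I_\psi\subseteq I$ from $\psi\subseteq\tau(I)$) is handled correctly.
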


The natural equa-interior operator on the congruence lattice of a 
semilattice with operators plays a role dual to that of the
equaclosure operator for lattices of quasivarieties.

Adaricheva and Gorbunov \cite{AG}, building on Dziobiak~\cite{WD},
described the natural equational closure operator on $Q$-lattices.
In the dual language of theories, the restriction of quasi-equational 
theories to atomic formulae gives rise to an equa-interior operator 
(defined below) on $\op{QTh}(\msc K)$.
Finitely based subvarieties of a quasi-variety $\msc K$ are given by 
quasi-identities that can be written as
$x \approx x \!\implies\! \&_k \, \beta_k$ for some atomic formulae $\beta_k$.  
By Lemma~\ref{L10}, the corresponding congruences are of the form 
$\op{con}(0,\theta)$ where $\theta$ is a compact $\msc K$-congruence 
on the free algebra $\F_{\msc K}(\omega)$.
More generally, subvarieties of $\msc K$ correspond to joins of these, i.e.,
to congruences of the form $\Join_{\theta \in I} \op{con}(0,\theta)$
for some ideal $I$ of the semilattice of compact $\msc K$-congruences.
Thus we should expect the map $\eta$ to be the analogous interior operator
on congruence lattices of semilattices with operators. 

We now define an equa-interior operator abstractly to have those
properties that we know to hold for the natural equa-interior operator
on the lattice of theories of a quasivariety.  One of our main goals, in
this section and the next two, is to extend this list of known properties
using the representation of the lattice of theories as the congruence
lattice of a semilattice with operators.

An \emph{equa-interior operator} on an algebraic lattice $\L$ is
a map $\eta: L \to L$ satisfying the following properties.
\begin{enumerate}
\item[(I1)] $\eta(x) \leq x$
\item[(I2)] $x \geq y$ implies $\eta(x) \geq \eta(y)$
\item[(I3)] $\eta^2(x)=\eta(x)$
\item[(I4)] $\eta(1)=1$
\item[(I5)] $\eta(x)=u$ for all $x \in X$ implies $\eta(\Join X)=u$
\item[(I6)] $\eta(x) \join (y \meet z) = (\eta(x) \join y) \meet (\eta(x) 
\join z)$
\item[(I7)] The image $\eta(\L)$ is the complete join subsemilattice of $\L$
generated by $\eta(\L) \cap \L_c$.
\item[(I8)] There is a compact element $w \in \L$ such that $\eta(w)=w$ and
the interval $[w,1]$ is isomorphic to the congruence lattice of a 
semilattice.  (Thus the interval $[w,1]$ is coatomistic.)
\end{enumerate}

Property (I5) means that the operation $\tau$ is implicitly defined by
$\eta$, \emph{via}  
\[ \tau(x) = \Join \{ z \in \L : \eta(z) = \eta(x) \} . \]
Thus $\tau(x)$ is the largest element $z$ such that $\eta(z)=\eta(x)$.
Likewise, properties (I1) and (I3) insure that 
$\eta(x)$ is the least element $z'$ such that $\eta(z')=\eta(x)$.
By (I2), if $\eta(x) \leq y \leq \tau(x)$, then $\eta(y)=\eta(x)$.
Thus the kernel of $\eta$,  defined by $x \approx y$ iff $\eta(x)=\eta(y)$,
is an equivalence relation that partitions $\L$ into disjoint intervals
of the form $[\eta(x),\tau(x)]$.  We will refer to this as the
\emph{equa-partition} of $\L$.

Now $\tau$ is not order-preserving in general.  However, 
it does satisfy a weak order property that can be useful.

\begin{lm} \label{ten}
Let\/ $\L$ be an algebraic lattice, and assume that $\eta$ satisfies conditions
(I1)--(I5).  Define $\tau$ as above.
Then for any subset $\{ x_j : j \in J \} \subseteq L$,
\[ 
\tau(\Meet_{j \in J} x_j) \geq \Meet_{j \in J} \tau(x_j).
\]
\end{lm}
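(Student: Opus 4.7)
Here is my plan for proving Lemma~\ref{ten}.

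Set $m = \Meet_{j \in J} x_j$ and $y = \Meet_{j \in J} \tau(x_j)$. The first thing I want is an explicit handle on when an element lies below $\tau(m)$. Since each $x_j$ lies in the set $\{z : \eta(z) = \eta(x_j)\}$ over which $\tau(x_j)$ is the join, we have $x_j \leq \tau(x_j)$, and so $m \leq y$. The target inequality $y \leq \tau(m)$ will therefore follow as soon as I exhibit $y$ as a member of the equa-class $[\eta(m),\tau(m)]$, that is, as soon as I verify $\eta(y) = \eta(m)$.

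The key auxiliary fact I will establish first is that $\eta(\tau(x)) = \eta(x)$ for every $x$. This is immediate from (I5): every element $z$ in the defining set of $\tau(x) = \Join\{z : \eta(z) = \eta(x)\}$ has the same $\eta$-value $\eta(x)$, so (I5) forces $\eta$ of the join to equal $\eta(x)$.

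With this in hand, the main computation is short. For each $j$, $y \leq \tau(x_j)$, so by (I2) and the auxiliary fact, $\eta(y) \leq \eta(\tau(x_j)) = \eta(x_j)$. Then (I1) gives $\eta(y) \leq \eta(x_j) \leq x_j$ for every $j$, so $\eta(y) \leq \Meet_j x_j = m$. Applying (I2) once more and using (I3), we obtain $\eta(y) = \eta(\eta(y)) \leq \eta(m)$. The reverse inequality $\eta(m) \leq \eta(y)$ is just (I2) applied to $m \leq y$, so $\eta(y) = \eta(m)$. Hence $y$ belongs to $\{z : \eta(z) = \eta(m)\}$, and by the very definition of $\tau(m)$ we conclude $y \leq \tau(m)$, as required.

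The main conceptual obstacle is not in the algebraic manipulation but in realizing which characterization of $\tau$ to use: one must resist the temptation to try to manipulate the binary or ternary meet via (I6), and instead observe that everything reduces to checking a single equality $\eta(y) = \eta(m)$ together with $m \leq y$. Once that reduction is made, only the basic properties (I1)--(I3) and (I5) are needed; (I6)--(I8) play no role.
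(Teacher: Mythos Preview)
Your proof is correct and follows essentially the same approach as the paper. Both arguments establish $\eta(\Meet_j \tau(x_j)) \leq \Meet_j x_j \leq \Meet_j \tau(x_j)$ (via $\eta\tau(x_j)=\eta(x_j)\leq x_j$ and $x_j \leq \tau(x_j)$), conclude that $\Meet_j \tau(x_j)$ lies in the equa-block of $\Meet_j x_j$, and hence is below $\tau(\Meet_j x_j)$; your version simply spells out the intermediate steps and the auxiliary fact $\eta\tau=\eta$ more explicitly.
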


\begin{proof}
We have
\[ \eta(\Meet \tau x_j) \leq \Meet \eta \tau x_j \leq \Meet x_j \leq 
\Meet \tau x_j \]
and that's all in one block of the equa-partition, while
$\Meet x_j \leq \tau (\Meet x_j)$, which is the top of the same block.
Thus $\Meet \tau x_j \leq \tau (\Meet x_j)$.
\end{proof}


Property $(I7)$ has some nice consequences.

\begin{lm} \label{re7}
Let $\eta$ be an equa-interior operator on an algebraic lattice $\L$.
\begin{enumerate}
\item The image $\eta(\L)$ is an algebraic lattice, and
$x$ is compact in $\eta(\L)$ iff $x \in \eta(\L)$ and $x$ is compact in $\L$.
\item If $X$ is up-directed, then $\eta(\Join X) = \Join \eta(X)$.
\end{enumerate}
\end{lm}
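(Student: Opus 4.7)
The plan is to derive both parts directly from property (I7), which asserts that $\eta(\L)$ is the complete join subsemilattice of $\L$ generated by $\eta(\L) \cap \L_c$. The crucial observation is that since $\eta(\L)$ is closed under arbitrary joins taken in $\L$, the join operation in $\eta(\L)$ agrees with the join in $\L$; hence there is no ambiguity when we speak of joins of elements of $\eta(\L)$.

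For part (1), I would first note that being a complete join subsemilattice of $\L$ (with top $1 = \eta(1)$ by I4), $\eta(\L)$ is automatically a complete lattice, where meets are computed as joins of lower bounds inside $\eta(\L)$. To pin down the compact elements, I would prove both inclusions. If $x \in \eta(\L) \cap \L_c$ and $x \leq \Join Y$ for $Y \subseteq \eta(\L)$, then this join coincides with the join in $\L$, so compactness in $\L$ yields a finite $Y_0 \subseteq Y$ with $x \leq \Join Y_0$; thus $x$ is compact in $\eta(\L)$. Conversely, if $x$ is compact in $\eta(\L)$, then by (I7) we can write $x = \Join Z$ with $Z \subseteq \eta(\L) \cap \L_c$; compactness in $\eta(\L)$ gives a finite $Z_0 \subseteq Z$ with $x = \Join Z_0$, and finite joins of elements of $\eta(\L) \cap \L_c$ lie in $\eta(\L) \cap \L_c$. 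Finally, (I7) then says every element of $\eta(\L)$ is a join of compact elements of $\eta(\L)$, so $\eta(\L)$ is algebraic.

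For part (2), one inequality is immediate: by (I2) each $\eta(x) \leq \eta(\Join X)$, so $\Join \eta(X) \leq \eta(\Join X)$. For the reverse, set $y = \eta(\Join X)$. By part (1), $y$ is the join of the compact elements of $\eta(\L)$ below it, each of which lies in $\eta(\L) \cap \L_c$. Take any such compact $c \leq y$. Then $c \leq y \leq \Join X$ and $c$ is compact in $\L$, so up-directedness of $X$ produces an $x \in X$ with $c \leq x$. Because $c \in \eta(\L)$ we have $c = \eta(c)$ by (I3), and applying (I2) gives $c = \eta(c) \leq \eta(x) \leq \Join \eta(X)$. Since $y$ is the join of these $c$, we conclude $y \leq \Join \eta(X)$.

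I expect the only subtle point to be the characterization of compact elements in part (1); specifically, one must carefully distinguish between compactness in $\L$ and in $\eta(\L)$ and invoke (I7) to rewrite the compact element as a finite join of elements of $\eta(\L) \cap \L_c$. Once that characterization is in hand, the directed-join formula in part (2) follows from a routine compact-element chase, exactly as for the usual Scott-continuity of interior operators on algebraic lattices.
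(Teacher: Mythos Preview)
Your argument is correct, and it is precisely the intended route: the paper states this lemma as an immediate consequence of property~(I7) and provides no proof of its own, so your derivation simply fills in the routine details the authors omitted.
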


For any quasivariety $\msc K$, the natural equa-interior operator on the
lattice of theories of $\msc K$ satisfies the eight listed basic properties.  
Congruence lattices of semilattices with operators come close.
For an ideal $I$ in a semilattice with operators, let $\op{con}_{\op{SL}}(I)$ 
denote the semilattice congruence generated by collapsing all the elements 
of $I$ to $0$.

\begin{thm} \label{equaint}
If\/ $\S = \la S, +, 0, \msc F \ra$ is a semilattice with operators,
then the map $\eta$ on $\op{Con}\,\S$ given by
$\eta(\theta) = \op{con}_{\op{SL}}(0/\theta)$ 
satisfies properties (I1)--(I7).  
\end{thm}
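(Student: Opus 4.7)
The plan is to verify the seven conditions in order, leaning heavily on Theorem~\ref{basic}. For any $\theta\in\op{Con}\,\S$ the zero class $0/\theta$ is an $\mathscr{F}$-closed ideal, $\eta(\theta)$ is by definition the least congruence with that zero class (characterized by $x\,\eta(\theta)\,y$ iff $x+i=y+i$ for some $i\in 0/\theta$), while $\theta$ itself is sandwiched between $\eta(0/\theta)$ and $\tau(0/\theta)$. With this dictionary, (I1), (I3) and (I4) are immediate: $\theta$ is one of the congruences with zero class $0/\theta$, so $\eta(\theta)\leq\theta$; $\eta(\theta)$ has the same zero class $0/\theta$, so $\eta^{2}(\theta)=\eta(\theta)$; and the top congruence has zero class $S$, forcing $\eta(1)=1$. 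Condition (I2) follows from the two monotonicities $\theta\mapsto 0/\theta$ and $I\mapsto\eta(I)$. For (I5), if $\eta(\theta)=u$ for every $\theta\in X$, then all members of $X$ share a common zero class $I$ and so lie below $\tau(I)$; hence $\bigvee X\leq\tau(I)$, which forces $0/\bigvee X=I$ and therefore $\eta(\bigvee X)=u$.

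For (I6) I would argue that $\op{Con}(\S,+,0,\mathscr{F})$ is distributive, whence (I6) is a special case of the distributive law. Meet is intersection, which obviously preserves $\mathscr{F}$-compatibility. For joins, if $\sigma$ denotes the join of $\theta_1,\theta_2\in\op{Con}(\S,+,0,\mathscr{F})$ taken in the lattice of pure semilattice congruences, then any $(\theta_1\cup\theta_2)$-chain witnessing $x\,\sigma\,y$ maps under $f\in\mathscr{F}$ to a chain witnessing $f(x)\,\sigma\,f(y)$, since each $\theta_k$ is $\mathscr{F}$-compatible; so $\sigma$ already belongs to $\op{Con}(\S,+,0,\mathscr{F})$. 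Consequently $\op{Con}(\S,+,0,\mathscr{F})$ is a $\{0,1\}$-sublattice of the (classically distributive) lattice of semilattice congruences on $\la S,+,0\ra$ and is itself distributive.

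For (I7) I would first identify the compact elements of $\eta(\op{Con}\,\S)$. The congruence of $(\S,+,0,\mathscr{F})$ generated by $(a_1,0),\dots,(a_n,0)$ must have zero class equal to the $\mathscr{F}$-closed ideal $J$ generated by $\{a_1,\dots,a_n\}$, and being the least such equals $\eta(J)$. Hence a member $\eta(I)$ of the image is compact in $\op{Con}\,\S$ precisely when $I$ is finitely generated as an $\mathscr{F}$-closed ideal. Every $\mathscr{F}$-closed ideal $I$ is then the directed union of its finitely generated $\mathscr{F}$-closed subideals $J\subseteq I$, and the $x+i=y+i$ characterization yields $\eta(I)=\bigvee\{\eta(J):J\subseteq I\text{ finitely generated}\}$, presenting $\eta(I)$ as a join of compact members of the image. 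Finally I verify closure of $\eta(\op{Con}\,\S)$ under arbitrary joins: for a family $\{\eta(I_\alpha)\}$, let $I$ be the semilattice ideal generated by $\bigcup I_\alpha$ (already $\mathscr{F}$-closed). Monotonicity gives $\bigvee\eta(I_\alpha)\leq\eta(I)$, and for the reverse inequality any generator $(i,0)$ of $\eta(I)$ is derivable from finitely many $(j_k,0)$ with $j_k\in I_{\alpha_k}$ using $i\leq j_1+\cdots+j_n$.

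The chief obstacle I foresee is (I7), and specifically the closure-under-arbitrary-joins clause needed to match the exact wording of the property. The remaining conditions reduce quickly either to the explicit formula of Theorem~\ref{basic} or to the distributivity of the lattice of semilattice congruences.
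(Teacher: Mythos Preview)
Your treatment of (I1)--(I5) and (I7) is fine and essentially matches the paper's implicit reasoning. The real problem is your argument for (I6).

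You claim that $\op{Con}(\S,+,0)$ is distributive and then deduce (I6) as a special case of the distributive law. But congruence lattices of semilattices are \emph{not} distributive in general; they are only meet-semidistributive. A small counterexample: let $S=\{0,a,b,c,1\}$ with $a+b=a+c=b+c=1$. For each atom $x$ let $\theta_x$ be the congruence with blocks $\{0,x\}$ and $S\setminus\{0,x\}$. Then $\theta_b\vee\theta_c=\nabla$, so $\theta_a\wedge(\theta_b\vee\theta_c)=\theta_a$, whereas $(\theta_a\wedge\theta_b)\vee(\theta_a\wedge\theta_c)$ has blocks $\{0\},\{a\},\{b,c,1\}$, strictly below $\theta_a$. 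Your sublattice argument is correct---$\op{Con}(\S,+,0,\mathscr F)$ is indeed a sublattice of $\op{Con}(\S,+,0)$---but that buys nothing once the ambient lattice fails distributivity.

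Property (I6) is genuinely weaker than distributivity: it only asserts that elements of the special form $\eta(\alpha)$ distribute over meets. This is exactly why the paper singles out (I6) as ``the hard one'' and proves it by a direct calculation. Given $(a,b)\in(\xi\vee\beta)\wedge(\xi\vee\gamma)$ with $\xi=\eta(\alpha)$, one takes $\beta$- and $\gamma$-chains from $a$ to $b$ interspersed with $\xi$-steps, lets $z$ be the join of all the witnesses $i\in 0/\alpha$ for those $\xi$-steps, and then adds $z$ throughout. The formula $x\,\xi\,y \iff x+i=y+i$ for some $i\in 0/\alpha$ collapses the $\xi$-steps to equalities, yielding $a\,\xi\,a+z\,\beta\,b+z\,\xi\,b$ and likewise with $\gamma$, whence $(a,b)\in\xi\vee(\beta\wedge\gamma)$. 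This is the missing idea you need; the special ``$x+i=y+i$'' description of $\eta(\alpha)$ from Theorem~\ref{basic} is doing essential work that a blanket appeal to distributivity cannot replace.
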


\begin{proof}
Property (I6) is the hard one to verify.  Let $\alpha$, $\beta$, $\gamma
\in \op{Con}\,\S$ and let $\xi = \eta(\alpha)$.  Then $x \,\xi\, y$ if
and only if there exists $z \in S$ such that $z \,\alpha\, 0$ and
$x + z = y + z$.  (This is the semilattice congruence but it's
compatible with $\msc F$.)  We want to show that
\[ (\xi \join \beta) \meet (\xi \join \gamma) \leq
\xi \join (\beta \meet \gamma). \]
Let $a$, $b \in \op{LHS}$.  Then there exist elements such that
\begin{align*}
a \,\beta\, c_1 \,\xi\, c_2 \,\beta\, c_3 &\dots b \\
a \,\gamma\, d_1 \,\xi\, d_2 \,\gamma\, d_3 &\dots b.
\end{align*}
Let $z$ be the join of the elements witnessing the above $\xi$-relations.
Then
\[ a \,\xi\, a + z \,\beta\, c_1 + z = c_2 + z \,\beta\,
c_3 + z = \dots b + z \,\xi\, b \]
so that $a \,\xi\,a + z \,\beta\, b + z \,\xi\, b$, and similarly
$a \,\xi\,a + z \,\gamma\, b + z \,\xi\, b$.  Thus $a$, $b \in
\op{RHS}$, as desired.
\end{proof}

Property (I8), on the other hand, need not hold in the congruence lattice
of a semilattice with operators.  The element $w$ of (I8),
called the \emph{pseudo-one}, in lattices of quasi-equational theories
corresponds to the identity $x \approx y$.  For an equa-interior operator
on a lattice $\L$ with $1$ compact, we can take $w=1$; in particular,
this applies when the semilattice has a top element, in which case we can 
take $w = \op{con}(0,1)$.  But in general, there may be no candidate for 
the pseudo-one.

Note that property (I8) implies that a lattice is dually atomic 
(or \emph{coatomic}).  Let $x<1$ in $\L$.  If $x \join w < 1$ then
it is below a coatom, while if $x \join w = 1$ then by the compactness
of $w$ there is a coatom above $x$ that is not above $w$.
In particular, the lattice of theories of a quasivariety is coatomic
(Corollary~5.1.2 of Gorbunov~\cite{VG2}).

Consider the semilattice $\mbf \Omega = (\omega,\join,0,p)$
with $p(0)=0$ and $p(x)=x-1$ for $x>0$.  Then $\op{Con}\,\mbf \Omega \cong
\omega + 1$, which has no pseudo-one (regardless of how $\eta$ is defined).
Thus $\op{Con}\,\mbf \Omega$ is not the dual of a $Q$-lattice. 
Likewise, $\op{Con}\,\mbf \Omega$ fails to be dually atomic.

In each of the next two sections we will discuss an additional property
of the natural equa-interior operator on semilattices with operators.
The point of this is that an algebraic lattice cannot be the dual of
a $Q$-lattice unless it admits an equa-interior operator satisfying
all these conditions.   Indeed, we should really consider the representation
problem in the context of pairs $(\L,\eta)$, rather than just the representation
of a lattice with an unspecified equa-interior operator.

For the sake of clarity, let us agree that the term \emph{equa-interior 
operator} refers to conditions (I1)--(I8) for the remainder of the paper,
even though we are proposing that henceforth a ninth condition 
should be included in the definition.

\section{A new property of natural equa-interior operators}
\label{eleven}

The next theorem gives a property of the natural equa-partition
on congruence lattices of semilattices with operators that need not hold 
in all lattices with an equa-interior operator.

\begin{thm} \label{prop}
Let\/ $\S = \la S,+,0,\msc F \ra$ be a semilattice with operators,
and let $\eta$, $\tau$ denote the bounds of the natural equa-partition
on $\op{Con}\,\S$.   If the congruences $\zeta$, $\gamma$, $\chi$
satisfy $\eta(\zeta) \leq \eta(\gamma)$ and $\tau(\chi) \leq \tau(\gamma)$,
then
\[ \eta (\eta(\zeta) \join \tau (\zeta \meet \chi)) \leq \eta(\gamma) .\]
\end{thm}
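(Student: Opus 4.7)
The plan is to reduce the conclusion to a $0$-class containment, then prove it by a lifting trick that converts the congruence $\tau(\zeta \meet \chi)$ into $\tau(\chi)$ so that the hypothesis $\tau(\chi) \leq \tau(\gamma)$ can be applied. Set $I := 0/\zeta$, $K := 0/\chi$, $J := 0/\gamma$, and $\pi := \eta(\zeta) \join \tau(\zeta \meet \chi)$. Since $\eta(\theta) = \op{con}_{\op{SL}}(0/\theta)$ depends only on the $0$-class, the desired inequality $\eta(\pi) \leq \eta(\gamma)$ is equivalent to $0/\pi \subseteq J$. The hypotheses give $I \subseteq J$ and $K \subseteq J$, and note $0/(\zeta \meet \chi) = I \cap K$.

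The key lemma I would prove is: \emph{if $y \in I$ and $(z, y) \in \tau(\zeta \meet \chi)$, then $z \in J$.} By compatibility and the idempotency $y + y = y$, the pair $(z + y, y) = (z + y, y + y)$ still lies in $\tau(\zeta \meet \chi)$. I next show that this pair lies in $\tau(\chi)$: using the characterization in Theorem~\ref{basic} and that $K$ is a down-closed ideal (so $h(z) + h(y) \in K$ iff $h(z), h(y) \in K$), the $\tau(\chi)$-condition for $(z + y, y)$ reduces to $\forall h \in \msc F^\dagger: h(y) \in K \Rightarrow h(z) \in K$. If $h(y) \in K$, then $\msc F$-closure of $I$ upgrades $h(y)$ to $h(y) \in I \cap K$, and then $(z, y) \in \tau(\zeta \meet \chi)$ forces $h(z) \in I \cap K \subseteq K$. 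Hence $(z + y, y) \in \tau(\chi) \leq \tau(\gamma)$; transitivity with $y \in I \subseteq J = 0/\tau(\gamma)$ yields $z + y \in J$, and down-closure of $J$ together with $z \leq z + y$ gives $z \in J$.

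To prove $0/\pi \subseteq J$ I strengthen the naive induction: for every $z$ in the $\pi$-orbit of $0$, there exist $y_z, a_z \in I$ such that $(z + a_z, y_z + a_z) \in \tau(\zeta \meet \chi)$. The base $z = 0$ takes $y_z = a_z = 0$. Inductively, if $z'\ \eta(\zeta)\ z$ via $z' + b = z + b$ for some $b \in I$, take $y_{z'} = y_z$ and $a_{z'} = a_z + b$ and use the $+$-compatibility of $\tau(\zeta \meet \chi)$ together with $z' + a_{z'} = z + a_{z'}$; if $z''\ \tau(\zeta \meet \chi)\ z$, keep $y_{z''} = y_z$ and $a_{z''} = a_z$ and combine compatibility with transitivity. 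Applying the lemma to $(z + a_z, y_z + a_z)$, with $y_z + a_z \in I$, gives $z + a_z \in J$, so $z \in J$ by down-closure of $J$.

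The main obstacle is that the obvious attempt to show $\tau(\zeta \meet \chi) \leq \tau(\gamma)$ outright fails, since $\tau$ is not monotone in its $0$-class. The essential move is to observe that elements of $0/\pi$ are not arbitrary witnesses of $\tau(\zeta \meet \chi)$: by accumulating the $\eta(\zeta)$-witnesses along the chain from $0$, each such $z$ can be placed in a $\tau(\zeta \meet \chi)$-relation with an element of $I$, which is exactly the configuration that activates the conversion $(z + y, y) \in \tau(\chi)$ and so invokes the hypothesis $\tau(\chi) \leq \tau(\gamma)$.
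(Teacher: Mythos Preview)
Your proof is correct and follows essentially the same approach as the paper's: reduce to the $0$-class containment, accumulate the $\eta(\zeta)$-witnesses so that each element of $0/\pi$ sits in a $\tau(\zeta\meet\chi)$-relation with an element of $I$, then lift that pair into $\tau(\chi)$ via the $\msc F$-closure of $I$ and apply $\tau(\chi)\leq\tau(\gamma)$. The only cosmetic difference is that the paper uses cumulative joins and a minimality-of-chain-length argument to collapse the alternating $\eta(\zeta)/\tau(\zeta\meet\chi)$ sequence to a single step, whereas you carry the accumulated witness $a_z$ explicitly through an induction.
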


\begin{proof}
Assume that $\zeta$, $\gamma$, $\chi$ satisfy the hypotheses, and let
$0/\zeta = Z$, $0/\gamma=C$ and $0/\chi=X$ be the corresponding ideals.
So $Z \subseteq C$ and $\tau(X) \subseteq \tau(C)$.
For notation, let $\alpha=\tau(Z \cap X)$.

We want to show that
$0/(\eta(Z) \join \alpha) \subseteq C$,
so let $w \in \op{LHS}$.  For any $z \in Z$ we have
$(z,w) \in \eta(Z) \join \alpha$.
Fix an element $z_0 \in Z$.
We claim that there exist elements $z^* \in Z$ and $w^* \in S$ such that
$z_0 \leq z^* \leq w^*$, $w \leq w^*$ and  $z^* \,\alpha\, w^*$.

There is a sequence
\[  z_0 = s_0  \ \eta(Z)\ s_1 \ \alpha\ s_2 \ \eta(Z)\ s_3 \ \dots\  s_k=w. \]
Let $t_j = s_0 + \dots +s_j$ for $0 \leq j \leq k$.  Thus we obtain
\[  z_0 = t_0  \ \eta(Z)\ t_1 \ \alpha\ t_2 \ \eta(Z)\ t_3 \ \dots\  t_k \]
with
\[  t_0 \leq t_1 \leq t_2 \leq t_3 \leq \dots \leq t_k.  \]
Put $z'=t_1$ and $w'=t_k$, so that with $z_0 \leq z' \in Z$ and $w \leq w'$.  
Moreover, we may assume that $k$ is minimal for such a sequence.

If $k>2$, then
$z' = t_1 \ \alpha\ t_2 \ \eta(Z)\ t_3 \ \alpha\ t_4$.  By the definition
of $\eta(Z)$, there exists $u \in Z$ such that $t_2+u = t_3+u$.
Joining with $u$ yields the shorter sequence
\[  z'' = t_1+u \ \alpha \ t_2+u = t_3+u \ \alpha\ t_4+u \ \dots \]
contradicting the minimality of $k$.
Thus $k \leq 2$, which yields the conclusion of the claim
with $z^* = t_1$ and $w^* = t_2$.  

Next, we claim that $(z^*,w^*) \in \tau(X)$.  This follows from the sequence
of implications:
\begin{align*}
f(z^*) \in X &\implies f(z^*) \in X \cap Z \\
             &\implies f(w^*) \in X \cap Z \\
             &\implies f(w^*) \in X \\
             &\implies f(z^*) \in X
\end{align*}
which hold for any $f \in \msc F$, using the $\msc F$-closure of $Z$,
$(z^*,w^*) \in \tau(X \cap Z)$ and $z^* \leq w^*$.

Thus $(z^*,w^*) \in \tau(X) \subseteq \tau(C)$.
But $z^* \in Z \subseteq C = 0/\tau(C)$, whence $w^* \in C$ and $w \in C$,
as desired.
\end{proof}

\begin{figure}[htbp]
\begin{center}
\includegraphics[height=2.0in,width=6.0in]{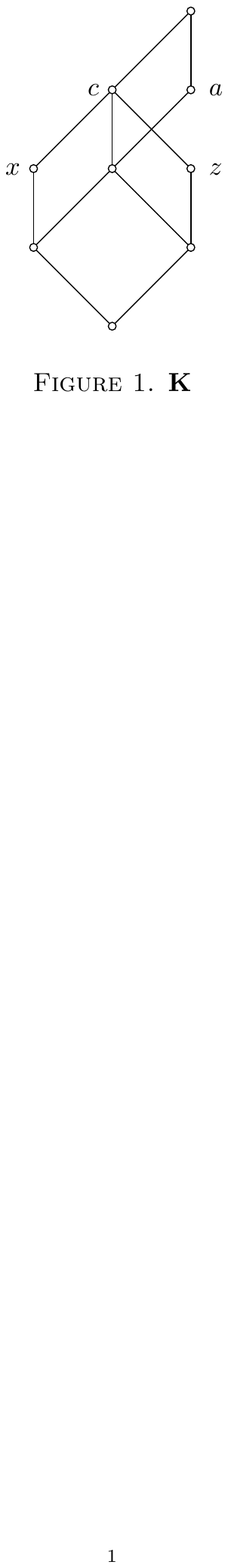}
\caption{$\mbf K$}{\label{figx9}}
\end{center}
\end{figure}

For an application of this condition, consider the lattice $\mbf K$ in 
Figure \ref{figx9}.
It is straightforward to show that $\mbf K$ has a unique equa-interior
operator, with $h(t)=0$ if $t \leq a$ and $h(t)=t$ otherwise.
Indeed, any equa-interior operator on $\mbf K$ must have
$h(a) \join (x \meet z) = (h(a) \join x) \meet (h(a) \join z)$, from
which it follows easily that $h(a)=0$.  But then we cannot have
$h(x)=0$, else $h(1) = h(a \join x) = 0$, a contradiction.  Thus
$h(x)=x$ and symmetrically $h(z)=z$.  This in turn yields that
$h(c)=c$.

But $\mbf K$ is not the congruence lattice of a semilattice
with operators.  The only candidate for the equa-interior operator
fails the condition of Theorem~\ref{prop} with the substitution
$\zeta \mapsto z$, $\gamma \mapsto c$, $\chi \mapsto x$.
Therefore $\mbf K$ is not the lattice of theories of a quasivariety.
We could have also derived this latter fact by noting that $\mbf K$
is not dually biatomic: in $\mbf K$ we have $a \geq x \meet z$ which 
is not refinable to a meet of coatoms.

On the other hand, $\mbf K$ can be represented as a filterable sublattice 
of $\op{Con}(\B_3,+,0)$, where $\B_3$ is the Boolean lattice on three atoms.  
(See Appendix~II for this terminology.)  Indeed, if
the atoms of $\B_3$ are $p$, $q$, $r$ then we can take
\begin{align*}
a &\mapsto  [0] \ [p,q,r,p \join q, p \join r, q \join r,1] \\
c &\mapsto \op{con}(0,p \join q)  \\
x &\mapsto \op{con}(0,p)  \\
z &\mapsto \op{con}(0,q)  .
\end{align*}
We will pursue the comparison of congruence lattices and lattices of
algebraic sets in the appendices.

Taking a cue from this example, we continue investigating the
consequences of the condition of Theorem~\ref{prop}.
Recall that, whenever $\eta$ satisfies (I1)--(I5), we have
$\eta(y)=\eta(x)$ iff $\eta(x) \leq y \leq \tau(x)$.  
The condition can be written as follows, where 
we use the fact that $\eta(u) \leq c$ iff $\eta(u) \leq \eta(c)$.
\[ (\dagger) \quad \tau(x) \leq \tau(c) \ \&\ \eta(z) \leq c \implies
\eta(\eta(z) \join \tau(x \meet z)) \leq c   \]
This holds for the natural equa-interior operator on congruence lattices
of semilattices with operators, and we want to see how it applies to pairs
$(\L,h)$ where $h$ is an arbitrary equa-interior operator on $\L$.

There is a two-variable version of the condition, which
is obtained by putting $c = \eta(z) \join \tau(x)$.
\[ (\ddagger) \qquad \eta(\eta(z) \join \tau(x \meet z)) \leq
   \eta(z) \join \tau(x)   \]
This appears to be slightly weaker than $(\dagger)$.

Consider the Boolean lattice $\B_3$ with atoms $x$, $y$, $z$
and the equa-interior operator with $h(y)=0$ and $h(t)=t$ otherwise.
Then $(\B_3,h)$ fails the condition $(\ddagger)$, though 
$\B_3$ is a dual $Q$-lattice with another equa-interior operator  
by Theorem~\ref{one-elt}.

There are two additional conditions on equa-interior operators
that are known to hold in the duals of $Q$-lattices:  bicoatomicity 
and the four-coatom condition.  (See Section~5.3 of Gorbunov \cite{VG2}.)
Unfortunately, congruence lattices of
semilattices with operators need not be coatomic
(there is an example in the discussion of property (I8) in Section~\ref{eqint}),
but duals of $Q$-lattices are, so we will impose this as an extra condition.
In that case, we will see that $(\dagger)$ implies both of these properties.

A lattice $\L$ is \emph{bicoatomic} (or \emph{dually biatomic}) 
if whenever $p$ is a coatom of $\L$ and $p \geq u \meet v$ properly,
then there exist coatoms $c \geq u$ and $d \geq v$ such that $p \geq c \meet d$.

\begin{thm} \label{bicoatom}
Let\/ $\L$ be a coatomic lattice and let $h$ be an equa-interior
operator on $\L$.  If\/ $(\L,h)$ satisfies property $(\dagger)$, then $\L$
is bicoatomic.
\end{thm}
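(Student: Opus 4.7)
The plan is to construct the required coatoms directly, using the coatomicity of $\mathbf{L}$ to get candidates and invoking property $(\dagger)$ at the critical step. I would begin with a structural preliminary: every coatom $q$ of $\mathbf{L}$ satisfies $\tau(q) = q$. Indeed, $q \leq \tau(q)$ always, and if $\tau(q) > q$ then $\tau(q) = 1$ by coatomness of $q$, forcing $h(q) = h(1) = 1$ by (I5) and contradicting $h(q) \leq q < 1$. In particular, $\tau(p) = p$, so the coatom $p$ is eligible as the value of $c'$ in $(\dagger)$.

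Given $p$ a coatom with $u \wedge v \leq p$, $u \not\leq p$, and $v \not\leq p$, I would first perform the reduction $u \mapsto u \vee h(p)$, $v \mapsto v \vee h(p)$: property (I6) gives $(u \vee h(p)) \wedge (v \vee h(p)) = h(p) \vee (u \wedge v) \leq p$, and $u \vee h(p), v \vee h(p) \not\leq p$ persists since $u, v \not\leq p$, so the hypothesis is preserved and now $u, v \geq h(p)$. Then by coatomicity I would pick a coatom $c \geq u$, observing that $c \neq p$ and $c \geq h(p)$. The candidate for $d$ is any coatom above $y := (c \wedge p) \vee v$: if such $d$ exists by coatomicity, it satisfies $d \geq v$ and $d \geq c \wedge p$, giving $c \wedge d \geq c \wedge p$, whereupon the remaining inequality $c \wedge d \leq p$ must be checked.

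Both $y < 1$ (which guarantees existence of $d$) and $c \wedge d \leq p$ would follow from applications of $(\dagger)$ with $c' = p$. The $\tau$-hypothesis $\tau(x) \leq \tau(p) = p$ is satisfied for $x = c \wedge p$ because, after the reduction, $c \wedge p$ lies in the block $[h(p), p]$ of $p$, so $\tau(c \wedge p) = p$. For $y < 1$ I would argue by contradiction, applying $(\dagger)$ with $z = v$, $x = c \wedge p$, $c' = p$: the resulting conclusion $h(h(v) \vee \tau(v \wedge c \wedge p)) \leq p$, combined with the assumption $(c \wedge p) \vee v = 1$ and property (I6), should ultimately force $v \leq p$, contradicting the hypothesis. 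For $c \wedge d \leq p$, a symmetric application of $(\dagger)$ using $c$ and $d$ in the block structure of $p$ forces $c \wedge d$ into $[h(p), p]$.

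The main obstacle will be verifying the $h$-hypothesis $h(z) \leq p$ of $(\dagger)$: although $v \geq h(p)$ after the reduction, $h(v)$ sits somewhere in $[h(p), v]$ and may exceed $p$. Addressing this requires a case split on the position of $h(v)$ with respect to the block of $p$, making essential use of the equa-partition structure of Theorem~\ref{equapartition} together with the weak order property of $\tau$ recorded in Lemma~\ref{ten}. It is here, rather than in the two-variable form $(\ddagger)$, that the full three-variable strength of $(\dagger)$ is required, since the refinement of the meet $u \wedge v \leq p$ into coatoms demands a third coatom (namely $p$ itself) to anchor the block structure.
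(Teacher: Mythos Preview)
Your reduction to $u,v \geq h(p)$ and the observation $\tau(q)=q$ for coatoms are correct and match the paper. The plan fails, however, at the intended application of $(\dagger)$ with target $c'=p$. With your choices $x=c\wedge p$ and $z=v$, after the reduction each of $c$, $p$, $v$ dominates $h(p)$, so $x\wedge z=c\wedge p\wedge v$ lies in the block $[h(p),p]$ and hence $\tau(x\wedge z)=p$. The conclusion of $(\dagger)$ then reads $h\bigl(h(v)\vee p\bigr)\leq p$; but the hypothesis you must assume is precisely $h(v)\leq p$, so this collapses to $h(p)\leq p$, which is vacuous. Thus the ``main obstacle'' you flag is not a side case to be handled by a split: when $h(v)\leq p$ the application yields nothing, and when $h(v)\not\leq p$ the hypothesis of $(\dagger)$ fails outright. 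Neither Lemma~\ref{ten} nor a case split on the block of $h(v)$ repairs this, and the same vacuity objection applies to the second application you sketch for $c\wedge d\leq p$.

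The paper's remedy is to change the \emph{target} of $(\dagger)$ from $p$ to the chosen coatom $c$, and to choose $c$ more carefully: take $x=h(p)\vee u$, pick $c$ a coatom above $\tau(x)$ (not merely above $u$; one checks $\tau(x)<1$), set $z=h(p)\vee v$ and $z'=c\wedge z$. Then both hypotheses $\tau(x)\leq c=\tau(c)$ and $h(z')\leq z'\leq c$ hold by construction. One still gets $\tau(x\wedge z')=p$, but now the conclusion $h\bigl(h(z')\vee p\bigr)\leq c$ carries content: assuming $c\wedge z\not\leq p$, one shows $h(z')\not\leq p$ (else $h(z')=h(p)$ and (I5) forces $h(1)=h(z'\vee p)=h(p)<1$), so $h(z')\vee p=1$ and the conclusion gives $1\leq c$, a contradiction. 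This yields $c\wedge z\leq p$; one then iterates the argument with the roles swapped to promote $z$ to a coatom $d\geq v$. Incidentally, in this setup even the two-variable form $(\ddagger)$ already suffices, contrary to your closing remark.
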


\begin{proof}
Assume $1 \succ p \geq u \meet v$ properly in $\L$.
We want to find elements $c$, $z$ with $1 \succ c \geq u$, $z \geq v$,
and $c \meet z \leq p$.  (Then apply the argument a second time.)

Note that $p \geq \eta(p) \join (u \meet v) = (\eta(p) \join u) \meet
(\eta(p) \join v)$.  Put $x = \eta(p) \join u$ and $z=\eta(p) \join v$.
Let $1 \succ c \geq \tau(x)$ and note $\tau(x) \geq x \geq u$.

Suppose $c \meet z \nleq p$.  Put $z' = c \meet z$.  Then $\eta(z') \nleq p$,
for else since $\eta(p) \leq z'$ we would have $\eta(z') = \eta(p) =
\eta(z' \join p) = \eta(1) = 1$, a contradiction.  Now we apply $(\dagger)$.
Surely $\tau(x) \leq c$ and $\eta(z') \leq z' \leq c$.  Moreover
$\eta(p) \leq z' \meet x \leq z \meet x \leq p$ whence 
$\eta(z' \meet x)=\eta(p)$, and thus $\tau(z' \meet x)=p$.  But then
$\eta(\eta(z') \join \tau(x \meet z')) = \eta(\eta(z') \join p) = \eta(1)=1$,
again a contradiction.  Therefore $c \meet z \leq p$, as desired.
\end{proof}

The dual of the four-coatom condition played a significant role in
the characterization of the atomistic, algebraic $Q$-lattices.
This too is a consequence of property $(\dagger)$.   For coatoms $a$, $d$ we
write $a \sim d$ to indicate that $|\uparrow(a \meet d)|=4$, in which case 
the filter $\uparrow(a \meet d)$ is exactly $\{ 1,a,d,a \meet d \}$.
A lattice $\L$ with an equa-interior operator $\eta$ satisfies the 
\emph{four-coatom condition} if, whenever $a$, $b$, $c$, $d$ are coatoms
of $\L$ such that $a \sim d$, $\eta(a) \nleq d$, $\eta(c) \leq d$ and 
$\eta(c) = \eta(a \meet b)$, then $\eta(c) = \eta(b \meet d)$.

\begin{thm} \label{h7}
The four-coatom condition holds in a lattice with an 
equa-interior operator $\eta$ satisfying $(\dagger)$.  
\end{thm}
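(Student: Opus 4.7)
My plan is to reduce the four-coatom condition to showing $\eta(b \meet d) \leq c$, and then to apply condition $(\dagger)$ (Theorem~\ref{prop}) with a carefully chosen substitution. A preliminary observation is that each of the coatoms $a$, $c$, $d$ has $\eta$-image strictly below $1$: by (I1), $\eta(a) \leq a < 1$ for the coatom $a$, while $\eta(c),\eta(d) \leq d < 1$. Hence their blocks, being intervals containing a coatom but not $1$, are forced to satisfy $\tau(a)=a$, $\tau(c)=c$, and $\tau(d)=d$.

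From the hypothesis $\eta(c) = \eta(a \meet b)$ I immediately get $\eta(c) \leq a \meet b \leq b$; combined with $\eta(c) \leq d$ this yields $\eta(c) \leq b \meet d$ (whence $\eta(c) \leq \eta(b \meet d)$ by monotonicity) and $\eta(c) \leq a \meet b \meet d \leq a \meet b \leq \tau(c) = c$, which places $a \meet b \meet d$ in the block $[\eta(c), c]$ so that $\tau(a \meet b \meet d) = c$. The remaining task is the reverse inequality $\eta(b \meet d) \leq \eta(c)$, which by the note ``$\eta(u) \leq c$ iff $\eta(u) \leq \eta(c)$'' amounts to $\eta(b \meet d) \leq c$.

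To leverage the hypotheses $a \sim d$ and $\eta(a) \nleq d$, I would establish the key technical fact $\tau(a \meet d) \leq d$. Since $\tau(a \meet d) \geq a \meet d$, the four-element filter $\uparrow(a \meet d) = \{a \meet d, a, d, 1\}$ confines $\tau(a \meet d)$ to that set. The value $1$ is ruled out (it would force $\eta(a \meet d) = \eta(1) = 1$, hence $a \meet d = 1$) and the value $a$ is ruled out (it would give $\eta(a) = \eta(a \meet d) \leq d$, contradicting $\eta(a) \nleq d$); so $\tau(a \meet d) \in \{a \meet d, d\}$, and in either case $\tau(a \meet d) \leq d$.

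The closing step is to apply Theorem~\ref{prop} with $\zeta = b \meet d$, $\chi = a \meet d$, and $\gamma = d$. The two hypotheses $\eta(\zeta) \leq \eta(\gamma)$ and $\tau(\chi) \leq \tau(\gamma)$ follow from $\eta(b \meet d) \leq d$ (together with the note above) and the previous paragraph, respectively. Since $\zeta \meet \chi = a \meet b \meet d$ with $\tau(a \meet b \meet d) = c$, the conclusion becomes $\eta(\eta(b \meet d) \join c) \leq d$. If $\eta(b \meet d) \nleq c$, then the coatomhood of $c$ forces $\eta(b \meet d) \join c = 1$, whence $1 = \eta(1) \leq d < 1$, a contradiction. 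Therefore $\eta(b \meet d) \leq c$, yielding the desired equality. The main obstacle is identifying the precise substitution; the choice $\chi = a \meet d$ is what imports the information $\tau(a \meet d) \leq d$, coming from $a \sim d$ and $\eta(a) \nleq d$, into $(\dagger)$.
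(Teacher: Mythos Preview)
Your proof is correct and follows essentially the same route as the paper's: both establish $\tau(a\meet b\meet d)=c$ and $\tau(a\meet d)\leq d$, then apply $(\dagger)$ (equivalently Theorem~\ref{prop}) with the element $d$ playing the role of $\gamma$ and $a\meet d$ playing the role of $\chi$, deriving a contradiction from $\eta(b\meet d)\nleq c$. The only cosmetic difference is that the paper takes $z=\eta(b\meet d)$ whereas you take $\zeta=b\meet d$; since $\eta(\eta(b\meet d))=\eta(b\meet d)$ and both $(a\meet d)\meet\eta(b\meet d)$ and $a\meet b\meet d$ lie in the block $[\eta(c),c]$, the two substitutions yield the same conclusion.
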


\begin{proof}
As $\eta(c) \leq b$, $d$ is given, we need that $\eta(b \meet d) \leq c$.
Supposing not, substitute $x=a \meet d$, $z = \eta(b \meet d)$, and the
element $d$ into $(\dagger)$.  Note that $\tau(a \meet d) \ne a$ has
$\eta(a) \nleq d$.  Thus $\tau(a \meet d) \leq d$, and of course
$\eta(b \meet d) \leq d$.  But we also have $\eta(c) \leq a \meet b \meet d
\leq a \meet b$ and $\eta(a \meet b) = \eta(c)$, so
$\eta(\eta(b \meet d) \join \tau(a \meet b \meet d)) =
 \eta(\eta(b \meet d) \join c) = \eta(1) = 1$, a contradiction.
Thus $\eta(b \meet d) \leq c$, as desired.
\end{proof}

\section{Coatomistic congruence lattices and a stronger property}
\label{coatomistic}

One of the most intriguing hypotheses about lattices of quasivarieties 
is formulated for atomistic lattices. Dually, it can be expressed as follows:
\begin{quote}
\emph{Can every coatomistic lattice of quasi-equational theories be represented 
as $\op{Con}(\S,+,0)$, i.e., without operators?}
\end{quote}
This hypothesis is shown to be valid in the case when the lattice of 
quasi-equational theories is dually algebraic \cite{ADG}.
The problem provides a motivation for investigating which coatomistic lattices
can be represented as lattices of equational theories, or congruence lattices of
semilattices, with or without operators.

Consider the class $\mathcal{M}$ of lattices dual to $\op{Sub}_f\,\M$, where 
$\M$ is an infinite semilattice with $0$, and $\op{Sub}_f\,\M$ is the lattice
of finite subsemilattices of $\M$, topped by the semilattice $\M$ itself.

Evidently, lattices in $\mathcal{M}$ are coatomistic, and they are algebraic
but not dually algebraic. Besides, it is straightforward to show that they 
cannot be presented as $\op{Con}(\S,+,0)$.
Thus, it would be natural to ask whether such lattices can be presented as 
$\op{Con}(\S,+,0,\msc F)$, for a non-empty set of operators on $\S$.
In many cases the answer is ``no" simply because there might be no 
equa-interior operator.  For example, let $\M$ be a meet semilattice such
that the dual of $\op{Sub}_f\,\M$ admits an equa-interior operator.
If $a$ is an element of $\M$ that can be expressed as a meet in
infinitely many ways, then $\eta(a)=0$ by Lemma~\ref{june2} below.
Hence $\M$ can contain at most one such element.

It turns out to be feasible to show that certain lattices from $\mathcal{M}$,
that \emph{do} admit an equa-interior operator, still cannot be represented as 
$\op{Con}(\S,+,0,\msc F)$.
The crucial factor here is to understand the behavior of infinite meets 
of coatoms, or more generally infinite meets of elements $\tau(x)$, in the 
congruence lattice of a semilattice with operators.  The restriction 
given by Theorem~\ref{twelve} can be expressed as a ninth basic property 
of the natural equa-interior operator (as it implies $(\dagger)$).

{\bf Aside:}  Coatoms arise naturally in another context, that does not
make the lattice coatomistic.  Suppose $\S = \la S,+,0,\msc F \ra$ has
the property that for each $\msc F$-closed ideal $I$, every $f \in \msc F$,
and every $x \in S$,
\[  f(x) \in I \implies x \in I .\]
Then the congruence $\tau(I)$ partitions $S$ into $I$ and $S-I$, and hence
is a coatom.  In particular, this property holds whenever
\begin{itemize}
\item $\msc F$ is empty, or
\item $\msc F$ is a group, or
\item every $f \in \msc F$ is increasing, i.e., $x \leq f(x)$ for all $x \in S$.
\end{itemize}
In all these cases, $\tau(\theta)$ is a coatom for every 
$\theta \in \op{Con}\,\S$.  We will be particularly concerned with the
case when $\msc F$ is a group in Part~II \cite{LQET2}.

\begin{thm} \label{twelve}
Let $\S = \la S,+,0,\msc F \ra$ be a semilattice with operators,
$I$ an arbitrary index set, and
$\chi$, $\gamma$, and $\zeta_i$ for $i \in I$ congruences on $\S$.
The natural equa-interior operator on $\op{Con}\,\S$ has the following
property:  if\/ $\eta(\chi) \leq \gamma$ and 
$\Meet_{i \in I} \tau(\zeta_i) \leq \tau(\gamma)$, then
\[  
\eta(\eta(\chi) \join \Meet_{i \in I}\tau( \chi \meet \zeta_i )) \leq \gamma . 
\]
\end{thm}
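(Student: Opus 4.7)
The plan is to adapt the proof of Theorem~\ref{prop} almost verbatim, deferring the intersection over $i$ to the very end. Write $X = 0/\chi$, $C = 0/\gamma$, and $Y_i = 0/\zeta_i$; by Theorem~\ref{basic}(1) these are $\msc F$-closed ideals of $\S$. The hypothesis $\eta(\chi) \leq \gamma$ gives $X \subseteq C$, while the second hypothesis $\Meet_i \tau(\zeta_i) \leq \tau(\gamma)$ will be used only at the end. Set $\alpha_i = \tau(\chi \meet \zeta_i)$, whose $0$-class is $X \cap Y_i$, and let $\alpha = \Meet_i \alpha_i$. Since the $0$-class of $\eta(\theta)$ equals that of $\theta$ for every congruence $\theta$, it suffices to prove $0/(\eta(\chi) \join \alpha) \subseteq C$.

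Fix $w$ in this $0$-class and any $x_0 \in X$. Then $(x_0, w)$ lies in the lattice-theoretic join $\eta(\chi) \join \alpha$, so it is witnessed by a \emph{finite} alternating chain $x_0 = s_0 \,\eta(\chi)\, s_1 \,\alpha\, s_2 \,\eta(\chi)\, \dots\, s_k = w$ (reflexive steps inserted if necessary). Passing to the accumulated sums $t_j = s_0 + \cdots + s_j$ makes the chain monotonically increasing, with $t_1 \in X$ by ideal-closure of $X$. The same minimum-length reduction used in Theorem~\ref{prop}---if consecutive $\alpha$-steps are separated by an $\eta(\chi)$-step with witness $u \in X$, then joining $u$ throughout collapses the sandwich into a single $\alpha$-step---forces $k \leq 2$, producing $x^* \in X$ and $w^* \in S$ with $x_0 \leq x^* \leq w^*$, $w \leq w^*$, and $(x^*, w^*) \in \alpha$.

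Since $\alpha \leq \alpha_i$, we have $(x^*, w^*) \in \tau(X \cap Y_i)$ for each $i$. For every $f \in \msc F^\dagger$ the chain of equivalences
\[
f(x^*) \in Y_i \iff f(x^*) \in X \cap Y_i \iff f(w^*) \in X \cap Y_i \iff f(w^*) \in Y_i
\]
holds by the argument in Theorem~\ref{prop}: the first uses $\msc F$-closure of $X$ together with $x^* \in X$; the second uses $(x^*, w^*) \in \tau(X \cap Y_i)$; the third combines downward-closure of the ideal $Y_i$ with the monotonicity of $f$ (operators preserve joins, hence order) applied to $x^* \leq w^*$. Hence $(x^*, w^*) \in \tau(\zeta_i)$ for every $i$, and intersecting gives $(x^*, w^*) \in \Meet_i \tau(\zeta_i) \leq \tau(\gamma)$. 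Since $x^* \in X \subseteq C = 0/\tau(\gamma)$, we deduce $w^* \in C$, and then $w \in C$ because $C$ is a downset. The only delicate step is the minimum-length reduction inherited from Theorem~\ref{prop}; the new point is simply that a single $\alpha$-step already encodes $\tau(X \cap Y_i)$-behaviour for \emph{all} $i$ at once, which is exactly what makes the terminal intersection $\Meet_i \tau(\zeta_i) \leq \tau(\gamma)$ applicable.
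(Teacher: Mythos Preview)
Your proof is correct and follows essentially the same approach as the paper, which simply abstracts the two halves of the argument from Theorem~\ref{prop} into a separate lemma (Lemma~\ref{june4}) before applying them with $\alpha = \Meet_i \tau(\chi \meet \zeta_i)$ and then intersecting over $i$ at the end. One small expository wrinkle: your displayed chain of $\iff$'s is not literally a chain of equivalences (the backward direction of the third step, $f(w^*) \in Y_i \Rightarrow f(w^*) \in X \cap Y_i$, is not what your explanation establishes), but since what you actually need is only $f(x^*) \in Y_i \iff f(w^*) \in Y_i$, and the reverse implication follows directly from $f(x^*) \leq f(w^*)$ and downward closure of $Y_i$, the argument goes through---it would read more cleanly as a cycle of implications, as in the proof of Theorem~\ref{prop}.
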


For the proof, it is useful to write down abstractly the two parts of the 
argument of the proof of Theorem~\ref{prop}.  

\begin{lm} \label{june4}
Let $\alpha$, $\chi$, $\zeta \in \op{Con}(\S,+,0,\msc F)$ and let 
$X$ be the $0$-class of $\chi$.
\begin{enumerate}
\item If\/ $u \in X$ and $(u,v) \in \chi \join \alpha$, then there exist
elements $u^*$, $v^*$ with $u \leq u^* \in X$, $v \leq v^*$, $u^* \leq v^*$,
and $(u^*,v^*) \in \alpha$.
\item If\/ $u \in X$, $u \leq v$ and $(u,v) \in \tau(\chi \meet \zeta)$,
then $(u,v) \in \tau(\zeta)$.
\end{enumerate}
\end{lm}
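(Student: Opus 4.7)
My plan is to prove both parts of Lemma~\ref{june4} by abstracting the two combinatorial moves that appear in the proof of Theorem~\ref{prop}, keeping $\chi$ (rather than $\eta(\chi)$) on the left of the join throughout.

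For part (1), I would start with a witness sequence $u = s_0, s_1, \dots, s_k = v$ with each consecutive pair in $\chi \cup \alpha$, chosen to be of minimal length and strictly alternating between $\chi$-steps and $\alpha$-steps. Condensing via $t_j = s_0 + \cdots + s_j$ as in the proof of Theorem~\ref{prop}, I obtain a monotone chain $u = t_0 \leq t_1 \leq \cdots \leq t_k$ with $t_k \geq v$, and each $(t_{j-1}, t_j)$ in the same relation as $(s_{j-1}, s_j)$ by compatibility with $+$. The key structural input is that $X$, being the $0$-class of a congruence on a semilattice, is a downward-closed ideal; in particular, if $t_{j-1} \in X$ and $(t_{j-1}, t_j) \in \chi$, then $t_j \in X$ as well, since $t_{j-1}\,\chi\,0$ and $t_{j-1}\,\chi\,t_j$ give $t_j\,\chi\,0$ by transitivity. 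This allows me to absorb any leading $\chi$-step into the starting point, so by minimality the first step is an $\alpha$-step. A parallel reduction to the one in Theorem~\ref{prop} -- joining both sides of an interior $\chi$-step with a witnessing element of $X$ in order to coalesce the two adjacent $\alpha$-steps into a single $\alpha$-step -- contradicts minimality unless the trimmed sequence has length at most two. From this truncated sequence I read off $u^* \in X$ (possibly after augmenting by a witness from $X$) and $v^*$ satisfying $u \leq u^* \leq v^*$, $v \leq v^*$, and $(u^*, v^*) \in \alpha$.

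For part (2), I would invoke the explicit description of $\tau(I)$ from Theorem~\ref{basic}(3): $(x, y) \in \tau(I)$ if and only if, for every $h \in \msc F^{\dagger}$, $h(x) \in I \iff h(y) \in I$. Applying this with $I = X \cap Z$ (where $Z = 0/\zeta$), the hypothesis gives the biconditional for $X \cap Z$, and I want the biconditional for $Z$. For the direction $h(u) \in Z \Rightarrow h(v) \in Z$, I use that $X$ is $\msc F$-closed and $u \in X$, hence $h(u) \in X$, so $h(u) \in X \cap Z$; by hypothesis $h(v) \in X \cap Z \subseteq Z$. For the reverse direction, each operator preserves the join, so $u \leq v$ gives $h(u) \leq h(v)$; since $Z$ is a downward-closed ideal, $h(v) \in Z$ forces $h(u) \in Z$.

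The main obstacle lies in the length-reduction step of part (1). In Theorem~\ref{prop}, the collapse relies on the specific form of $\eta(\chi)$-steps, which yield an equation $t_{j-1} + i = t_j + i$ for some $i \in X$ that can be added throughout the sequence to fuse two adjacent $\alpha$-steps. For a general $\chi$-step in the interior of the condensed sequence, I will need to verify that the two endpoints, lying in a common $\chi$-class, admit an analogous manipulation -- the natural candidate being to exploit that the ``difference'' $t_j - t_{j-1}$ is the join of a suffix of the $s_i$'s whose contribution to the sequence is already controlled by membership in $X$. Getting this exact $X$-witness from the raw fact that $(t_{j-1}, t_j) \in \chi$ (which only says they share a $\chi$-class) is the delicate point; the rest of the argument then follows the template of Theorem~\ref{prop} verbatim.
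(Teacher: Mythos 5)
Your part (2) is correct and is precisely the paper's argument (it is the displayed chain of implications in the proof of Theorem~\ref{prop}, with the roles of the two ideals arranged appropriately): one direction uses the $\msc F$-closure of $X$ together with $u \in X$, the other uses $u \leq v$, the monotonicity of each $h \in \msc F^{\dagger}$, and the fact that the $0$-class of $\zeta$ is a down-set.

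Part (1) is where the problem lies, and the ``delicate point'' you flag is not merely delicate --- it is fatal in the generality you are attempting. The claim with an arbitrary congruence $\chi$ on the left of the join is false. Take $\S$ to be the four-element join semilattice $\{0,a,b,1\}$ with incomparable atoms $a$, $b$ and $a+b=1$, with $\msc F = \varnothing$; let $\chi$ be the congruence with classes $\{0\}$ and $\{a,b,1\}$, and $\alpha$ the congruence with classes $\{0,a\}$ and $\{b,1\}$. Then $X=\{0\}$ and $\chi \join \alpha$ is the full congruence, so $(0,b) \in \chi \join \alpha$ with $0 \in X$; but the only candidate for $u^*$ is $0$, every $v^* \geq b$ lies in $\{b,1\}$, and $(0,v^*) \notin \alpha$ for such $v^*$. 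So no reduction argument can succeed for general $\chi$. What makes the paper's argument in Theorem~\ref{prop} work is that the congruence joined with $\alpha$ there is $\eta(Z)$, the \emph{least} congruence with $0$-class $Z$, and its explicit description ($x \ \eta(Z)\ y$ iff $x+i=y+i$ for some $i \in Z$, Theorem~\ref{basic}(2)) is exactly what hands you the element $i \in Z$ to join across the sequence and fuse two adjacent $\alpha$-steps. For a general $\chi$-step $(t_{j-1},t_j)$ no such witness in $X$ need exist, and your proposed candidate (a join of a suffix of the $s_i$'s) does not help, since those $s_i$ need not belong to $X$. The repair is to prove (1) with $\eta(X)$ in place of $\chi$ --- which is all that is used in the proof of Theorem~\ref{twelve}, where the lemma is invoked for pairs in $\eta(\chi) \join \Meet_i \tau(\chi \meet \zeta_i)$ --- and then your argument runs verbatim as in Theorem~\ref{prop}: after condensing, the first step stays in $X$ (as you observe), minimality forces $k \leq 2$, and $u^*=t_1$, $v^*=t_2$ work.
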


Now, under the assumptions of the theorem, let $u \in X$ and 
$(u,v) \in \eta(\chi) \join \Meet \tau( \chi \meet \zeta_i )$,
so that $v$ is in the $0$-class of the LHS.
Then by Lemma~\ref{june4}(1), there exist $u^*$, $v^*$ with 
$u \leq u^* \in X$, $v \leq v^*$, $u^* \leq v^*$ and
$(u^*,v^*) \in \Meet \tau( \chi \meet \zeta_i )$.
Then $(u^*,v^*) \in  \tau( \chi \meet \zeta_i )$ for every $i$,
whence by Lemma~\ref{june4}(2) $(u^*,v^*) \in \tau(\zeta_i)$ for every $i$,
so that $(u^*,v^*) \in \Meet \tau(\zeta_i)$.  

Let $X$ and $C$ denote the $0$-classes of $\chi$ and $\gamma$, 
respectively.  By assumption, we have $u^* \in X \subseteq C$,
and $(u^*,v^*) \in \Meet \tau(\zeta_i) \leq \tau(\gamma)$, so $v^* \in C$
as well.  \emph{A fortiori}, $v \in C$, as desired.

This proves Theorem~\ref{twelve}.  Thus we obtain the ninth fundamental
property of the natural equa-interior operator on the congruence lattice 
of a semilattice with operators.
\begin{enumerate}
\item[(I9)]  For any index set $I$, if $\eta(x) \leq c$ and 
$\Meet \tau(z_i) \leq \tau(c)$, then
$\eta(\eta(x) \join \Meet_{i \in I}\tau( x \meet z_i )) \leq c$. 
\end{enumerate}
As before, there is also a slightly simpler (and weaker) variation:
\[
(I9') \qquad \eta(\eta(x) \join \Meet_{i \in I}\tau( x \meet z_i )) \leq 
\eta(x) \join \Meet \tau(z_i) .
\]

Clearly, if $|I|=1$ then property (I9) reduces to property $(\dagger)$.  
In fact, for $I$ finite, $(\dagger)$ implies (I9).  
But for $I$ infinite, property (I9) seems to carry a rather different sort
of information, as we shall see below.

Consider the case when $|I|=2$; the argument for the general finite case
is similar.   Assume that $\eta(x) \leq c$ and $\tau(y) \meet \tau(z) \leq
\tau(c)$.   Using (I6), $(\dagger)$, and the fact that $\eta(u \meet v) =
\eta(\eta(u) \meet \eta(v))$, we calculate
\begin{align*}
\eta(\eta(x) \join (\tau(x \meet y) \meet \tau(x \meet z))) &=
\eta((\eta(x) \join (\tau(x \meet y)) \meet (\eta(x) \join \tau(x \meet z))))\\
&\leq  \eta((\eta(x) \join (\tau(y)) \meet (\eta(x) \join \tau(z))))\\
&= \eta(\eta(x) \join (\tau(y) \meet \tau(z)))\\
&\leq c 
\end{align*}
as desired.

With property (I9) as a tool-in-hand, we turn to a
thorough investigation of the (dual) dependence relation for
coatoms of $\op{Con}(\S,+,0,\msc F)$; see Theorems~\ref{june5} 
and~\ref{june6} below.
Throughout the remainder of this section, $\chi$, $\zeta$ and $\alpha$
will denote distinct coatoms of the congruence lattice.
Repeatedly, we use the basic property of equa-interior operators that
$\eta x \join (y \meet z) = (\eta x \join y) \meet (\eta x \join z)$.
Our goal is to generalize (to whatever extent possible) the following
property of finite sets of coatoms.

\begin{thm} \label{june1}
Let\/ $\L$ be a lattice with an equa-interior operator.
If for coatoms $x, z_1, \dots, z_k, a_1, \dots, a_k$ of\/ $\L$ we have
$x \meet z_i \leq a_i$ properly, then $\eta x \join \Meet_{i=1}^k z_i =1$.
\end{thm}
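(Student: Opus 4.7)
My plan is to use the mixed distributive law (I6) to decompose the join over the meet into a meet of joins, then use the fact that each $z_i$ is a coatom to reduce the statement to a collection of pairwise claims, and finally derive each pairwise claim by contradiction using the injectivity of $\eta$ on distinct coatoms (a consequence of (I5)) together with the witness $a_i$.

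First, iterating (I6), I can rewrite
\[
\eta(x) \join \Meet_{i=1}^{k} z_i \;=\; \Meet_{i=1}^{k} \bigl(\eta(x) \join z_i\bigr).
\]
Since each $z_i$ is a coatom and $\eta(x) \join z_i \geq z_i$, each factor on the right equals either $z_i$ (when $\eta(x) \leq z_i$) or $1$ (otherwise). The meet therefore equals $1$ precisely when $\eta(x) \not\leq z_i$ for every $i$, so it suffices to establish this condition for each fixed $i$.

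Fix $i$ and suppose, for contradiction, that $\eta(x) \leq z_i$. From (I1) we also have $\eta(x) \leq x$, so $\eta(x) \leq x \meet z_i$, and the hypothesis yields $\eta(x) \leq a_i$. A key preliminary observation, which follows from (I5), is that $\eta$ is injective on distinct coatoms: if $\eta(y) = \eta(y')$ for two distinct coatoms, then $\eta(y \join y') = \eta(1) = \eta(y)$ would force $\eta(y) = 1$, contradicting $\eta(y) \leq y < 1$. Reading ``$x \meet z_i \leq a_i$ properly'' as the assertion that $a_i$ is a third coatom, distinct from both $x$ and $z_i$, and applying monotonicity together with $\eta^2 = \eta$ to $\eta(x) \leq z_i$ and $\eta(x) \leq a_i$, this injectivity yields strict inequalities $\eta(x) < \eta(z_i)$ and $\eta(x) < \eta(a_i)$.

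The main obstacle is the final step: converting this configuration into an outright contradiction. My plan is to apply (I6) with $y = a_i$ to the pair $(x, z_i)$:
\[
\eta(a_i) \join (x \meet z_i) \;=\; \bigl(\eta(a_i) \join x\bigr) \meet \bigl(\eta(a_i) \join z_i\bigr).
\]
The left-hand side is bounded above by $a_i < 1$, so the right-hand side cannot equal $1$, forcing $\eta(a_i) \leq x$ or $\eta(a_i) \leq z_i$. A symmetric application of (I6) with $y = z_i$ to $(x, a_i)$ constrains $\eta(z_i)$ analogously, while (I6) with $y = x$ applied to $(z_i, a_i)$ gives $\eta(x) \join (z_i \meet a_i) = z_i \meet a_i$, so that $\eta(x) \leq z_i \meet a_i$. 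Combining these identities with the strict inequalities $\eta(x) < \eta(z_i), \eta(a_i)$ and the fact that $a_i$ is a third coatom distinct from $x$ and $z_i$ should produce the desired incompatibility. The crux — pinpointing the exact combination of (I5) and (I6) that forces the clash — is the technical heart of the argument; the distinctness hypothesis on $a_i$ is essential, since without a third coatom above $x \meet z_i$ (as in $\B_2$) the analogous statement fails.
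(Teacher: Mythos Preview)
Your overall plan matches the paper's exactly: use (I6) to reduce to showing $\eta(x) \nleq z_i$ for each $i$, then argue by contradiction. Where you fall short is in closing that contradiction, which you yourself flag as ``the crux'' left unfinished. The subsequent (I6) applications you propose (with $y = z_i$ and $y = x$) are detours that do not lead anywhere; the proof actually finishes one line after the identity you already wrote down.

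The gap is this. From
\[
\eta(a_i) \join (x \meet z_i) \;=\; (\eta(a_i) \join x) \meet (\eta(a_i) \join z_i) \;\leq\; a_i
\]
you extract only the disjunction $\eta(a_i) \leq x$ \emph{or} $\eta(a_i) \leq z_i$. In fact each holds separately: if $\eta(a_i) \nleq x$ then $\eta(a_i) \join x = 1$, so the right-hand side collapses to $\eta(a_i) \join z_i \leq a_i$, forcing $z_i \leq a_i$, which contradicts the hypothesis that $a_i$ is a third coatom. Hence $\eta(a_i) \leq x$, and symmetrically $\eta(a_i) \leq z_i$. (This is precisely the content of the paper's Lemma~\ref{june2}.) Now the contradiction is immediate from the injectivity you already established: $\eta(a_i) \leq x$ gives $\eta(a_i) = \eta^2(a_i) \leq \eta(x)$, while your hypothesis $\eta(x) \leq a_i$ gives $\eta(x) \leq \eta(a_i)$, so $\eta(x) = \eta(a_i)$ with $x \neq a_i$ distinct coatoms.
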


The proof uses the next lemma.

\begin{lm} \label{june2}
Suppose $x \meet z \leq a$ properly for coatoms in a lattice with an
equa-interior operator.  Then $\eta a \leq x \meet z$, and thus
\begin{enumerate}
\item $\tau(x \meet z) = a$,
\item $\eta x \nleq a$,
\item $\eta x \nleq z$.
\end{enumerate}
\end{lm}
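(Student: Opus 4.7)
The plan is to first establish the main inequality $\eta a \leq x \meet z$ and then derive (1), (2), (3) as quick consequences. By the obvious symmetry in $x$ and $z$, it suffices to prove $\eta a \leq x$. Suppose for contradiction that $\eta a \nleq x$. Since $x$ is a coatom, this forces $\eta a \join x = 1$, so (I6) gives
\[
\eta a \join (x \meet z) = (\eta a \join x) \meet (\eta a \join z) = \eta a \join z.
\]
But $x \meet z \leq a$ and $\eta a \leq a$, so the left side is $\leq a$, and therefore $z \leq \eta a \join z \leq a$; as $z, a$ are distinct coatoms this is impossible.

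Before turning to (1)--(3) I would record the auxiliary fact that every coatom $c$ satisfies $\tau c = c$: indeed $\tau c \in \uparrow c = \{c,1\}$, and $\tau c = 1$ would force $\eta c = \eta(1) = 1$ (by (I4)), whence $c = 1$ by (I1). So $\tau a = a$, $\tau x = x$, $\tau z = z$. Item (1) is then immediate: from $\eta a \leq x \meet z \leq a$, properties (I2) and (I3) give $\eta(x \meet z) = \eta a$, so $x \meet z$ lies in the equa-block $[\eta a, a]$, and $\tau(x \meet z) = \tau a = a$. For (3), if $\eta x \leq z$ then $\eta x \leq x \meet z \leq a$, so (3) follows once (2) is proved.

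So the substantive remaining step is (2). Assume $\eta x \leq a$ and split on whether $\eta x \leq z$. If $\eta x \nleq z$, coatomicity of $z$ gives $\eta x \join z = 1$, so by (I6)
\[
\eta x \join (x \meet z) = (\eta x \join x) \meet (\eta x \join z) = x \meet 1 = x,
\]
and then $x \leq a$, contradicting distinctness of the coatoms $x$ and $a$. If instead $\eta x \leq z$, then $\eta x \leq x \meet z$, and (I2) and (I3) force $\eta(x \meet z) = \eta x$; combining with (1) gives $\eta x = \eta a$, so $x$ and $a$ sit in the same equa-block $[\eta a, \tau a]$, and both coincide with the top $\tau a = a = \tau x = x$ of that block, again a contradiction. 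This second case is the only nontrivial obstacle: a naive application of (I6) does not expose a direct conflict, and one is forced to use (1) together with the coatom-equals-its-own-$\tau$ identity to collapse the equa-blocks of $x$ and $a$.
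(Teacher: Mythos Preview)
Your proof is correct and follows essentially the same line as the paper's: the argument for $\eta a \leq x \meet z$ via (I6) is identical, and (1) and (3) are handled the same way. For (2), however, your case split on whether $\eta x \leq z$ is unnecessary. The paper argues uniformly: assuming $\eta x \leq a$, one has $\eta a = \eta(x \meet z) \leq \eta x$ by (I2), so $\eta x$ lies in the equa-block $[\eta a, a]$, whence $\eta x = \eta a$; then (I5) gives $\eta(x \join a) = \eta a$, i.e.\ $\eta(1) = \eta a = 1$, a contradiction. Your second case rediscovers this conclusion (phrased via $\tau x = \tau a$ rather than (I5)), while your first case is a correct but superfluous detour---contrary to your closing remark, no split is needed.
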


\begin{proof}
If say $\eta a \nleq x$, then $\eta a \join x =1$, and using (I6) we would have
\begin{align*}
\eta a \join z &= (\eta a \join x) \meet (\eta a \join z) \\
               &= \eta a \join (x \meet z) \leq a
\end{align*}
whence $z \leq a$, a contradiction.  So $\eta a \leq x$, and symmetrically
$\eta a \leq z$.  Since $\eta a \leq x \meet z \leq a = \tau a$, we have
$\tau(x \meet z) = a$.

It follows that we cannot have $\eta x \leq a$, else
\[ \eta a = \eta (x \meet z) \leq \eta x \leq a, \]
implying that $\eta x = \eta a$, and thus 
$\eta a = \eta(x \join a) = \eta 1 = 1$ by (I5) and (I4), a contradiction.
Therefore also $\eta x \nleq z$, else $\eta x \leq x \meet z \leq a$. 
\end{proof}

The theorem now follows immediately, because
\[ \eta x \join \Meet_{i=1}^k z_i = \Meet_{i=1}^k (\eta x \join z_i) =1 .\]

The property of Theorem~\ref{june1} can fail when there are infinitely
many $z_i$'s, even in the congruence lattice of a semilattice.
Let $\mbf Q$ be the join semilattice in Figure~\ref{figx12}.
Consider the ideals
\begin{align*}
X &= \{ 0,u_1,u_2,u_3,\dots \} \\
Z_i &= \id v_i \\
A_i &= \id u_i
\end{align*}
for $i \in \omega$, and let $\chi=\tau(X)$, $\zeta_i = \tau(Z_i)$ and
$\alpha_i = \tau(A_i)$.
Then an easy calculation shows that $\Meet \zeta_i = 0$, and the infinite
version of the property of the theorem fails.

\begin{figure}[htbp]
\begin{center}
\includegraphics[height=2.8in,width=6.0in]{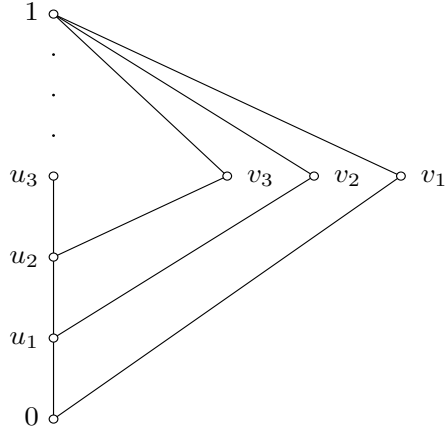}
\caption{$\op{Con}(\S,+,0)$ does not satisfy the infinite analogue
of Theorem~\ref{june1}.}{\label{figx12}}
\end{center}
\end{figure}

Nonetheless, we shall show that a couple of infinite versions do hold.

\begin{thm} \label{june5}
Let\/ $\L$ be a lattice with an equa-interior operator satisfying
property (I9).
If for coatoms $a$, $x$ and $z_i$ $(i \in I)$ of\/ $\L$ we have
$x \meet z_i \leq a$ properly, then $\eta x \join \Meet_{i \in I} z_i =1$.
\end{thm}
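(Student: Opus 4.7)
The plan is to reduce the claim to a single application of property (I9), using Lemma~\ref{june2} to evaluate the two meets that appear in its statement.

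First I would apply Lemma~\ref{june2} to each pair $(x,z_i)$: since $x \meet z_i \leq a$ properly and $x$, $z_i$, $a$ are all coatoms, the lemma yields $\tau(x \meet z_i) = a$ and $\eta x \nleq a$. Taking the meet over $i$ collapses to $\Meet_{i \in I} \tau(x \meet z_i) = a$. I would also note that $\tau(z_i) = z_i$ for each $i$: since $z_i$ is a proper coatom, $\tau(z_i) \in \{z_i,1\}$, and $\tau(z_i) = 1$ would force $\eta(z_i) = \eta(1) = 1$ by (I4), and then $z_i = 1$ by (I1), a contradiction. Hence $\Meet_{i \in I} \tau(z_i) = \Meet_{i \in I} z_i$.

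Now set $c := \eta x \join \Meet_{i \in I} z_i$. Then $\eta x \leq c$ and $\Meet \tau(z_i) = \Meet z_i \leq c \leq \tau(c)$, so the hypotheses of (I9) are satisfied. Property (I9) then delivers
\[
\eta(\eta x \join \Meet_{i \in I} \tau(x \meet z_i)) \leq c,
\]
whose left-hand side equals $\eta(\eta x \join a)$ by the first step. Since $\eta x \nleq a$ and $a$ is a coatom, $\eta x \join a = 1$, so this expression reduces to $\eta(1) = 1$ by (I4). Therefore $c = 1$, which is exactly the desired equality $\eta x \join \Meet_{i \in I} z_i = 1$.

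The argument is a clean one-shot application of (I9); the only subtle point is the identification $\tau(z_i) = z_i$, which is what lets us recover the original meet $\Meet z_i$ in the conclusion rather than the possibly larger $\Meet \tau(z_i)$.
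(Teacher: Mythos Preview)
Your proof is correct and essentially identical to the paper's: both use Lemma~\ref{june2} to get $\tau(x\meet z_i)=a$ and $\eta x\nleq a$, then feed this into (I9). The paper invokes the two-variable consequence $(I9')$ directly, obtaining $1=\eta(\eta x\join a)\leq\eta x\join\Meet\tau(z_i)$, whereas you instantiate (I9) with $c=\eta x\join\Meet z_i$; since that choice of $c$ is exactly how $(I9')$ is derived from (I9), the two arguments are the same up to unrolling. Your explicit verification that $\tau(z_i)=z_i$ for coatoms is a step the paper leaves implicit.
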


\begin{proof}
By Lemma~\ref{june2}, we have $\tau(x \meet z_i) = a$ for every $i$,
and $\eta x \nleq a$.  Hence $\eta x \join \Meet \tau (x \meet z_i) = 1$.
Then property $(I9')$ gives the conclusion immediately.
\end{proof}


\begin{thm} \label{june6}
Let\/ $\L$ be a lattice with an equa-interior operator satisfying
property (I9).  Let $x$, $a_i$ and $z_i$ be coatoms of\/ $\L$ with
$x \meet z_i \leq a_i$ properly for all $i \in I$.
If\/ $\Meet_{i \in I} a_i \nleq x$, then $\Meet_{i \in I} z_i \nleq x$.
\end{thm}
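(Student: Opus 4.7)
The plan is to prove the contrapositive: assume $\Meet_{i \in I} z_i \leq x$ and deduce $\Meet_{i \in I} a_i \leq x$. First I record two preliminary observations. For any coatom $c$ of $\L$, $\tau(c) \in \{c,1\}$; if $\tau(c)=1$ then $\eta(c) = \eta(\tau(c)) = \eta(1) = 1$ by the equa-partition and (I4), which together with (I1) forces $c = 1$, a contradiction. Hence $\tau(x)=x$ and $\tau(z_i)=z_i$ for every $i \in I$. Second, Lemma~\ref{june2} extracts $\tau(x \meet z_i) = a_i$ from the proper inequality $x \meet z_i \leq a_i$.

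Next I apply property (I9) with $\chi := x$, $\zeta_i := z_i$, and $c := x$. The hypotheses $\eta(x) \leq x$ and $\Meet_{i} \tau(z_i) = \Meet_{i} z_i \leq x = \tau(x)$ both hold (the second by the assumption $\Meet_i z_i \leq x$), so the conclusion, after substituting $\tau(x \meet z_i) = a_i$, reads
\[
\eta\bigl(\eta(x) \join \Meet_{i} a_i\bigr) \leq x.
\]

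Finally, set $y := \eta(x) \join \Meet_{i} a_i$. Since $y \geq \eta(x)$ and $\eta$ is order-preserving, $\eta(y) \geq \eta(\eta(x)) = \eta(x)$, while the preceding display gives $\eta(y) \leq x = \tau(x)$. Thus $\eta(y)$ lies in the equa-partition class $[\eta(x),\tau(x)]$ of $x$, so its $\eta$-image equals $\eta(x)$; combined with idempotence (I3) this forces $\eta(y) = \eta(x)$, placing $y$ itself in the class $[\eta(x),x]$. Therefore $\Meet_{i} a_i \leq y \leq x$, completing the argument. The conceptual nub, and the only subtle point, is that for a coatom $x$ the equa-partition class collapses to $[\eta(x),x]$, so the bound $\eta(y) \leq x$ delivered by (I9) can be upgraded to the bound $y \leq x$ we actually need.
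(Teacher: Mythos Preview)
Your proof is correct and follows the same approach as the paper: invoke Lemma~\ref{june2} to get $\tau(x \meet z_i)=a_i$, then apply (I9) with $c=x$ in the contrapositive. The paper's proof is a two-line sketch that leaves implicit exactly the details you spell out, namely that $\tau$ fixes coatoms and that the bound $\eta(y)\leq x$ upgrades to $y\leq x$ via the equa-partition.
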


\begin{proof}
Again, by Lemma~\ref{june2}, we have $\tau(x \meet z_i) = a_i$ for every $i$.
Now apply (I9) directly with $c=x$. 
\end{proof}


Let us now use these results to show that certain coatomistic lattices
are not lattices of quasi-equational theories.
Call an infinite ($\meet$)-semilattice $\M$ \emph{cute} if it has an
element $a$ and different elements $m, m_j \in M\backslash\{a\}$, 
$j \in \omega$, with $m \wedge m_j=a$.

Examples of cute semilattices are $\M_\infty$: countably many $m_i$ covering 
the least element $a$, or $\M_2$:  a chain $\{m_j, j \in \omega\}$ in addition 
to elements $m,a$,  satisfying $m \wedge m_j = a$ for all $j$. It was asked 
in \cite{Ad} (p.~175), in connection with the hypothesis about the atomistic 
$Q$-lattices mentioned above in the dual form, whether
$\op{Sub}_f\,\M_\infty$ is a $Q$-lattice. The following result,
an immediate application of Theorem~\ref{june5}, answers this 
question in the negative.
   
\begin{thm}\label{cute}
If\/ $\M$ is a cute semilattice, then the dual of\/ $\op{Sub}_f\,\M$ is not 
representable as $\op{Con}(\S,+,0,\msc F)$.
Hence $\op{Sub}_f\,\M$ is not a $Q$-lattice.
\end{thm}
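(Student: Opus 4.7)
The plan is to apply Theorem~\ref{june5} to the dual lattice $\L := (\op{Sub}_f\,\M)^d$ to derive a contradiction. Suppose $\L \cong \op{Con}(\S, +, 0, \msc F)$ for some semilattice $\S$ with operators. Then $\L$ carries the natural equa-interior operator $\eta$, which by Theorem~\ref{equaint} satisfies (I1)--(I7) and by Theorem~\ref{twelve} also satisfies (I9); so all the hypotheses of Theorem~\ref{june5} are in force.

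Use cuteness to fix $\alpha \in M$ and pairwise distinct $m, m_1, m_2, \dots$ in $M \setminus \{\alpha\}$ with $m \wedge m_j = \alpha$ for every $j$. Write $[y]$ for the atom of $\op{Sub}_f\,\M$ corresponding to $y \in M$, so each $[y]$ is a coatom of $\L$. Put $x = [m]$, $a = [\alpha]$, and $z_j = [m_j]$; these are pairwise distinct coatoms of $\L$ because $m, m_j, \alpha$ are pairwise distinct in $M$. The meet $x \meet z_j$ in $\L$ corresponds in $\op{Sub}_f\,\M$ to the join of $[m]$ and $[m_j]$, i.e.\ the subsemilattice generated by $\{m, m_j\}$, which necessarily contains $\alpha = m \wedge m_j$. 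Thus $[\alpha]$ is properly contained in this generated subsemilattice (since $m \neq \alpha$), which dually gives $x \meet z_j <_\L a$. Theorem~\ref{june5} therefore yields $\eta(x) \join \Meet_{j \in \omega} z_j = 1_\L$.

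To contradict this, compute $\Meet_{j \in \omega} z_j$: in $\op{Sub}_f\,\M$ it is the join of the infinitely many atoms $[m_j]$, i.e.\ the subsemilattice of $\M$ generated by $\{m_j : j \in \omega\}$. Because the $m_j$ are distinct, this subsemilattice is infinite and so cannot be one of the finite subsemilattices in $\op{Sub}_f\,\M$; the only candidate left is the artificial top $\M$. Dualizing, $\Meet_{j \in \omega} z_j = 0_\L$, and hence $\eta(x) \join \Meet_{j \in \omega} z_j = \eta(x) \leq x < 1_\L$, a contradiction. Consequently $\L$ is not the congruence lattice of any semilattice with operators, and Theorem~\ref{rep2} then forbids $\op{Sub}_f\,\M$ from being a $Q$-lattice.

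The main technical care lies in the order reversal between $\L$ and $\op{Sub}_f\,\M$: meets in $\L$ are subsemilattice-generating joins in $\op{Sub}_f\,\M$, and the top element $\M$ artificially added to $\op{Sub}_f\,\M$ becomes the bottom $0_\L$, which is exactly what forces the infinite meet $\Meet_j z_j$ to collapse all the way to $0_\L$. Beyond this bookkeeping, cuteness is used only to produce three pairwise distinct coatoms related by $m \wedge m_j = \alpha$, together with the infinitude of the family $\{m_j\}$, and the argument reduces to a direct invocation of Theorem~\ref{june5}.
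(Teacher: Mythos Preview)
Your proof is correct and takes essentially the same approach as the paper, which simply declares the result an immediate application of Theorem~\ref{june5}. You have supplied exactly the details that application requires: identifying the coatoms $x=[m]$, $a=[\alpha]$, $z_j=[m_j]$ in $\L$, verifying $x\meet z_j \leq a$ properly via the dual join in $\op{Sub}_f\,\M$, and observing that $\Meet_j z_j = 0_\L$ because the generated subsemilattice is infinite.
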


It would be desirable to extend Theorem \ref{cute} to all lattices from 
$\mathcal{M}$.  In particular, we may ask about possibility to represent 
$\L=(\op{Sub}_f\,\mathbf{P_1})^d$, where the semilattice $\mathbf{P_1}$ 
consists of two descending chains 
$\{b_i, i \in \omega\}$, $\{a_i,i \in \omega\}$ with 
defining relations $a_{i+1}=a_i \meet b_{i+1}$, $b_0 > a_0$.

Every equa-interior operator $\eta$ on $\L$ would satisfy: 
$\eta(\{a_i\})=[a_i,b_0]$, $\eta(\{b_i\})\geq [b_i,b_0]$. In particular, 
$\eta(c)=0$, $c \in \L$, implies $c=0$ (equivalently, $\tau(0)=0$). This 
makes $\mathbf{P_1}$ drastically different from cute semilattices.
\emph{Is the dual of\/ $\op{Sub}_f\,\mathbf{P_1}$ representable as 
$\op{Con}(\S,+,0,\msc F)$?}

Another interesting case to consider would be $\op{Sub}_f\,\C$ where
$\C$ is an infinite chain, so that every finite subset of $C$ is a 
subsemilattice.

\section{Appendix I: Complete sublattices of subalgebras}

In the first two appendices, we analyze conditions that were used in
older descriptions of lattices of quasivarieties; see Gorbunov~\cite{VG2}.

Note that $\op{Con}(\S,+,0,\msc F)$ is a complete sublattice of 
$\op{Con}(\S,+,0)$,
which is dually isomorphic to $\op{Sp}(\msc I(\S))$, which is the lattice
of subalgebras of an infinitary algebra.  (Joins of non-directed sets can
be set to $1$.)  In this context we are considering complete sublattices
of $\op{Sub}\,\A$ where $\A$ is a semilattice, or a complete semilattice,
or a complete algebra of algebraic subsets.  

Let $\vare$ be a binary relation on a set $S$.  A subset $X \subseteq S$ 
is said to be \emph{$\vare$-closed} if $c \in X$ and $c \,\vare\, d$ 
implies $d \in X$.

Recall that a quasi-order $\vare$ on a semilattice $\S = \la S,\meet,1 \ra$
is \emph{distributive} if it satisfies the following conditions.
\begin{enumerate}
\item If $c_1 \meet c_2 \,\vare\, d$ then there exist elements $d_1$, $d_2$
such that $c_i \,\vare\, d_i$ and $d = d_1 \meet d_2$.
\item If $1 \,\vare\, d$ then $d=1$.
\end{enumerate}

The effect of the next result is that for a semilattice $\S$, \emph{any} 
complete sublattice of $\op{Sub}\,\S$ can be represented as the lattice
of all $\rho$-closed subsemilattices, for some distributive quasi-order $\rho$.

\begin{thm} \label{oldnine}
Let\/ $\S= \la S,\meet,1 \ra$ be a semilattice with $1$, and let $\vare$
be a distributive quasi-order on $\S$.  Then $\op{Sub}\,(\S,\vare)$,
the lattice of all $\vare$-closed subsemilattices (with $1$), is a complete
sublattice of\/ $\op{Sub}\,\S$.

Conversely, let $\T$ be a complete sublattice of\/ $\op{Sub}\,\S$.
Define a relation $\rho$ on $\S$ by $c \,\rho\, d$ if for all $\mbf X \in \T$
we have $c \in X \!\implies\! d \in X$.
Then $\rho$ is a distributive quasi-order, and
$\T$ consists precisely of the $\rho$-closed subsemilattices of\/ $\S$. 
Furthermore, $\rho$ satisfies the following conditions.
\begin{enumerate}
\item [(3)] If $c \,\rho\, d_1$, $d_2$ then $c \,\rho\, d_1 \meet d_2$.
\item [(4)] For all $c \in S$, $c \,\rho\, 1$.
\end{enumerate}

The correspondence between complete sublattices of\/ $\op{Sub}\,\S$
and distributive quasi-orders satisfying (3) and (4) is a dual isomorphism.
\end{thm}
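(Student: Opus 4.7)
My plan is the following. For the forward direction, closure of $\op{Sub}(\S,\vare)$ under intersection is immediate since both conditions---being a subsemilattice with $1$, and being $\vare$-closed---are preserved under intersection. The real content is closure under joins in $\op{Sub}\,\S$. The join of a family $\{X_i\}$ of subsemilattices is the subsemilattice generated by $\bigcup_i X_i$, so a typical element has the form $c = c_1 \meet \cdots \meet c_n$ with each $c_j \in X_{i_j}$. If $c \,\vare\, d$, then iterating distributivity condition~(1) produces $d = d_1 \meet \cdots \meet d_n$ with $c_j \,\vare\, d_j$; by $\vare$-closure of each $X_{i_j}$ we get $d_j \in X_{i_j}$, placing $d$ in the join.

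For the converse, given a complete sublattice $\T \leq \op{Sub}\,\S$ and $c \in S$, let $\mbf U_c = \bigcap \{\mbf X \in \T : c \in X\}$ be the smallest member of $\T$ containing $c$. Directly from the definition of $\rho$, one has $c \,\rho\, d \ifff d \in U_c$. Reflexivity and transitivity of $\rho$, and the additional conditions (3) and (4), are then immediate from the fact that each $U_c$ is a subsemilattice containing $1$. For distributivity~(1), the key point is that $\mbf U_{c_1} \join \mbf U_{c_2}$ computed in $\T$ agrees with the join computed in $\op{Sub}\,\S$, because $\T$ is a complete sublattice; this join contains both $c_1$ and $c_2$, hence contains $c_1 \meet c_2$, whence $\mbf U_{c_1 \meet c_2} \subseteq \mbf U_{c_1} \join \mbf U_{c_2}$. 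Since the latter is the subsemilattice generated by $U_{c_1} \cup U_{c_2}$, any $d \in U_{c_1 \meet c_2}$ is a finite meet of elements from that union; grouping these and padding an empty group with $1 \in U_{c_i}$ gives $d = d_1 \meet d_2$ with $d_i \in U_{c_i}$, the required decomposition. Condition~(2) follows because the bottom $\{1\}$ of $\op{Sub}\,\S$ lies in $\T$, forcing $U_1 = \{1\}$.

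To identify $\T$ with the lattice of $\rho$-closed subsemilattices, note that every $\mbf X \in \T$ is $\rho$-closed by the very definition of $\rho$. Conversely, any $\rho$-closed subsemilattice $X$ satisfies $X = \bigcup_{c \in X} U_c$ (each $U_c \subseteq X$ by $\rho$-closure, and $c \in U_c$); since $X$ is closed under $\meet$, this union coincides with $\Join_{c \in X} \mbf U_c$ computed in $\op{Sub}\,\S$, which lies in $\T$ by the complete-sublattice hypothesis. For the dual isomorphism, the round-trip $\vare \mapsto \T \mapsto \rho$ recovers $\vare$: if $c \not\vare d$, then $Y = \{y \in S : c \,\vare\, y\}$ is a $\vare$-closed subsemilattice containing $c$ but not $d$, using (3) for $\meet$-closure, (4) to see $1 \in Y$, and transitivity for $\vare$-closure. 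The other round-trip is the characterization of $\T$ just proved, and inclusion-reversal is visible from the definitions.

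The main obstacle is verifying distributivity condition~(1) for $\rho$. The essential ingredient is to identify $\mbf U_c$ as the smallest $\T$-member containing $c$ and to use that joins in $\T$ coincide with subsemilattice-generated joins; once this is in hand, everything else is bookkeeping.
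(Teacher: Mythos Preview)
Your proposal is correct and supplies exactly the details the paper omits: the paper records no argument beyond ``The proof is relatively straightforward,'' and your approach via the generators $\mbf U_c = \bigcap\{\mbf X \in \T : c \in X\}$ together with the fact that joins in $\T$ coincide with joins in $\op{Sub}\,\S$ is the natural way to verify distributivity condition~(1) for $\rho$ and to recover $\T$ as the $\rho$-closed subsemilattices. One small point worth making explicit is that condition~(2) on $\vare$ is what guarantees the bottom element $\{1\}$ of $\op{Sub}\,\S$ lies in $\op{Sub}(\S,\vare)$, so that the empty join is preserved; you use~(2) in the converse but its role in the forward direction is left implicit.
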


The proof is relatively straightforward.


The description of all complete sublattices of $\op{Sub}\,\S$, 
the lattice of all complete subsemilattices of a complete semilattice $\S$,
is almost identical, except that complete meets appear in the conditions.
\begin{enumerate}
\item[(1)${}'$] If $\Meet c_i \,\vare\, d$ then there exist elements $d_i$
such that $c_i \,\vare\, d_i$ and $d = \Meet d_i$.
\item[(3)${}'$] If $c \,\vare\, c_i$ for all $i$, then
$c \,\vare\, \Meet c_i$.
\end{enumerate}
Complete semilattices satisfying (1)${}'$ are called \emph{Brouwerian} by
Gorbunov~\cite{VG2}.  The results can be summarized thusly.

\begin{thm} \label{CSLnine}
Let\/ $\S= \la S,\meet,1 \ra$ be a complete semilattice.
Then there is a dual isomorphism between complete sublattices
of\/ $\op{Sub}\S$ and quasi-orders satisfying conditions (1)${}'$, (2),
(3)${}'$ and (4).
\end{thm}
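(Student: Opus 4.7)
The plan is to mirror the proof of Theorem~\ref{oldnine}, replacing binary meets by arbitrary meets throughout. The same Galois-style correspondence is at play: to a quasi-order $\vare$ on $\S$ satisfying (1)${}'$, (2), (3)${}'$, (4), assign the family $\T_\vare$ of all $\vare$-closed complete subsemilattices of $\S$; to a complete sublattice $\T$ of $\op{Sub}\,\S$, assign the relation $\rho_\T$ defined by $c\,\rho_\T\,d$ iff $c\in X\Rightarrow d\in X$ for every $\mbf X\in\T$. I would check in four stages that these operations are mutually inverse order-reversing bijections.

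First I would verify that $\T_\vare$ is a complete sublattice of $\op{Sub}\,\S$. Closure under arbitrary intersections is immediate. For joins, the join of a family $\{X_k\}$ in $\op{Sub}\,\S$ is the smallest complete subsemilattice containing $\bigcup_k X_k$, whose elements are precisely the arbitrary meets $\Meet_j y_j$ with each $y_j\in\bigcup_k X_k$ (together with $1=\Meet\emptyset$). Given $\Meet_j y_j\,\vare\,d$, condition (1)${}'$ produces $d_j$ with $y_j\,\vare\,d_j$ and $d=\Meet_j d_j$; since each $X_k$ is $\vare$-closed, every $d_j$ still lies in $\bigcup_k X_k$, so $d$ lies in the join. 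This is the only place in the forward direction where the Brouwerian hypothesis (1)${}'$ is really needed.

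Next I would check that $\rho_\T$ is a quasi-order satisfying (1)${}'$, (2), (3)${}'$, (4). Reflexivity and transitivity are formal. Property (3)${}'$ holds because every $\mbf X\in\T$ is closed under arbitrary meets; (4) holds because $1$ lies in every complete subsemilattice; and (2) uses that a complete sublattice of $\op{Sub}\,\S$ contains the bottom element $\{1\}$. The substantive point is (1)${}'$: suppose $\Meet_i c_i\,\rho_\T\,d$. For each $i$ set $X_{c_i}=\bigcap\{\mbf X\in\T:c_i\in X\}\in\T$, so that $X_{c_i}=\{e:c_i\,\rho_\T\,e\}$, and let $Y=\bigvee_i X_{c_i}$ (the join in $\T$, equal to the join in $\op{Sub}\,\S$). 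Then $\Meet_i c_i\in Y$, whence $d\in Y$; write $d=\Meet_{j\in J}y_j$ with $y_j\in X_{c_{i(j)}}$, partition $J$ as $J_i=\{j:i(j)=i\}$, and set $d_i=\Meet_{j\in J_i}y_j$ (interpreting $d_i=1$ when $J_i=\emptyset$). Then $d=\Meet_i d_i$, and the already-verified (3)${}'$ and (4) give $c_i\,\rho_\T\,d_i$.

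Finally I would verify the two round trips. For $\T=\T_{\rho_\T}$: every $\mbf X\in\T$ is $\rho_\T$-closed by definition of $\rho_\T$, and any $\rho_\T$-closed complete subsemilattice $Y$ satisfies $Y=\bigvee_{c\in Y}X_c$ in $\op{Sub}\,\S$ (since $X_c\subseteq Y$ by $\rho_\T$-closure, while $c\in X_c$), hence $Y\in\T$. For $\vare=\rho_{\T_\vare}$: the set $\{e:c\,\vare\,e\}$ is a $\vare$-closed complete subsemilattice (using (3)${}'$ for meet-closure and (4) for $1$-closure) to which $c$ belongs, so $c\,\rho_{\T_\vare}\,d$ forces $c\,\vare\,d$; the reverse inclusion is direct from the definition of $\vare$-closedness. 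Order-reversal is clear from the definitions. The main technical point is the disassembly step used for (1)${}'$ and the forward closure under joins, which together express that (1)${}'$ is the precise algebraic content needed to ensure that $\vare$-closure propagates through the arbitrary meets generating joins in $\op{Sub}\,\S$; the remaining verifications parallel the binary case of Theorem~\ref{oldnine} without incident.
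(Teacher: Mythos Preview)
Your proposal is correct and follows precisely the approach the paper intends: the paper does not give a separate proof of this theorem, noting only that the argument is ``almost identical'' to that of Theorem~\ref{oldnine} with complete meets replacing binary meets, and your write-up supplies exactly those details. The key verifications---closure of $\T_\vare$ under joins via the Brouwerian condition (1)${}'$, and the decomposition argument establishing (1)${}'$ for $\rho_\T$ using the principal closed sets $X_{c_i}$---are carried out correctly.
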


For complete sublattices of $\op{Sp}(\A)$, the lattice of algebraic
subsets of an algebraic lattice $\A$, we must also deal with joins
of nonempty up-directed subsets, and once $\A$ fails the ACC matters
get more complicated.  A quasi-order $\vare$ on $\A$ is said to be
\emph{continuous} if it has the following property.
\begin{enumerate}
\item[(5)] If $C$ is a directed set and $\Join C \,\vare\, d$, then
there exists a directed set $D$ such that $d = \Join D$ and for each
$d \in D$ there exists $c \in C$ with $c \,\vare\, d$.  
\end{enumerate}
This is a very slight weakening of Gorbunov's definition \cite{VG2}.  
As above, we have this result of Gorbunov.

\begin{thm}\label{alg}
Let $\vare$ be a continuous Brouwerian quasi-order on a complete lattice
$\A$.  Then $\op{Sp}(\A)$, the lattice of $\vare$-closed algebraic subsets,
is a complete sublattice of $\op{Sp}(\A)$.
\end{thm}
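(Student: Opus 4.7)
The plan is to verify the two defining requirements of a complete sublattice, namely closure of the family of $\vare$-closed algebraic subsets under arbitrary intersections (which gives meets in $\op{Sp}(\A)$) and under the joins computed in $\op{Sp}(\A)$. The first is immediate: an arbitrary intersection of algebraic subsets is algebraic, and an arbitrary intersection of $\vare$-closed sets is plainly $\vare$-closed. So the substantive claim is that the join of a family $\{S_i\}_{i \in I}$ of $\vare$-closed algebraic subsets, computed in $\op{Sp}(\A)$, is again $\vare$-closed.

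Recall that the join in $\op{Sp}(\A)$ of $\{S_i\}$ is the smallest algebraic subset containing $\bigcup_i S_i$, which can be built transfinitely: set $T_0 = \bigcup_i S_i$, let $T_{\alpha+1}$ consist of $T_\alpha$ together with all arbitrary meets and all nonempty up-directed joins of subsets of $T_\alpha$, and take unions at limits. The process stabilizes at some $T_\mu$ equal to the join. The key step is to show by transfinite induction that each $T_\alpha$ is $\vare$-closed. The base case $T_0$ is a union of $\vare$-closed sets, hence $\vare$-closed. The limit case is trivial.

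For the successor case, suppose $a \in T_{\alpha+1}$ and $a \,\vare\, b$. If $a = \Meet_i c_i$ with all $c_i \in T_\alpha$, then the Brouwerian condition (1)${}'$ produces $d_i$ with $c_i \,\vare\, d_i$ and $b = \Meet_i d_i$; by induction each $d_i \in T_\alpha$, so $b \in T_{\alpha+1}$. The empty meet $a=1$ is handled by condition (2), which forces $b=1$. If instead $a = \Join C$ for a directed $C \subseteq T_\alpha$, the continuity condition (5) gives a directed $D$ with $b = \Join D$ and, for each $d \in D$, some $c \in C$ with $c \,\vare\, d$; by induction each such $d$ lies in $T_\alpha$, so $b \in T_{\alpha+1}$. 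Hence $T_{\alpha+1}$ is $\vare$-closed, completing the induction, and the join is $\vare$-closed.

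The only real obstacle is the bookkeeping of the transfinite construction and the careful matching of (1)${}'$ with the meet step and (5) with the directed-join step; neither invocation is deep, but one must be vigilant that the $d_i$'s produced by (1)${}'$ or the elements of $D$ produced by (5) lie at the \emph{previous} stage, which is exactly what the hypothesis of $\vare$-closure of $T_\alpha$ delivers. Condition (2) is slightly hidden, guaranteeing that the top element $1 \in T_0$ does not force anything outside the union into the set when propagating $\vare$.
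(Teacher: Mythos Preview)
The paper does not actually supply a proof of this theorem; it merely states the result and attributes it to Gorbunov (``As above, we have this result of Gorbunov''), so there is no proof in the paper to compare against. Your argument is correct and is the standard one: meets in $\op{Sp}(\A)$ are intersections, which trivially preserve $\vare$-closure, while for joins the transfinite closure of $\bigcup_i S_i$ under arbitrary meets and nonempty directed joins does compute the join in $\op{Sp}(\A)$, and the Brouwerian condition $(1)'$ together with continuity condition~(5) are exactly what is needed to push $\vare$-closure through the successor step of the induction. Your handling of the case $a=1$ via condition~(2) is fine, though in fact $1$ already lies in $T_0$ since each $S_i$, being algebraic, contains the empty meet.
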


Now for any algebra $\B$ we can define the \emph{embedding relation} $E$
on $\op{Con}\,\B$ by $\theta \,E\, \psi$ if $\B/\psi \leq \B/\theta$.
A fundamental result of Gorbunov characterizes $Q$-lattices in terms of
the embedding relations (Corollaries~5.2.2 and~5.6.8 of \cite{VG2}).

\begin{thm}\label{fund}
Let $\msc K$ be a quasivariety and let $\F = \F_{\msc K}(\omega)$.
The embedding relation is a continuous Brouwerian quasi-order on
$\op{Con}_{\msc K}\,\F$, and
$\L_q(\msc K) \cong \op{Sp}(\op{Con}_{\msc K}(\F,E))$.
\end{thm}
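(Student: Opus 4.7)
The plan is to verify three properties of the embedding relation $E$ on $\op{Con}_{\msc K}\,\F$: that it is a quasi-order, that it is Brouwerian (condition (1)'), and that it is continuous (condition (5)). The isomorphism $\L_q(\msc K) \cong \op{Sp}(\op{Con}_{\msc K}\,\F, E)$ will then follow immediately from the Gorbunov-Tumanov characterization theorem already quoted in the paper.

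I would first dispatch reflexivity and transitivity, both of which are immediate: $\F/\theta$ is a substructure of itself, and a composition of embeddings is an embedding. The key tool for the other two properties is a single lifting construction. Given any $\msc K$-congruence $\lambda$ and any embedding $\iota : \F/\psi \hookrightarrow \F/\lambda$, the freeness of $\F = \F_{\msc K}(\omega)$ lets me choose, on a free generating set $\{x_n\}$, elements $\vare(x_n) \in \F$ with $\pi_\lambda(\vare(x_n)) = \iota([x_n]_\psi)$ (using surjectivity of $\pi_\lambda$), and extend to an endomorphism $\vare : \F \to \F$. Then $\pi_\lambda \circ \vare = \iota \circ \pi_\psi$, and because $\iota$ is a structure embedding (reflecting equations and each relation), the kernel of this composition is exactly $\psi$; equivalently, the componentwise preimage $\vare^{-1}(\lambda)$ equals $\psi$.

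For the Brouwerian condition, I would start from $\Meet \theta_i \,E\, \psi$, fix an embedding $\iota : \F/\psi \hookrightarrow \F/\Meet\theta_i$, lift it to $\vare$ as above, and set $\psi_i = \vare^{-1}(\theta_i)$. Then $\vare$ induces an embedding $\F/\psi_i \hookrightarrow \F/\theta_i$, so $\F/\psi_i$ lies in $\msc K$ by substructure closure and $\theta_i \,E\, \psi_i$; moreover, componentwise preimage commutes with intersection, so $\Meet \psi_i = \vare^{-1}(\Meet \theta_i) = \psi$. The continuity condition (5) goes through in essentially the same way: given a directed family $\{\theta_\alpha\}$ with $\Join \theta_\alpha \,E\, \psi$, I would use that directed joins in $\op{Con}_{\msc K}\,\F$ are set-theoretic unions (since Horn formulae are preserved by directed unions), lift the given embedding, set $\psi_\alpha = \vare^{-1}(\theta_\alpha)$, and observe that preimage commutes with directed unions, giving the required directed family with $\Join \psi_\alpha = \psi$ and $\theta_\alpha \,E\, \psi_\alpha$ for every $\alpha$.

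The one real technical obstacle is the bookkeeping forced by the structure setting, where a $\msc K$-congruence is a pair $(\theta_0, \theta_1)$ with a relational component. I need to define $\vare^{-1}(\theta)$ componentwise, with $\ol s \in \vare^{-1}(\theta_1^R)$ iff $\vare(\ol s) \in \theta_1^R$, and then verify that the induced map $\F/\vare^{-1}(\theta) \to \F/\theta$ both preserves and reflects each relation $R$, so that it genuinely is a structure embedding. Once this uniform check is in place, the remaining verifications are routine manipulations of preimages of congruences under endomorphisms.
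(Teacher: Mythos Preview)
The paper does not supply its own proof of this theorem; it is stated as ``a fundamental result of Gorbunov'' and cited to Corollaries~5.2.2 and~5.6.8 of \cite{VG2}, with the isomorphism already quoted earlier as the Gorbunov--Tumanov characterization theorem. So there is nothing in the paper to compare your argument against directly.

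That said, your proposal is correct and follows the standard line. The lifting construction --- choosing an endomorphism $\vare$ of $\F$ so that $\pi_\lambda \circ \vare = \iota \circ \pi_\psi$, and then pulling back congruences along $\vare$ --- is exactly the device Gorbunov uses, and it simultaneously handles both the Brouwerian condition (via commutation of $\vare^{-1}$ with arbitrary intersections) and continuity (via commutation with directed unions, together with the fact that directed joins in $\op{Con}_{\msc K}\,\F$ are set-theoretic unions). Your attention to the relational component of congruences is appropriate for the structure setting and is the only place where extra care is needed. You might also note in passing that condition~(2), namely $1 \,E\, \psi$ implies $\psi = 1$, holds because any embedding into the one-element structure with all relations full forces the domain to be one-element with all relations full; this is trivial but part of the definition of a Brouwerian quasi-order.
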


For comparison, we note that the isomorphism relation need not be
continuous; see Gorbunov \cite{VG2}, Example~5.6.6.

We do not know (and doubt) that the relation $\rho$ corresponding 
to a complete sublattice of $\op{Sp}(\A)$ need always be continuous.  
However, our representation of $\op{Con}(\S,+,0,\msc F)$ as dually 
isomorphic to a complete sublattice of $\op{Sp}(\msc I(\S))$ could be
unraveled to give the $\rho$ relation explicitly in that case.
\emph{Are these particular relations always continuous?} 

\section{Appendix II: Filterability and equaclosure operators}

The natural equational closure operator on $L_q(\msc K)$ is given 
by the map $h(\msc Q) = \op{H}(\msc Q) \cap \msc K$ for quasivarieties
$\msc Q \subseteq \msc K$.
That is, $h(\msc Q)$ consists of all members of $\msc K$ that are in
the variety generated by $\msc Q$, or equivalently, that are homomorphic
images of $\F_{\msc Q}(X)$ for some set $X$.  
For the corresponding map on $\op{Sp}(\op{Con}\,\F_{\msc K}(\omega))$, 
let $X$ be the algebraic subset of all $\msc Q$-congruences of
$\op{Con}\,\F_{\msc K}(\omega)$.  Then $\varphi = \Meet X$ is the 
natural congruence with $\F/\varphi \cong \F_{\msc Q}(\omega)$, and 
the filter $\fil \varphi$ is the algebraic subset associated with $h(\msc Q)$,
that is, all $h(\msc Q)$-congruences of $\op{Con}\,\F_{\msc K}(\omega)$.

Abstractly, let $\vare$ be a distributive quasi-order on an algebraic lattice
$\A$.  Then it is not hard to see that
the map $h(X)=\fil \Meet X$ on $\op{Sp}(\A,\vare)$
will satisfy the duals of conditions (I1)--(I7) so long as 
$\fil \Meet X$ is $\vare$-closed for every $X \in \op{Sp}(\A,\vare)$.
A quasi-order that satisfies this crucial condition,
\[ c \geq \Meet X \ \&\  c \,\vare\, d \!\implies\! d \geq \Meet X \]
is said to be \emph{filterable}.
If the quasi-order $\vare$ is filterable, then the closure operator 
$h(X)=\fil \Meet X$ on $\op{Sp}(\A,\vare)$ is again called the \emph{natural} 
closure operator determined by $\vare$.
We can also speak of a complete sublattice of $\op{Sp}(\A)$ as being
filterable if the quasi-order it induces \emph{via} Theorem~\ref{oldnine} 
is so.

Dually, a sublattice $\T \leq \op{Con}(\S,+,0)$ is filterable if,
for each $\theta \in \T$, the semilattice congruence generated by 
the $0$-class of $\theta$ is in $\T$.  
As we have observed, this is the case when $\T = \op{Con}(\S,+,0,\msc F)$
for some set of operators $\msc F$.
Thus we obtain a slightly different perspective on Theorem~\ref{equaint}.

\begin{thm} \label{filterable}
For a semilattice $\S$ with operators, $\T = \op{Con}(\S,+,0,\msc F)$
is a filterable complete sublattice of\/ $\op{Con}(\S,+,0)$. Thus $\T$ 
supports the natural interior operator $h(\theta)=\op{con}(0/\theta)$, 
which satisfies conditions (I1)--(I7).
\end{thm}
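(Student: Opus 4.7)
The substantive content beyond Theorem~\ref{equaint} is showing that $\T = \op{Con}(\S,+,0,\msc F)$ embeds as a complete sublattice of $\op{Con}(\S,+,0)$ and is filterable with respect to this embedding; the claim about (I1)--(I7) will then follow by identifying the filterability operator $h$ with the map $\eta$ of Theorem~\ref{equaint}. So my plan is to verify these three items in turn.

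Closure of $\T$ under arbitrary intersections is immediate, since $\msc F$-compatibility is defined by a universal condition preserved under conjunction. For closure under joins, I would observe that a pair $(x,y)$ in the join $\Join_i \theta_i$ computed in $\op{Con}(\S,+,0)$ is witnessed by a finite zig-zag $x = a_0 \,\theta_{i_1}\, a_1 \,\theta_{i_2}\, \cdots \,\theta_{i_n}\, a_n = y$ arising from the usual closure under symmetry, transitivity, and $(+)$-compatibility. Applying any $f \in \msc F$ term-by-term yields $f(x) = f(a_0) \,\theta_{i_1}\, \cdots \,\theta_{i_n}\, f(a_n) = f(y)$, each step legitimate because every $\theta_{i_k}$ is already $\msc F$-compatible. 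Hence $\Join_i \theta_i \in \T$, and $\T$ is a complete sublattice.

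For filterability, fix $\theta \in \T$ and set $I = 0/\theta$. By Theorem~\ref{basic}(1) the ideal $I$ is $\msc F$-closed, and by Theorem~\ref{basic}(2) the semilattice congruence $\eta(I)$ generated by $I$ satisfies $x\,\eta(I)\,y$ iff $x+i=y+i$ for some $i \in I$. To place $\eta(I)$ in $\T$ I only need $\msc F$-compatibility: if $x+i=y+i$ with $i \in I$ and $f \in \msc F$, then $f(x)+f(i) = f(y)+f(i)$ because $f$ is a $(\join,0)$-homomorphism, and $f(i) \in I$ by $\msc F$-closure of $I$, so $f(x)\,\eta(I)\,f(y)$. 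Thus $h(\theta) = \op{con}(0/\theta) = \eta(I) \in \T$, which is precisely filterability. Since this $h$ agrees pointwise with the $\eta$ of Theorem~\ref{equaint}, and since $\T$ is a complete sublattice so that the joins and meets appearing in (I1)--(I7) are computed identically in $\T$ and in $\op{Con}(\S,+,0)$, all seven properties transfer directly from Theorem~\ref{equaint}. The main step requiring genuine care is the zig-zag argument in the join-closure verification, but even that is essentially a one-line consequence of the $(\join,0)$-homomorphism property of each $f \in \msc F$; the rest is bookkeeping.
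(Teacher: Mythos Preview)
Your proof is correct and matches the paper's approach: the paper presents this theorem as a direct repackaging of Theorem~\ref{equaint} together with the observations in Theorem~\ref{basic}, remarking only that ``as we have observed'' the semilattice congruence $\eta(I)$ on an $\msc F$-closed ideal is $\msc F$-compatible (this is the parenthetical in the proof of Theorem~\ref{equaint}). You have simply written out the details the paper leaves implicit---the zig-zag argument for join-closure and the verification that $\eta(I)\in\T$---and these are exactly the right ones.
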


In fact, the natural interior operator on $\op{Con}(\S,+,0,\msc F)$
also satisfies condition (I9).
However, as we saw in Section~\ref{eleven}, a filterable sublattice of
$\op{Con}(\S,+,0)$ may fail condition $(\dagger)$, which is the finite 
index case of (I9), even with $\S$ finite.
Thus being a congruence lattice of a semilattice with operators is a
stronger property than just being a filterable sublattice of 
$\op{Con}(\S,+,0)$.

\section{Appendix III: Lattices of equational theories}

In this appendix, we summarize what is known about lattices of equational
theories.  Throughout the section, $\V$ will denote a variety of algebras,
with no relation symbols in the signature.  For this situation, atomic 
theories really are equational theories.  The lattice of equational theories 
is, of course, dual to the lattice of subvarieties of $\V$.

From the basic representation 
$\op{ATh}(\V) \cong \op{Ficon} \, \F_{\V}(\omega)$,
we see that the lattice is algebraic.  Its top element $1$ has the basis
$x \approx y$, and thus $1$ is compact.  On the other hand, J.~Je\v zek
proved that any algebraic lattice with countably many compact elements
is isomorphic to an interval in some lattice of equational theories
\cite{JJ}.  

R. McKenzie showed that every lattice of equational theories is isomorphic
to the congruence lattice of a groupoid with left unit and right zero
\cite{RMcK}.  N.~Newrly refined these ideas, showing that a lattice of
equational theories is isomorphic to the congruence lattice of a monoid
with a right zero and one additional unary operation \cite{NN}.
A.~Nurakunov added a second unary operation and proved a converse:
a lattice is a lattice of equational theories if and only if it is the
congruence lattice of a monoid with a right zero and two unary operations
satisfying certain properties~\cite{AMN}.

Nurakunov's conditions are rather technical, but they just codify the
properties of the natural operations on the free algebra $\F_{\V}(X)$
that they model.  If $X = \{ x_0, x_1, x_2, \dots \}$ and $s$, $t$
are terms, then
\[  s \cdot t = t(s,x_1, x_2, \dots ). \]
The two unary operations are the endomorphism $\varphi_+$ and $\varphi_-$,
where $\varphi_+(x_i)=x_{i+1}$ for all $i$, while $\varphi_-(x_0) = x_0$ 
and $\varphi_-(x_i)=x_{i-1}$ for $i>0$.  

W.A. Lampe used McKenzie's representation to prove that lattices of equational
theories satisfy a form of meet semidistributivity at $1$, the so-called
Zipper Condition \cite{WAL}:
\[ \text{If }a_i \meet c = z \text{ for all }i \in I \text{ and }
\Join_{i \in I} a_i = 1, \text{ then } c=z.   \]
A similar but stronger condition was found by M.~Ern\'e \cite{ME} and 
G.~Tardos (independently), which was refined yet further by Lampe \cite{WAL2}.
These results show that the structure of lattices of equational
theories is quite constrained at the top, whereas Je\v zek's theorem 
shows that this is not the case globally.
Confirming this heuristic, D.~Pigozzi and G.~Tardos proved that every algebraic
lattice with a completely join irreducible greatest element $1$ is isomorphic 
to a lattice of equational theories \cite{PT}.

Again, we propose that one should investigate $\op{ATh}(\V)$ for varieties
of structures.  

The authors would like to thank the referee for many helpful comments.

\end{document}